\documentclass[reqno]{amsart}
\usepackage{enumerate}
\usepackage{amsmath,amssymb,amsthm,mathtools,mathrsfs,enumitem,array,booktabs,nccmath,url,color,dsfont}
\usepackage[left=3.2cm,right=3.2cm,top=4cm,bottom=4cm]{geometry}
\usepackage[colorlinks=true]{hyperref}
\hypersetup{
  linkcolor=blue,
  citecolor=blue,
  filecolor=blue,
  urlcolor=cyan,
  pdftitle={Global well-posedness for two-dimensional gPAM on the whole space},
  pdfauthor={Hao Shen, Rongchan Zhu, and Xiangchan Zhu}
}
\numberwithin{equation}{section}
\allowdisplaybreaks
\newtheorem{theorem}{Theorem}[section]
\newtheorem{lemma}[theorem]{Lemma}
\newtheorem{proposition}[theorem]{Proposition}
\newtheorem{corollary}[theorem]{Corollary}
\theoremstyle{definition}
\newtheorem{definition}[theorem]{Definition}

\newtheorem{remark}[theorem]{Remark}

\newcommand{\la}{\langle}
\newcommand{\ra}{\rangle}
\newcommand{\norm}[1]{\left\lVert #1\right\rVert}

\newcommand{\R}{\mathbb R}

\newcommand{\cL}{\mathcal L}
\newcommand{\cC}{\mathcal C}
\newcommand{\cE}{\mathcal E}
\newcommand{\cR}{\mathcal R}

\newcommand{\sI}{\mathcal I}
\newcommand{\para}{\prec}
\newcommand{\Par}{\succ}
\newcommand{\res}{\circ}

\newcommand{\dlt}{\mathfrak D}

\newcolumntype{L}[1]{>{\raggedright\arraybackslash}p{#1}}

\title[Whole-space gPAM]{Global well-posedness for generalized parabolic Anderson model on the whole plane}

\author{Hao Shen}
\address[H. Shen]{Department of Mathematics, University of Wisconsin - Madison, USA}
\email{pkushenhao@gmail.com}

\author{Rongchan Zhu}
\address[R. Zhu]{Department of Mathematics, Beijing Institute of Technology, Beijing 100081, China}
\email{zhurongchan@126.com}

\author{Xiangchan Zhu}
\address[X. Zhu]{State Key Laboratory of Mathematical Sciences, Academy of Mathematics and Systems Science, Chinese Academy of Sciences, Beijing 100190, China}
\email{zhuxiangchan@126.com}

\subjclass[2020]{{Primary 60H15; Secondary 60L30, 35R60.}}
\keywords{{generalized parabolic Anderson model; singular SPDE; paracontrolled calculus; weighted Besov--H\"older spaces; global well-posedness}}

\date{}

\begin{document}
\begin{abstract}
For every \(0<\kappa<\sqrt{5}-2\), we prove global existence for the two-dimensional generalized parabolic Anderson model on the whole plane $\mathbb R^2$ with nonlinearity $F\in C_b^2(\mathbb R)$, driven by an enhanced noise $(\eta,\Psi)$. The noise $\eta$ has polynomially weighted spatial Besov--H\"older regularity $-1-\kappa$, and $\Psi$ is the corresponding renormalized second-order object. If $F''$ is globally Lipschitz, the solution is unique.

The proof combines a weight-compatible annular high--low decomposition with a paracontrolled transport representation. The final remainder is estimated simultaneously in a weighted $L^\infty$ norm
and in a higher-order weighted parabolic H\"older norm, using two strictly different polynomial weights.
This weight gap absorbs the polynomial losses generated by the enhanced noise, the localization procedure, and the transport coefficient.
Several refinements of earlier work allow the maximum-principle and Schauder estimates to yield a global a priori bound for a larger range of $\kappa$. Uniqueness is proved in a time-dependent exponentially weighted topology.
\end{abstract}
\maketitle

\setcounter{tocdepth}{2}
\tableofcontents

\section{Introduction}

We consider the two-dimensional generalized parabolic Anderson model on the whole plane,
\[
  \mathcal L u = F(u)\eta,
  \qquad \mathcal L=\partial_t-\Delta,
  \qquad x\in\mathbb R^2.
\]
The spatial distribution \(\eta\) is assumed to have regularity \(C^{-1-\kappa}\).  The heat equation then suggests \(u\in C^{1-\kappa}\), so the product \(F(u)\eta\) is not defined by classical multiplication.  As in the theories of regularity structures and paracontrolled distributions, we formulate the equation with respect to an enhanced noise, written as
\[
  \boldsymbol\eta=(\eta,\Psi),
  \qquad \Psi = 
  \sI(\eta)\res\eta,
\]
where the latter expression is understood after renormalization \cite{Hai14,GIP15}.
{Once this enhancement has been prescribed, the equation is treated as a deterministic singular PDE.}

In the linear case,
{the parabolic Anderson model admits global constructions on the whole space} 
\cite{HL2d,HL}. (See also \cite{CFG17} for existence of density using Malliavin calculus and regularity structures.)
For genuinely nonlinear gPAM on compact domains, global-in-time a priori estimates were recently obtained by Chandra, de Lima Feltes and Weber in the regularity-structure framework \cite{CFW24} and, by paracontrolled methods, in \cite{SZZ24}.  Both proofs use a transport representation.

More generally, the finite-volume theory of global solutions for singular SPDEs has developed substantially.  For instance, global a priori estimates and coming down from infinity were obtained for the dynamic $\Phi^4_3$ model on the torus by Mourrat and Weber \cite{MW17b}.  In the direction of singular stochastic fluid equations on compact domains, we refer to the works \cite{HZZ23a,HZZ23b,HR24,HZ25}.  
Moreover, Bringmann and Cao proved global well-posedness for the dynamical sine--Gordon model up to $6\pi$ and for the two-dimensional stochastic Abelian--Higgs equations on the torus \cite{BC26,BC24}.  Thus, on compact spatial domains there is now a substantial body of global theory.
By contrast, in unbounded spatial domains, the
global theory for nonlinear singular SPDEs is much more limited.  Besides the linear whole-space parabolic Anderson model \cite{HL2d,HL}, the whole-space constructions closest to the present work include the two-dimensional dynamic $\Phi^4$ model in the plane of Mourrat and Weber \cite{MW17}, the Euclidean $\Phi^4$ results of Gubinelli and Hofmanov\'{a} \cite{GH18,GH18a}, and the singular HJB/KPZ theory of Zhang, Zhu and Zhu \cite{ZZZ22}.

{The purpose of this paper is to extend the finite-volume global theory for gPAM to the whole plane \(\mathbb R^2\) in polynomially weighted spaces.
We also refine several estimates from \cite{SZZ24}, which improves the admissible range of $\kappa$.}

The additional difficulty on \(\mathbb R^2\) is not only the possible behavior at spatial infinity, but also the fact that the constants in local estimates must remain uniform as the spatial region expands.  On a compact domain one can work in unweighted spaces and the low frequencies have no spatial tail.  On the whole space, by contrast, both the enhanced noise and the solution may have polynomial growth, and products between solution-dependent coefficients and rough objects continuously transfer polynomial weights from one factor to another.  Thus every paraproduct, resonant product, Schauder estimate, and maximum-principle estimate has to keep track of regularity and spatial weight at the same time.  We use
\[
  \rho_D(x)=\langle x\rangle^{-D},
\]
with positive \(D\) corresponding to allowed polynomial growth.

\subsection{Main idea and comparison with previous work}\label{subsec:intro-main-idea}

The finite-volume results for nonlinear gPAM {already contain the basic mechanism, namely a transport representation for the nonlinear term;} see \cite{CFW24,SZZ24}.  {In the present paper the main difficulty is to implement this mechanism on \(\mathbb R^2\), where local estimates must be uniform over expanding spatial regions and where every product carries both regularity and polynomial weight.  The main novelty is therefore a weight-compatible version of the finite-volume transport argument, together with two refinements of the decomposition and of the regularity parameters that enlarge the admissible range of \(\kappa\).}

 First, we prove a weighted annular localization estimate  in Section~\ref{sec:parameters-weights}.  This produces a decomposition, in which the high-frequency component has the prescribed spatial decay needed for weighted products, while the low-frequency component gains regularity with an explicitly controlled polynomial loss. This is in the  spirit of  \cite[Lemma~2.4]{GH18}, but compared with that work, our present formulation separates the regularity increment from the polynomial weight change. This separation is crucial for the transport argument, because the same rough noise component is estimated at several different regularity levels. If every gain in regularity were tied to a comparable loss in polynomial weight, the resulting weight loss would be too large to be absorbed by the maximum-principle estimate; see Remark~\ref{rem:gh-localization-comparison}.

Second, the final remainder is controlled in two different weighted norms.  The maximum-principle estimate is applied with a weaker polynomial weight, whereas the Schauder estimate is applied in a higher regularity norm with a slightly stronger weight.  The gap between the two weights absorbs the finite polynomial losses generated by the enhanced noise, the localization procedure, and the transport coefficient.

Third, we use a refined placement of the second-order enhanced term in the paracontrolled decomposition.  Only the paraproduct part is included in the first paracontrolled equation; the remaining second-order contributions are kept in the forcing term.  Consequently these terms are not differentiated in the transport representation.  This avoids an unnecessary high-norm factor in the estimate of the first paracontrolled component; see Remark~\ref{rem:comparison-szz24-split}.

Another ingredient in improving the admissible range of $\kappa$ is to treat the high-frequency component of the noise $\eta$ in several different regularity regimes, depending on its role in the argument: in commutator estimates, rough resonant products, fixed-cutoff and time-increment estimates, and products involving the high-regularity norm of the remainder.  At the same time, different components of the solution are estimated in different regularity spaces.  After choosing the cutoff levels, the analytic estimates reduce to finitely many scalar inequalities.  In the limiting unweighted calculation, the decisive condition is
\[
  \kappa^2+4\kappa-1<0,
\]
which is equivalent to \(0<\kappa<\sqrt5-2\).  The polynomial weights introduce only perturbative losses and admissibility constraints, which are absorbed by taking the total weight scale sufficiently small; see Remark~\ref{rem:szz24-parameter-comparison}.

The uniqueness argument is also genuinely weighted.  We compare two solutions in a time-dependent exponentially weighted topology following  \cite{HL2d, HL}, while keeping the polynomial weights sufficiently small to absorb the remaining polynomial losses.  An additional high--low decomposition is used in the difference estimates to reserve extra spatial decay for terms without an exponential time gap.

The paper is organized as follows.  Section~\ref{sec:target} states the main result.  Section~\ref{sec:parameters-weights}  constructs the annular high--low decomposition of the enhanced noise.  Section~\ref{sec:model-pde-pc} introduces the cutoff paracontrolled equation and proves an a priori estimate for the solution to the first equation.  Section~\ref{sec:transport-representation} proves the weighted transport representation and weighted maximum principle.  Section~\ref{sec:closure} estimates the forcing terms and proves the a priori bound.  Section~\ref{sec:difference-details} proves uniqueness by a time-dependent exponential-weight argument.

\section*{Acknowledgements}
The authors gratefully acknowledge the assistance of GPT-5.5 Pro in adjusting the parameter and in optimizing the admissible range of $\kappa$, especially the point mentioned in Remark~\ref{rem:szz24-parameter-comparison}.

H.S. gratefully acknowledges financial support from NSF grants DMS-1954091 and CAREER DMS-2044415. X.Z. is grateful for financial support in part from the NSFC (No. 12595281). R.Z. and X.Z. are grateful for financial support from the National Key R\&D Program of China (No. 2022YFA1006300) and from the NSFC (No. 12426205). R.Z. is grateful for financial support from the NSFC (No. 12271030). X.Z. is grateful for financial support in part from the NSFC (No. 12288201), for support from the Key Lab of Random Complex Structures and Data Science, Chinese Academy of Sciences.

\section{Main result}\label{sec:target}

We first introduce the norms used in the paper.
Throughout this section, unless a more restrictive range is explicitly stated, the spatial regularity index satisfies $\beta\in\mathbb R$ and the time-weight exponent satisfies $0\leqslant\gamma<1$.  We write $A\lesssim B$ if $A\leqslant CB$, where the implicit constant may depend on the fixed parameters of the paper but not on the cutoff levels or on the unknown scalar quantities in the a priori estimates.
Let $(\Delta_j)_{j\geqslant-1}$ be the Littlewood-Paley blocks and let
$\rho_D(x)=\la x\ra^{-D}.$
Thus positive values of \(D\) correspond to allowed polynomial growth, whereas weights of the form \(\rho_{-\sigma}\) encode additional decay. We use \(C^\beta(\rho_D)\) and \(\cC_D^\beta\) interchangeably when no ambiguity can arise.
\begin{itemize}
\item
We use the weighted Besov--H\"older norm
\begin{equation}\label{eq:weighted-holder}
  \norm{f}_{C^\beta(\rho_D)}
  :=\sup_{j\geqslant -1}2^{\beta j}\norm{\rho_D\Delta_j f}_{L^\infty}.
\end{equation}
For simplicity of notation we sometimes write
$\norm{f}_{\cC_D^\beta}:=\norm{f}_{C^\beta(\rho_D)}$.
\item
For time-dependent functions define, for any Banach space $E$,
\begin{equation*}
  \norm{f}_{L_T^{\infty,\gamma}E}:=\sup_{0<t\leqslant T}t^\gamma\norm{f(t)}_E.
\end{equation*}
When $E$ corresponds to spatial Besov--H\"older norms we write
\begin{equation*}
  \norm{v}_{C_T^{\beta,\gamma}(\rho_D)}
  :=\sup_{0<t\leqslant T}t^\gamma\norm{v(t)}_{C^\beta(\rho_D)}.
\end{equation*}
\item
We also use the time-H\"older seminorm 
\begin{equation*}
  \norm{v}_{C_T^{\nu,\gamma}L^\infty(\rho_D)}
  :=\sup_{0<s<t\leqslant T}s^\gamma
  \frac{\norm{v(t)-v(s)}_{L^\infty(\rho_D)}}{|t-s|^\nu},
  \qquad 0<\nu<1.
\end{equation*}
\item
The parabolic solution norm is defined by
\begin{equation*}
  \norm{v}_{\mathbb S_T^{\beta,\gamma;\nu}(\rho_D)}
  :=\norm{v}_{C_T^{\beta,\gamma}(\rho_D)}
    +\norm{v}_{C_T^{\nu,\gamma}L^\infty(\rho_D)}.
\end{equation*}
When $\nu$ is omitted  and $0<\beta<2$, it means that we use the parabolic convention
\[
  \mathbb S_T^{\beta,\gamma}(\rho_D)
  :=\mathbb S_T^{\beta,\gamma;\beta/2}(\rho_D).
\]
In general we keep the third index because later we also use these norms with \(\nu\ne\beta/2\).
\end{itemize}

We consider
\begin{equation}\label{eq:gpam}
  \mathcal L u=F(u)\eta,
  \qquad
  \mathcal L=\partial_t-\Delta,
  \qquad
  u(0)=u_0 .
\end{equation}
The a priori estimates are proved under \(F\in C_b^2(\mathbb R)\).  The uniqueness  additionally assumes
\begin{equation}\label{eq:F-lip-hyp}
  F''\text{ is globally Lipschitz.}
\end{equation}
Throughout the paper
\begin{equation*}
  (\mathcal I f)(t):=\int_0^t P_{t-r}f(r)\,\mathrm{d}r,
  \qquad
  P_t=e^{t\Delta}.
\end{equation*}
Thus \(\mathcal L\mathcal I f=f\) and \((\mathcal I f)(0)=0\).

\begin{definition} 	 \label{enhanced}
	For $s_0>0$, we say that  $\eta\in L_T^\infty\cC_{s_0}^{-1-\kappa}$ with $\kappa<1/3$ can be lifted to an enhanced noise if there exist $\eta_n\in L^\infty_TC^\infty$
	such that
	$\eta_n$ converges to $\eta$ in $L_T^\infty \cC_{s_0}^{-1-\kappa}$, and  there are constants $c_n$ and a function $\Psi\in L_T^\infty \cC_{s_0}^{-2\kappa}$ such that
	\begin{align*}
		\lim_{n\to\infty}\|\sI (\eta_n)\circ\eta_n-c_n-\Psi\|_{L_T^\infty \cC_{s_0}^{-2\kappa}}=0\;,
	\end{align*}
	where $\circ$ denotes the resonant product in Bony's decomposition.

\end{definition}


The following is our main theorem. The precise notion of the solution requires additional notation, which we postpone; see Definition~\ref{def:pc-solution}.

\begin{theorem}\label{thm:main}
Let \(0<\kappa<\sqrt{5}-2\). There exists \(\mu_*>0\) such that, for all \(s_0,D_0>0\) with \(D_0+2s_0<\mu_*\), the following holds.
Let $\eta\in L^\infty_T\cC_{s_0}^{-1-\kappa}$ be liftable to an enhanced noise. Assume \(F\in C_b^2(\mathbb R)\), \(u_0\in C^{1-\kappa}(\rho_{D_0})\). Then, on every finite time interval, there exists a global solution to \eqref{eq:gpam} in the sense of Definition~\ref{def:pc-solution}.
If, in addition, \eqref{eq:F-lip-hyp} holds, then the solution is unique.
\end{theorem}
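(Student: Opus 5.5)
The plan follows the outline in Section~\ref{subsec:intro-main-idea}: local well-posedness, a global a priori bound, and uniqueness by a weighted Gronwall argument.

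\emph{Step 1: local theory and continuation.} I would start from a local existence result in weighted paracontrolled spaces. Write $u=F(u)\prec\sI(\eta)+u^\sharp$. Solve the resulting fixed-point problem for $(u,u^\sharp)$ in $\mathbb S_T^{1-\kappa,\gamma}(\rho_{D})\times\mathbb S_T^{2-2\kappa-,\gamma}(\rho_{D'})$ on a short interval, using weighted paraproduct, commutator and Schauder estimates. Only a fixed polynomial weight loss is allowed, and it is absorbed because $D_0+2s_0$ is small. Stability under approximation then shows that limits of the smooth solutions driven by $\eta_n$ (with renormalization $c_n$) are solutions in the sense of Definition~\ref{def:pc-solution}. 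Global existence on $[0,T]$ then reduces to an a priori bound on $\norm{u}_{L^\infty_T L^\infty(\rho_{D})}$ that does not depend on the approximation. Given such a bound, the local existence time depends only on the bounded quantities, so the solution can be iterated to reach time $T$.

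\emph{Step 2: a priori bound.} Apply the annular high--low decomposition of Section~\ref{sec:parameters-weights}, $\eta=\eta_{>L}+\eta_{\le L}$, with a cutoff $L$ chosen depending on the size of the solution. The high part is small in $C^{-1-\kappa}$ with extra spatial decay. The low part is smooth, at a cost of a controlled power of $2^{L}$ and a controlled weight loss. Pose the first paracontrolled equation $\cL w=F(u)\prec\eta_{>L}+(\text{paraproduct part of }\Psi)$. The rest of $F(u)\eta$ goes into an equation for the remainder $v$, which I would write in transport form, $\cL v=F'(u)\,\nabla\sI(\eta_{\le L})\cdot\nabla v+\text{forcing}$, or in a similar form using the chain-rule trick of \cite{SZZ24}. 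Then estimate $v$ twice:
\begin{itemize}
\item in weighted $L^\infty(\rho_{D_1})$ by the weighted maximum principle of Section~\ref{sec:transport-representation}, whose transport drift is harmless;
\item in $\mathbb S^{\beta,\gamma}(\rho_{D_2})$ with $D_2>D_1$ by Schauder.
\end{itemize}
The weight gap $D_2-D_1$ pays for the weights of $\eta$, of $\Psi$ and of the localization. Combining the two bounds, with $2^{L}\sim$ a power of the current norm, should give an inequality of the form $X\lesssim 1+X^{\theta}$ with $\theta<1$, hence $X\lesssim1$.

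\emph{Step 3: uniqueness.} Take two solutions and form the difference $z=u_1-u_2$, together with its paracontrolled decomposition. Measure $z$ in a norm carrying the exponential-in-time weight $e^{-\lambda t\la x\ra}$ (following \cite{HL2d,HL}) times small polynomial weights. The Lipschitz assumption on $F''$ is what lets $F(u_1)-F(u_2)$ and the commutator terms be Lipschitz in the paracontrolled topology. The factor $\lambda$ yields a small constant $\lambda^{-\epsilon}$ in the Schauder estimate. Terms that have no exponential time gap are handled by an extra high--low split, which reserves additional decay for them. Taking $\lambda$ large then closes $\norm{z}\le\frac12\norm{z}$.

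\emph{Main obstacle.} The hardest part is the bookkeeping that closes Step 2. Every forcing term has to be bounded by a sublinear power of the norms, and the exponents depend on $\kappa$ and on the regularities assigned to $\eta_{>L}$ in each of its roles (commutators, resonant products, time increments, products with the high norm of $v$). I would first carry out the unweighted computation and optimize these regularity choices, checking that the binding constraint is $\kappa^2+4\kappa-1<0$. I would then add the weights as $O(\mu_*)$ perturbations and choose $\mu_*$ small.
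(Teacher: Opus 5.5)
Your overall architecture matches the paper's: a dynamic annular split, only $H(u)\para\Psi^>_{R_1}$ in the first equation, a maximum principle in a weaker weight, Schauder in a stronger one, and an exponential weight plus an extra high--low split for uniqueness. But Step 2 as written would not close, for two concrete reasons.

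\textbf{The drift is the wrong one.} No term $F'(u)\nabla\sI(\eta_{\le L})\cdot\nabla v$ arises; $F(u)\eta_R^\leqslant$ is plain forcing of size $2^{(1+\kappa+\varepsilon)R}$. The real obstruction is $\cR(u)\res\eta_R^>$. Paralinearization alone only gives $\|\cR(u)\|_{C^{2\alpha}}\lesssim1+\|u\|_{C^\alpha}^2$, which is quadratic and cannot be closed into $X\lesssim1+X^\theta$ with $\theta<1$. The paper removes this as follows:
\begin{itemize}
\item It Taylor-expands the kernel representation of $\cR(u)$ and freezes $\nabla u^\sharp_R$ at $x$. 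This gives \eqref{eq:transport-identity} with $B_R(u)=G(u)\res\eta_R^>$, built from the \emph{high} block.
\item Boundedness of $F,F',F''$ makes $G_1$ bounded in $C^{2-\kappa}$ and $G_2$ controlled by $\|w\|_{C^{1+\varepsilon}}^{\kappa+5\varepsilon}$ (Lemma~\ref{lem:G-bounds}). This sublinear coefficient is what makes every term at most linear.
\item The drift is not harmless. It grows like $\la x\ra^{D_{\rm dr}}$ and carries $t^{-\omega_B}$. It produces the factor $(1+K^{\kappa+5\varepsilon})^{p_{\rm mp}}$ in Lemma~\ref{lem:max} and the term $(1+K^{\kappa+5\varepsilon})K$ in the Schauder step, and both must be carried through the scalar closure.
\end{itemize}

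\textbf{The weights are not an $O(\mu_*)$ perturbation of an unweighted computation.} In your own scheme, the gap $D_2-D_1$ must pay for the localization, and the localization costs scale with the high-block decay, i.e.\ with the weights themselves. In the paper, the ratio $\ell=D_L/D$ does not change as $\mu_{\rm wt}\downarrow0$, and it must lie in the window $\theta<\ell<\ell_+$ of Lemma~\ref{lem:split-weights}.
\begin{itemize}
\item Lower bound: one annular split (one slope) must give both $d_\eta+s_0<D_L$ for the low block in $C^\varepsilon$, and $D-\sigma_{\eta,\varepsilon}<D_L$ for the high block in $C^{-2+\varepsilon}$ paired with $w\in C^s(\rho_D)$. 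At $\varepsilon=0$ this forces $\ell>(1+\kappa)/2$.
\item Upper bound: $B_R\cdot\nabla w$ must land in $\rho_D$, i.e.\ $D_{\rm dr}+D_\varepsilon<D$, which forces $\ell<(1-\kappa)/(1+\kappa)$.
\end{itemize}
This window is nonempty exactly when $\kappa^2+4\kappa-1<0$. It also requires the decoupled localization of Lemma~\ref{lem:bridge-localization}; the diagonal localization of \cite{GH18} would not do. Shrinking $\mu_*$ cannot substitute for any of this.

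\textbf{Step 1 takes a different route, and its stated mechanism fails.} In a fixed polynomially weighted space, terms such as $(F'(u)\para u^\sharp)\res\eta$ with $\eta\in\cC^{-1-\kappa}_{s_0}$ raise the weight of $u^\sharp$ by $s_0$. Smallness of $D_0+2s_0$ does not make the solution map preserve the space. You would need the decaying high block, or a time-dependent exponential weight, already at the local level. The restart from $u(t_1)$, which lies only in the solution's larger weight, would also have to be arranged so that weight does not accumulate. The paper avoids local theory altogether: it applies the uniform bound of Theorem~\ref{thm:apriori} on all of $[0,T]$ to the approximations and concludes existence by compactness as in \cite{GH18}.

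\textbf{Step 3 needs two corrections.} First, the weight $e^{-\lambda t\la x\ra}$ (exponent $\varrho=1$) is incompatible with Littlewood--Paley blocks. A compactly supported Fourier multiplier cannot have an exponentially decaying kernel. This is why the paper uses $\exp\{-(1+t)\la x\ra^\varrho\}$ with $\varrho<1$ and Gevrey cutoffs. Second, the paper does not close by taking $\lambda$ large; that gain is not free, since the weight constants in the paraproduct and commutator estimates deteriorate as $\lambda$ grows. Instead it:
\begin{itemize}
\item chooses $R,R_1$ large to make the terms without a time gap small, namely $\dlt F\para\sI\eta_R^>$ and $[\sI,\dlt F\para]\eta_R^>$;
\item then applies Lemma~\ref{lem:volterra-gronwall} on short time intervals.
\end{itemize}
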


\begin{remark}[Relation with sine--Gordon]\label{rem:sine-gordon}
We expect that the approach developed in this paper can be used to prove global well-posedness of the dynamical sine--Gordon model on \(\mathbb R^2\) (see e.g. \cite{HS16,CHS18,BC26}), which would require a parabolic space-time, finite-dimensional vector-valued version of the same deterministic argument, with a matrix enhanced noise and space--time paraproducts.  For the dynamical sine--Gordon equation on \(\mathbb R^2\), write
\[
  (\partial_t-\frac12\Delta)\Phi=\sin(b\Phi)+\zeta,
  \qquad (\partial_t-\frac12\Delta)Z=\zeta,
\]
for space-time white noise $\zeta$ on \(\mathbb R^2\). After the Da Prato--Debussche transform \(\Phi=Z+u\), the Wick fields
\[
  \xi_c=:\!\cos(bZ)\!:,
  \qquad
  \xi_s=:\!\sin(bZ)\!:
\]
have parabolic regularity below \(-\lambda_b\), where \(\lambda_b=b^2/(4\pi)\).  The threshold of this paper suggests the range
\[
  \lambda_b<\sqrt{5}-1,
  \qquad\text{equivalently}\qquad
  b^2<4\pi(\sqrt{5}-1).
\]
We do not pursue this parabolic/vector extension here.
\end{remark}

\section{High--low decomposition}\label{sec:parameters-weights}


In this section we construct a high--low decomposition of the enhanced noise. We prove the annular localization lemma, Lemma~\ref{lem:bridge-localization},  which is a variant of the localization technique used in \cite{GH18,GH18a}.  Proposition~\ref{prop:annular-split} then applies this lemma to the enhanced noise.

Let \((\chi_k)_{k\geqslant0}\) be a smooth dyadic annular partition of unity on
\(\mathbb R^2\), and let \((\widetilde\chi_k)_{k\geqslant0}\) be slightly enlarged
cutoffs such that
\[
  \widetilde\chi_k\chi_k=\chi_k,
\]
(see Appendix \ref{sec:tools} for the detailed definition).
For an integer $J$, we use the Littlewood--Paley truncation notation
\[
  \Delta_{>J}:=\sum_{j>J}\Delta_j,
  \qquad
  \Delta_{\leqslant J}:=\sum_{j\leqslant J}\Delta_j .
\]

\begin{lemma}\label{lem:bridge-localization}

Let $f\in C^{\beta_0}(\rho_s)$ with $\beta_0\in\mathbb{R}$ and $s>0$, where $\rho_s(x)=\la x\ra^{-s}$ as in \eqref{eq:weighted-holder}.
For \(\beta_h<\beta_0\), \(\sigma>0\), and \(R\geqslant0\), there exists a decomposition of $f=f_R^>+f_R^\leqslant$ such that
 uniformly in \(R\),
\[
  \|f_R^>\|_{C^{\beta_h}(\rho_{-\sigma})}
  \lesssim
  2^{-(\beta_0-\beta_h)R}\|f\|_{C^{\beta_0}(\rho_s)},
\]
and
\[
  \|f_R^>\|_{C^{\beta_0}(\rho_s)}
  +
  \|f_R^\leqslant\|_{C^{\beta_0}(\rho_s)}
  \lesssim \|f\|_{C^{\beta_0}(\rho_s)}.
\]
Moreover, for every \(\beta<\beta_0\),
\[
  \|f_R^>\|_{C^\beta(\rho_{-\sigma(\beta)})}
  \lesssim
  2^{-(\beta_0-\beta)R}\|f\|_{C^{\beta_0}(\rho_s)},
  \qquad
  \sigma(\beta)
  :=
  \frac{\beta_0-\beta}{\beta_0-\beta_h}(s+\sigma)-s ,
\]
and, for every \(\beta_\ell>\beta_0\),
\[
  \|f_R^\leqslant\|_{C^{\beta_\ell}(\rho_{d(\beta_\ell)})}
  \lesssim
  2^{(\beta_\ell-\beta_0)R}\|f\|_{C^{\beta_0}(\rho_s)},
  \qquad
  d(\beta_\ell)
  :=
  s+(\beta_\ell-\beta_0)
  \frac{s+\sigma}{\beta_0-\beta_h}.
\]
\end{lemma}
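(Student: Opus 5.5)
We construct the decomposition by pairing each spatial annulus with its own frequency cutoff, in the spirit of \cite[Lemma~2.4]{GH18}. For \(k\geqslant0\) let \(\chi_k\) be supported where \(\langle x\rangle\sim2^k\), and choose a frequency level
\[
  L_k:=\Big\lceil R+\frac{(s+\sigma)k}{\beta_0-\beta_h}\Big\rceil .
\]
We then set
\[
  f_R^>:=\sum_{k\geqslant0}\chi_k\,\Delta_{>L_k}f,
  \qquad
  f_R^\leqslant:=\sum_{k\geqslant0}\chi_k\,\Delta_{\leqslant L_k}f .
\]
Since \(\sum_k\chi_k=1\), we have \(f=f_R^>+f_R^\leqslant\).

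The first step is a local estimate for each piece. On \(\operatorname{supp}\chi_k\) the weight satisfies \(\rho_s\sim2^{-ks}\), so \(\|\Delta_j f\|_{L^\infty(\operatorname{supp}\widetilde\chi_k)}\lesssim 2^{ks}2^{-\beta_0 j}\|f\|_{C^{\beta_0}(\rho_s)}\). For the high part this gives
\[
  \|\widetilde\chi_k\Delta_{>L_k}f\|_{C^{\beta}}\lesssim 2^{ks}2^{-(\beta_0-\beta)L_k}\|f\|
\]
for \(\beta<\beta_0\); for the low part it gives
\[
  \|\widetilde\chi_k\Delta_{\leqslant L_k}f\|_{C^{\beta_\ell}}\lesssim 2^{ks}2^{(\beta_\ell-\beta_0)L_k}\|f\|
\]
for \(\beta_\ell>\beta_0\). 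Substituting \(L_k\) turns these into
\[
  2^{-(\beta_0-\beta)R}\,2^{-k\sigma(\beta)}
  \qquad\text{and}\qquad
  2^{(\beta_\ell-\beta_0)R}\,2^{k\,d(\beta_\ell)}
\]
respectively, since
\[
  s-\frac{(\beta_0-\beta)(s+\sigma)}{\beta_0-\beta_h}=-\sigma(\beta),
  \qquad
  s+\frac{(\beta_\ell-\beta_0)(s+\sigma)}{\beta_0-\beta_h}=d(\beta_\ell).
\]
This is exactly what fixes the choice of \(L_k\). Taking \(\beta=\beta_h\) recovers \(\sigma(\beta_h)=\sigma\) and the first claim. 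The uniform \(C^{\beta_0}(\rho_s)\) bounds follow from the same computation with no gain: multiplication by \(\chi_k\) and truncation by \(\Delta_{>L_k}\) or \(\Delta_{\leqslant L_k}\) are bounded on unweighted \(C^{\beta_0}\), uniformly in \(k\) and \(L_k\).

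The main obstacle is the second step: passing from these annulus-wise bounds to the weighted global norm. We must show that \(\sup_j 2^{\beta j}\|\rho_w\Delta_j\sum_k\chi_k g_k\|_{L^\infty}\) is controlled by \(\sup_k \rho_w(2^k)\|\widetilde\chi_k g_k\|_{C^\beta}\), where \(g_k\) is the relevant truncation of \(f\). The difficulties are that \(\Delta_j\) is nonlocal, that \(\chi_k g_k\) is not frequency-localized, and that \(\beta\) may be negative. I would first use the paraproduct/multiplier estimate \(\|\chi_k g\|_{C^\beta}\lesssim\|\chi_k\|_{C^{|\beta|+1}}\|\widetilde\chi_k g\|_{C^\beta}\), which is uniform in \(k\) because the dyadic annular cutoffs have uniformly bounded derivatives. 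I would then use the rapid decay of the Littlewood--Paley kernels: the Schwartz tails of \(\Delta_j\) let \(\rho_w(x)\lesssim \rho_w(y)\langle x-y\rangle^{|w|}\) absorb the weight mismatch. Finally, the locally finite overlap of the annuli lets us sum in \(k\), with only finitely many \(k\) contributing at each \(x\) up to tails \(\lesssim 2^{-N|k-k'|}\). This is standard weighted Besov machinery (Appendix~\ref{sec:tools}), and we need it to hold with constants independent of \(R\), which holds since \(R\) enters only through \(L_k\).
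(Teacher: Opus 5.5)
Your proposal is correct and is essentially the paper's proof: it uses the same levels $J_k=\lceil R+\frac{s+\sigma}{\beta_0-\beta_h}k\rceil$, the same Bernstein-type gain/loss computation producing $2^{-(\beta_0-\beta)R}2^{-k\sigma(\beta)}$ and $2^{(\beta_\ell-\beta_0)R}2^{k\,d(\beta_\ell)}$, and the same annular summation (Lemma~\ref{lem:weighted-lp-annular}) to return to the weighted norm. The only difference is the order of operations: the paper localizes before truncating, setting $f_R^>=\sum_k\widetilde\chi_k\Delta_{>J_k}(\chi_kf)$, so Bernstein and the uniform boundedness of $\Delta_{>J_k},\Delta_{\leqslant J_k}$ act on the unweighted $\chi_kf\in C^{\beta_0}$ with $\|\chi_kf\|_{C^{\beta_0}}\lesssim2^{sk}\|f\|_{C^{\beta_0}(\rho_s)}$; in your ordering $f$ need not lie in unweighted $C^{\beta_0}$, so your per-annulus bound should instead be justified by writing $\widetilde\chi_k g=(\widetilde\chi_k\rho_s^{-1})(\rho_s g)$ with $\|\widetilde\chi_k\rho_s^{-1}\|_{C^M}\lesssim2^{sk}$, together with boundedness of the truncations on the weighted space $C^{\beta}(\rho_s)$.
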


\begin{proof} Let  \(f_k=\chi_k f\). Then by Lemma \ref{lem:weighted-lp-annular} it satisfies
\[
  \|f_k\|_{C^{\beta_0}}\leqslant \|f\|_{C^{\beta_0}(\rho_s)}2^{sk},
  \qquad k\geqslant0.
\]
 Let \[
  J_k:=
  \left\lceil
    R+\frac{s+\sigma}{\beta_0-\beta_h}k
  \right\rceil ,
\]
and
\[
  f_R^>
  :=
  \sum_{k\geqslant0}\widetilde\chi_k\Delta_{>J_k}f_k,
  \qquad
  f_R^\leqslant
  :=
  \sum_{k\geqslant0}\widetilde\chi_k\Delta_{\leqslant J_k}f_k .
\]
It follows directly that $f=f_R^>+f_R^\leqslant$.
The integer parts $\left\lceil\cdots \right\rceil$ in \(J_k\) only affect the implicit constants.  For any
\(\beta<\beta_0\), Bernstein's inequality gives
\[
  \|\Delta_{>J_k}f_k\|_{C^\beta}
  \lesssim
  2^{-(\beta_0-\beta)J_k}\|f_k\|_{C^{\beta_0}}
  \lesssim
  \|f\|_{C^{\beta_0}(\rho_s)}2^{-(\beta_0-\beta)R}
  2^{-\sigma(\beta)k}.
\]
Taking \(\beta=\beta_h\) gives \(\sigma(\beta_h)=\sigma\).  Lemma \ref{lem:weighted-lp-annular} then yields the high-block bounds.

The rough bounds follow from the uniform boundedness of
\(\Delta_{>J_k}\) and \(\Delta_{\leqslant J_k}\) on \(C^{\beta_0}\):
\[
  \|\Delta_{>J_k}f_k\|_{C^{\beta_0}}
  +
  \|\Delta_{\leqslant J_k}f_k\|_{C^{\beta_0}}
  \lesssim \|f\|_{C^{\beta_0}(\rho_s)}2^{sk}.
\]
Lemma \ref{lem:weighted-lp-annular}  gives the two \(C^{\beta_0}(\rho_s)\) bounds.

For \(\beta_\ell>\beta_0\), Bernstein's inequality in the opposite direction gives
\[
  \|\Delta_{\leqslant J_k}f_k\|_{C^{\beta_\ell}}
  \lesssim
  2^{(\beta_\ell-\beta_0)J_k}\|f_k\|_{C^{\beta_0}}
  \lesssim
  \|f\|_{C^{\beta_0}(\rho_s)}2^{(\beta_\ell-\beta_0)R}2^{d(\beta_\ell)k}.
\]
Lemma \ref{lem:weighted-lp-annular}  gives the low-block estimate.
\end{proof}

\begin{remark}[Comparison with the localization lemma]\label{rem:gh-localization-comparison}
 In \cite[Lemma~2.4]{GH18},  \(
  J_k:=
  \left\lceil
    R+c_k
  \right\rceil
\) for $c_k\simeq k$, which means $s+\sigma=\beta_0-\beta_h$. This choice ties changes in regularity to changes in the polynomial weight: a loss of \(\delta\) derivatives for the
high block is accompanied by a gain of \(\delta\) powers in the weight, and
a gain of \(\gamma\) derivatives for the low block is accompanied by a loss
of \(\gamma\) powers in the weight. For the estimates in the present paper, however, this fixed coupling between regularity and polynomial weight is too restrictive. 
In the diagonal form of \cite[Lemma~2.4]{GH18}, the required regularity gain, of order \(1+\kappa+\varepsilon\), would necessarily produce a polynomial weight loss of the same order. 
Such a loss is incompatible with the small-weight regime used in the maximum-principle estimate.
\end{remark}

In preparation for the annular high--low decomposition and the later a priori estimates, we now fix the regularity exponents and polynomial weights used throughout the argument.
Fix $0<\kappa<\sqrt5-2$.  In the following $\varepsilon>0$ is chosen after $\kappa$ and is sufficiently small.
Recall $D_0,s_0$ in Theorem~\ref{thm:main}.

The choices below are made in the following order. First \(\kappa\) is fixed, then \(\varepsilon\) is chosen sufficiently small, and finally the total polynomial-weight scale \(\mu_{\rm wt}\) is taken small enough. All strict inequalities used later are ensured by these successive choices.

\begin{definition}\label{def:kappa-admissible-weights}
 We define the following parameters and weights.

(1) (Some regularity parameters and a total small weight.)
Set
\begin{equation}\label{eq:alpha-eps-choice}
  \alpha:=1-\kappa-5\varepsilon,\qquad s:=2-\varepsilon/2,\qquad \mu_{\rm wt}:=D_0+2s_0.
\end{equation}
After decreasing $\varepsilon$ if necessary, we shall use
\[
  2\alpha-1-\kappa-2\varepsilon>0,
  \qquad
  \alpha-2\kappa-3\varepsilon>0.
\]
These two positive quantities are the high-frequency gains for $\eta$ and $\Psi$, respectively.

(2) (Some parameters for the final a priori bound  in Section~\ref{sec:closure}.)
Set
\begin{equation}\label{eq:m-GammaR}
  m_\varepsilon:=\left(\frac{s}{\alpha}(2\alpha-1-\kappa-2\varepsilon)\right)\wedge(1-\kappa-\varepsilon),\qquad   \theta:=\frac{1+\kappa+\varepsilon}{1+\kappa+\varepsilon+m_\varepsilon}.
\end{equation}
 For the weight used later, we then introduce the following two parameters:
\begin{equation}\label{eq:theta-ell-def}
\ell_+:=\frac{(1+(\kappa+5\varepsilon))^{-1}-\frac{1+\varepsilon}{s}}{1-\frac{1+\varepsilon}{s}},\quad \ell:=\frac{\theta+\ell_+}{2}.
\end{equation}

(3) (More ``$D$-type'' weights.)
By using $\ell$ above, the solution weights are defined as
\begin{equation}\label{eq:D-scale}
  D_L:=\mu_{\rm wt}^{1/2},
  \qquad
  D:=\frac{D_L}{\ell},
  \qquad
  D_1:=D_L-\mu_{\rm wt}^{3/4}.
\end{equation}
The interpolation weights are defined as
\begin{equation}\label{eq:Dalpha-Deps}
  D_\alpha:=\left(1-\frac\alpha s\right)D_L+\frac\alpha sD.
\end{equation}

(4)
(Parameters used in splitting noise $\eta$.)
Set
\begin{equation}\label{eq:sigma-eta-endpoints}
\begin{aligned}
  \sigma_{\eta,-}&:=
  \max\left\{
      D_\alpha-D_1,
      \frac{2\alpha-1-\kappa-2\varepsilon}{1-\kappa-\varepsilon}(D-D_L+s_0)-s_0
  \right\},\\
  \sigma_{\eta,+}&:=
  \frac{2\alpha-1-\kappa-2\varepsilon}{1+\kappa+\varepsilon}(D_L-2s_0)-s_0,
\end{aligned}
\end{equation}
\begin{equation}\label{eq:sigma-eta-midpoint}
  \sigma_\eta:=\frac12(\sigma_{\eta,-}+\sigma_{\eta,+}),
\end{equation}
\begin{equation}\label{eq:sigma-etad-deta}
\begin{aligned}
  \sigma_{\eta,\varepsilon}
  &:=\frac{1-\kappa-\varepsilon}{2\alpha-1-\kappa-2\varepsilon}(s_0+\sigma_\eta)-s_0,\\
  d_\eta
  &:=s_0+(1+\kappa+\varepsilon)
  \frac{s_0+\sigma_\eta}{2\alpha-1-\kappa-2\varepsilon}.
\end{aligned}
\end{equation}

(5) (Parameters used in splitting noise $\Psi$.)
We set
\begin{equation}\label{eq:sigma-Psi-endpoints}
  \sigma_{\Psi,-}:=D_\alpha-D_1,
  \qquad
  \sigma_{\Psi,+}:=\frac{\alpha-2\kappa-3\varepsilon}{2\kappa+\varepsilon}(D_L-s_0)-s_0,
  \qquad
  \sigma_\Psi:=\frac12(\sigma_{\Psi,-}+\sigma_{\Psi,+}),
\end{equation}
\begin{equation}\label{eq:dPsi}
  d_\Psi:=s_0+(2\kappa+\varepsilon)
  \frac{s_0+\sigma_\Psi}{\alpha-2\kappa-3\varepsilon}.
\end{equation}
\end{definition}

Let us briefly describe the purposes of these parameters in the above Definition~\ref{def:kappa-admissible-weights}:
\begin{enumerate}[itemsep=5pt, parsep=0pt]
\item
In \eqref{eq:alpha-eps-choice},
 $\alpha,s$ are the parameters for regularity of the solutions used later.
\item
The quantities $m_\varepsilon,\theta$ in \eqref{eq:m-GammaR} are chosen so that the
monomials of the scalar quantities are sublinear in the final estimate; these are important in
the scalar closure arguments in the proof of Theorem~\ref{thm:apriori}.
\item
In \eqref{eq:theta-ell-def}, $\ell_+$ will play the role of the upper admissible weight-ratio
constraint imposed by the Schauder estimate for the transport term $B_R(u)\cdot\nabla w$ (treated as a product), while $\ell$ is
the actual chosen ratio $D_L/D$, with strict room between the scalar
cutoff constraint $\theta$ and the transport weight constraint $\ell_+$.
\item
In \eqref{eq:D-scale},
the smaller weight $D_L$ is used in the $L^\infty$ norm, while the larger weight $D$ is
used in the higher regularity norm.  The slightly smaller weight $D_1$ is
reserved for  $u^\sharp_1$ in the following decomposition; the small difference
$D_L-D_1=\mu_{\rm wt}^{3/4}$ leaves room for the stochastic term weight $s_0$ in
products that are finally estimated with weight $D_L$.
The weight $D_\alpha$ in \eqref{eq:Dalpha-Deps} is the weight obtained by interpolating the
$L^\infty(\rho_{D_L})$ and $C^s(\rho_D)$  to obtain the
$C^\alpha$ control used later.
\item
Regarding \eqref{eq:sigma-eta-endpoints}--\eqref{eq:sigma-eta-midpoint}:
The parameter $\sigma_\eta$ is the polynomial decay assigned to the high
 block $\eta^>_R$.  The lower endpoint in \(\sigma_{\eta,-}\) guarantees $D_\alpha-\sigma_\eta<D_1$ and
$D-\sigma_{\eta,\varepsilon}<D_L$, which are needed when the high block noise is
paired respectively with the $C^\alpha$ component and with the  remainder
$w$ below.
The upper endpoint \(\sigma_{\eta,+}\)  ensures that the induced low-block weight fits
inside the maximum-principle weight, in particular $d_\eta+s_0<D_L$.

The quantities $\sigma_{\eta,\varepsilon}$ and $d_\eta$  in \eqref{eq:sigma-etad-deta} are the weights produced by the high-low decomposition
when the high block is measured in $C^{-2+\varepsilon}$ and the low
block is measured in $C^\varepsilon$.
\item
Similarly, in \eqref{eq:sigma-Psi-endpoints}--\eqref{eq:dPsi}, $\sigma_\Psi$ is the decay assigned to the high block
$\Psi^>_{R_1}$, while $d_\Psi$ is the growth of the corresponding low block.
The condition on $\sigma_{\Psi,-}$ gives
$D_\alpha-\sigma_\Psi<D_1$, and $\sigma_{\Psi,+}$ gives
$d_\Psi<D_L$, which are important later.
\end{enumerate}


We first record only the elementary consequences of the parameter choices that are needed for the annular high--low decomposition.

\begin{lemma}
\label{lem:split-weights}

It holds that
\begin{equation}\label{eq:ell-window}
  \theta<\ell<\ell_+
\end{equation}
and
\begin{equation}\label{eq:split-intervals}
  \sigma_{\eta,-}<\sigma_{\eta,+},
  \qquad
  \sigma_{\Psi,-}<\sigma_{\Psi,+}.
\end{equation}
\end{lemma}

\begin{proof}
We first prove \(\theta<\ell_+\).  At \(\varepsilon=0\),
\[
  \alpha\to1-\kappa,
  \qquad s\to2.
\]
Moreover, the second term in the definition of $m_\varepsilon$ is active for sufficiently small $\varepsilon$
precisely when  \(\kappa^2+4\kappa-1<0\) which is true since $\kappa<\sqrt 5-2$, so
 \(m_\varepsilon\to1-\kappa\). Also,
\[
  \theta\to\frac{1+\kappa}{2},
  \qquad
  \ell_+\to\frac{1-\kappa}{1+\kappa}.
\]
The inequality
\[
  \frac{1+\kappa}{2}<\frac{1-\kappa}{1+\kappa}
\]
is  equivalent to \(\kappa^2+4\kappa-1<0\).  Therefore \(\theta<\ell_+\) for sufficiently small \(\varepsilon\), and so \(\theta<\ell<\ell_+\).

We next show the first inequality in \eqref{eq:split-intervals}.  Since
\[
  D_L=\mu_{\rm wt}^{1/2},
  \qquad
  D=\frac{D_L}{\ell},
  \qquad
  D_1=D_L-\mu_{\rm wt}^{3/4},
\]
one has \(s_0/D_L\to0\) and \((D_L-D_1)/D_L\to0\) as \(\mu_{\rm wt}\downarrow0\).  Thus it suffices to check the leading inequalities.  Since
\[
  D-D_L=\frac{1-\ell}{\ell}D_L,
  \qquad
  D_\alpha-D_1=\frac{\alpha}{s}(D-D_L)+(D_L-D_1),
\]
the two leading comparisons needed for \(\sigma_{\eta,-}<\sigma_{\eta,+}\) are
\[
  \frac{\alpha}{s}(D-D_L)<\frac{1-3\kappa}{1+\kappa}D_L,
  \qquad
  \frac{1-3\kappa}{1-\kappa}(D-D_L)<\frac{1-3\kappa}{1+\kappa}D_L.
\]
They are equivalent to
\[
  \ell>\frac{1+\kappa}{1+\kappa+\frac{s}{\alpha}(1-3\kappa)},
  \qquad
  \ell>\frac{1+\kappa}{1-\kappa+1+\kappa}.
\]
Both follow from
\[
  m_\varepsilon\leqslant\frac{s}{\alpha}(1-3\kappa),
  \qquad
  m_\varepsilon\leqslant 1-\kappa,
  \qquad
  \theta=\frac{1+\kappa+\varepsilon}{1+\kappa+\varepsilon+m_\varepsilon}<\ell.
\]
The lower-order terms \(s_0\) and \(D_L-D_1\) are \(o(D_L)\), so decreasing \(\mu_{\rm wt}\) gives \(\sigma_{\eta,-}<\sigma_{\eta,+}\).

For the second inequality, set \(A_\Psi=\alpha-2\kappa-3\varepsilon\) and \(C_\Psi=2\kappa+\varepsilon\).  The leading comparison for \(\sigma_{\Psi,-}<\sigma_{\Psi,+}\) is
\[
  \frac{\alpha}{s}(D-D_L)<\frac{A_\Psi}{C_\Psi}D_L,
\]
or equivalently
\[
  \ell>\frac{\alpha C_\Psi}{\alpha C_\Psi+sA_\Psi}.
\]
At \(\varepsilon=0\), the right-hand side converges to
\[
  \frac{\kappa(1-\kappa)}{1-2\kappa-\kappa^2}.
\]
Furthermore,
\[
  \frac{1+\kappa}{2}-\frac{\kappa(1-\kappa)}{1-2\kappa-\kappa^2}
  =
  \frac{1-3\kappa-\kappa^2-\kappa^3}{2(1-2\kappa-\kappa^2)}>0
\]
for \(0<\kappa<\sqrt5-2\).  Since \(\theta\to(1+\kappa)/2\) and \(\ell>\theta\), the leading inequality holds for small \(\varepsilon\).  Again the perturbative terms are \(o(D_L)\), so shrinking $\mu_{\rm wt}$ gives \(\sigma_{\Psi,-}<\sigma_{\Psi,+}\).
\end{proof}

In the next proposition, we split the enhanced noise into high and low components and record the corresponding bounds.

\begin{proposition}\label{prop:annular-split}
Assume that
$  \boldsymbol\eta=(\eta,\Psi)$
is an enhanced noise in the sense of Definition~\ref{enhanced}.  Let \(\sigma_{\eta,\varepsilon}\), \(d_\eta\), and \(d_\Psi\) be fixed as in Definition~\ref{def:kappa-admissible-weights}, see \eqref{eq:sigma-etad-deta} and \eqref{eq:dPsi}. For every \(R,R_1\geqslant0\), there are decompositions $\eta=\eta_R^>+\eta_R^\leqslant$ and $\Psi=\Psi_{R_1}^>+\Psi_{R_1}^\leqslant$ such that the following bounds hold uniformly in \(R,R_1\):
\begin{align}
  \norm{\eta_R^>}_{L_T^\infty\cC_{-\sigma_\eta}^{-(2\alpha-2\varepsilon)}}&\leqslant C2^{-(2\alpha-1-\kappa-2\varepsilon)R},
  &\norm{\eta_R^>}_{L_T^\infty\cC_{s_0}^{-1-\kappa}}+
  \norm{\eta_R^\leqslant}_{L_T^\infty\cC_{s_0}^{-1-\kappa}}&\leqslant C,
  \label{eq:eta-high-lambda}\\
  \norm{\eta_R^>}_{L_T^\infty\cC_{s_0}^{\alpha-2}}&\leqslant C2^{-(1-\kappa-\alpha)R},
  &\norm{\eta_R^>}_{L_T^\infty\cC_{-\sigma_{\eta,\varepsilon}}^{-2+\varepsilon}}&\leqslant C2^{-(1-\kappa-\varepsilon)R},
  \label{eq:eta-high-alpha-time}\\
  \norm{\eta_R^\leqslant}_{L_T^\infty\cC_{d_\eta}^{\varepsilon}}&\leqslant C2^{(1+\kappa+\varepsilon)R}.&&
  \label{eq:eta-low}
\end{align}
Moreover,
\begin{align}
  \norm{\Psi_{R_1}^>}_{L_T^\infty\cC_{-\sigma_\Psi}^{-\alpha+2\varepsilon}}&\leqslant C2^{-(\alpha-2\kappa-3\varepsilon)R_1},
  &\norm{\Psi_{R_1}^\leqslant}_{L_T^\infty\cC_{d_\Psi}^{\varepsilon}}&\leqslant C2^{(2\kappa+\varepsilon)R_1},
  \label{eq:Psi-high}\\
  \norm{\Psi_{R_1}^>}_{L_T^\infty\cC_{s_0}^{-2\kappa}}
  +\norm{\Psi_{R_1}^\leqslant}_{L_T^\infty\cC_{s_0}^{-2\kappa}}&\leqslant C.&&
  \label{eq:Psi-rough-bound}
\end{align}
\end{proposition}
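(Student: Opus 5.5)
The plan is to apply Lemma~\ref{lem:bridge-localization} to $\eta(t)$ and to $\Psi(t)$ separately for each fixed $t\in(0,T]$, and then take the supremum in $t$. Two facts make this work. First, the decomposition in that lemma is given by the explicit linear formula $f_R^>=\sum_k\widetilde\chi_k\Delta_{>J_k}(\chi_k f)$, with $J_k$ depending only on $R,k$ and the exponents, and not on $f$. So $t\mapsto\eta_R^>(t)$ inherits measurability from $\eta$. Second, every bound in the lemma is linear in $\|f\|_{C^{\beta_0}(\rho_s)}$ with constants independent of $R$. Taking $\sup_t$ therefore turns each estimate into the corresponding $L^\infty_T$ estimate, with $C$ a multiple of $\|\eta\|_{L^\infty_T\cC^{-1-\kappa}_{s_0}}$ or $\|\Psi\|_{L^\infty_T\cC^{-2\kappa}_{s_0}}$. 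The lemma requires $\sigma>0$. For $\eta$ this holds because $\sigma_\eta>\sigma_{\eta,-}\geqslant D_\alpha-D_1>0$, using $D>D_L>D_1$ and Lemma~\ref{lem:split-weights}. For $\Psi$ it holds because $\sigma_\Psi>\sigma_{\Psi,-}=D_\alpha-D_1>0$.

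\textbf{The splitting of $\eta$.} Take $\beta_0=-1-\kappa$, $s=s_0$, $\beta_h=-(2\alpha-2\varepsilon)$ and $\sigma=\sigma_\eta$, so that $\beta_0-\beta_h=2\alpha-1-\kappa-2\varepsilon$. The main high-block estimate of the lemma then gives the first bound in \eqref{eq:eta-high-lambda}, and the rough bounds give the second one.

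For the two bounds in \eqref{eq:eta-high-alpha-time} I use the ``moreover'' part of the lemma.
\begin{itemize}
\item With $\beta=-2+\varepsilon$ we get $\beta_0-\beta=1-\kappa-\varepsilon$ and
\[
\sigma(\beta)=\frac{1-\kappa-\varepsilon}{2\alpha-1-\kappa-2\varepsilon}(s_0+\sigma_\eta)-s_0=\sigma_{\eta,\varepsilon},
\]
which is exactly the weight in the statement.
\item With $\beta=\alpha-2$ we get $\beta_0-\beta=1-\kappa-\alpha>0$. The lemma controls the $C^{\alpha-2}(\rho_{-\sigma(\alpha-2)})$ norm. Since $\sigma(\beta)+s_0=\frac{\beta_0-\beta}{\beta_0-\beta_h}(s_0+\sigma_\eta)>0$, we have $\rho_{s_0}=\langle x\rangle^{-s_0}\leqslant\langle x\rangle^{\sigma(\beta)}=\rho_{-\sigma(\beta)}$. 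Hence the $\cC^{\alpha-2}_{s_0}$ norm is dominated by it. Alternatively one can rerun the Bernstein step in the proof of the lemma, keeping the factor $2^{s_0k}$ instead of converting it to decay.
\end{itemize}
Finally, the low-block part with $\beta_\ell=\varepsilon$ gives the growth factor $2^{(1+\kappa+\varepsilon)R}$ and the weight
\[
d(\varepsilon)=s_0+(1+\kappa+\varepsilon)\frac{s_0+\sigma_\eta}{2\alpha-1-\kappa-2\varepsilon}=d_\eta .
\]
This is \eqref{eq:eta-low}.

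\textbf{The splitting of $\Psi$.} The only subtlety is that the stated rate $\alpha-2\kappa-3\varepsilon$ is one $\varepsilon$ worse than the natural gap from $-2\kappa$ to $-\alpha+2\varepsilon$. The exponent $d_\Psi$ is also defined with this worse gap. So I apply the lemma with $\beta_0=-2\kappa$, $s=s_0$, $\sigma=\sigma_\Psi$ and the auxiliary choice $\beta_h=-\alpha+3\varepsilon$, for which $\beta_0-\beta_h=\alpha-2\kappa-3\varepsilon$. This gives
\[
\|\Psi^>_{R_1}\|_{\cC^{-\alpha+3\varepsilon}_{-\sigma_\Psi}}\lesssim 2^{-(\alpha-2\kappa-3\varepsilon)R_1},
\]
and the trivial embedding $\cC^{-\alpha+3\varepsilon}_{-\sigma_\Psi}\hookrightarrow\cC^{-\alpha+2\varepsilon}_{-\sigma_\Psi}$ yields the first bound in \eqref{eq:Psi-high}. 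The low-block estimate with $\beta_\ell=\varepsilon$ has gap $2\kappa+\varepsilon$ and weight
\[
s_0+(2\kappa+\varepsilon)\frac{s_0+\sigma_\Psi}{\alpha-2\kappa-3\varepsilon}=d_\Psi,
\]
which gives the second bound in \eqref{eq:Psi-high}. The rough bounds of the lemma give \eqref{eq:Psi-rough-bound}.

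I do not expect a genuine obstacle, since everything reduces to Lemma~\ref{lem:bridge-localization}. The main point requiring care is bookkeeping. One has to check that each exponent pair $(\beta,\sigma(\beta))$ or $(\beta_\ell,d(\beta_\ell))$ reproduces exactly the parameters of Definition~\ref{def:kappa-admissible-weights}. One also has to choose the auxiliary $\beta_h$ for $\Psi$ correctly, and verify positivity of $\sigma_\eta,\sigma_\Psi$ and of the gaps $2\alpha-1-\kappa-2\varepsilon$ and $\alpha-2\kappa-3\varepsilon$. Both gaps are positive for small $\varepsilon$ by the choice in \eqref{eq:alpha-eps-choice}.
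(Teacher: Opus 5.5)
Your proposal is correct and follows the paper's proof. The paper applies Lemma~\ref{lem:bridge-localization} to $\eta$ with $(\beta_0,\beta_h,\sigma)=(-1-\kappa,-(2\alpha-2\varepsilon),\sigma_\eta)$, and to $\Psi$ with $\beta_0=-2\kappa$, the auxiliary choice $\beta_h=-\alpha+3\varepsilon$, $\beta_\ell=\varepsilon$, $\sigma=\sigma_\Psi$, followed by the embedding into $C^{-\alpha+2\varepsilon}$, exactly as you do. Your additional checks fill in details the paper leaves implicit: positivity of $\sigma_\eta,\sigma_\Psi$, the identifications $\sigma(-2+\varepsilon)=\sigma_{\eta,\varepsilon}$ and $d(\varepsilon)=d_\eta$, and the comparison $\rho_{s_0}\leqslant\rho_{-\sigma(\alpha-2)}$ used for the $C^{\alpha-2}(\rho_{s_0})$ bound.
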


\begin{proof}
We apply Lemma~\ref{lem:bridge-localization} for $\eta$ with
\[
  \beta_0=-1-\kappa,\qquad
  \beta_h=-(2\alpha-2\varepsilon),\qquad
  \sigma=\sigma_\eta.
\]
This gives \eqref{eq:eta-high-lambda}--\eqref{eq:eta-low}.
For the split of $\Psi$, we apply Lemma~\ref{lem:bridge-localization} with
\[
  \beta_0=-2\kappa,\qquad
  \beta_h=-\alpha+3\varepsilon,\qquad
  \beta_\ell=\varepsilon,\qquad
  \sigma=\sigma_\Psi.
\]
Then $\beta_0-\beta_h=\alpha-2\kappa-3\varepsilon$ and $\beta_\ell-\beta_0=2\kappa+\varepsilon$, which give the decay factor and the low-block weight in \eqref{eq:Psi-high}.  The high block is first bounded in $C^{-\alpha+3\varepsilon}(\rho_{-\sigma_\Psi})$ and hence, by the continuous embedding $C^{-\alpha+3\varepsilon}\hookrightarrow C^{-\alpha+2\varepsilon}$, also in the displayed space.  The rough bound \eqref{eq:Psi-rough-bound} follows from the uniform boundedness of the high and low pieces in the original \(C^{-2\kappa}(\rho_{s_0})\) norm.
\end{proof}

  For later use, we also introduce the following notations:
\[
 \Psi_R^{>>}=\sI(\eta_R^>)\res\eta_R^>
\]
with the renormalization induced by the full enhancement.
 Equivalently, and this is the definition for a rough enhanced datum, it is given by
\begin{equation*}
 \Psi_R^{>>}:=\Psi-\Psi_R^{\rm mix},
\end{equation*}
where the mixed terms contain at least one low factor and are classically well-defined,
\begin{equation*}
  \Psi_R^{\rm mix}
  :=(\sI\eta_R^\leqslant)\res\eta_R^\leqslant
    +(\sI\eta_R^\leqslant)\res\eta_R^>
    +(\sI\eta_R^>)\res\eta_R^\leqslant.
\end{equation*}
Then
\begin{equation}\label{eq:inherited-high-second}
 \Psi_R^{>>}
  =\Psi_{R_1}^>+\Theta_{R,R_1},
  \qquad
  \Theta_{R,R_1}:=\Psi_{R_1}^\leqslant-\Psi_R^{\rm mix}.
\end{equation}

\begin{lemma}\label{lem:inherited-second-level}
It holds that
\begin{equation}\label{eq:mix-second-level-bound}
  \norm{\Psi_R^{\rm mix}}_{L_T^\infty\cC_{s_0+d_\eta}^{\varepsilon}}
  \leqslant C2^{(1+\kappa+\varepsilon)R},
\end{equation}
\begin{equation}\label{eq:Theta-catalogue-term}
  \norm{\Theta_{R,R_1}}_{L_T^\infty L^\infty(\rho_{D_L})}
  \lesssim
2^{(1+\kappa+\varepsilon)R}+2^{(2\kappa+\varepsilon)R_1}.
\end{equation}
\end{lemma}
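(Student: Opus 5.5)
The plan is to estimate each of the three mixed resonant products in $\Psi_R^{\rm mix}$ separately, using weighted resonant-product and Schauder estimates with weights adding under multiplication. For a resonant product, $\|f\res g\|_{C^{a+b}(\rho_{D_1+D_2})}\lesssim\|f\|_{C^a(\rho_{D_1})}\|g\|_{C^b(\rho_{D_2})}$ whenever $a+b>0$. For the heat integral, the weighted Schauder estimate $\|\sI f\|_{L^\infty_TC^{\beta+2}(\rho_D)}\lesssim_T\|f\|_{L^\infty_TC^{\beta}(\rho_D)}$ holds for polynomial weights, possibly with a harmless loss of $\varepsilon$ in regularity. I take both as available from the tools appendix (Appendix~\ref{sec:tools}).

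\emph{Low--low term.} By \eqref{eq:eta-low}, $\eta_R^\leqslant\in C^\varepsilon(\rho_{d_\eta})$ with norm at most $C2^{(1+\kappa+\varepsilon)R}$. The rough bound \eqref{eq:eta-high-lambda} gives $\sI\eta_R^\leqslant\in C^{1-\kappa}(\rho_{s_0})$ uniformly in $R$. Hence $(\sI\eta_R^\leqslant)\res\eta_R^\leqslant$ lies in $C^{1-\kappa+\varepsilon}(\rho_{s_0+d_\eta})\hookrightarrow C^\varepsilon(\rho_{s_0+d_\eta})$ with the claimed bound.

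\emph{Low--high term.} For $(\sI\eta_R^\leqslant)\res\eta_R^>$, I would put the smooth factor in the high-regularity slot: $\sI\eta_R^\leqslant\in C^{2+\varepsilon}(\rho_{d_\eta})$ with norm at most $C2^{(1+\kappa+\varepsilon)R}$. The other factor satisfies $\eta_R^>\in C^{-1-\kappa}(\rho_{s_0})$ uniformly. The regularity sum is $1-\kappa+\varepsilon>\varepsilon$ and the weight is $d_\eta+s_0$.

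\emph{High--low term.} For $(\sI\eta_R^>)\res\eta_R^\leqslant$, I would use $\sI\eta_R^>\in C^{1-\kappa}(\rho_{s_0})$ uniformly together with $\eta_R^\leqslant\in C^\varepsilon(\rho_{d_\eta})$. This gives regularity $1-\kappa+\varepsilon$ and the same weight. Summing the three terms proves \eqref{eq:mix-second-level-bound}.

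\emph{Bound on $\Theta_{R,R_1}$.} By definition $\Theta_{R,R_1}=\Psi_{R_1}^\leqslant-\Psi_R^{\rm mix}$. Since $\varepsilon>0$, both pieces are controlled in $L^\infty$ through $C^\varepsilon\hookrightarrow L^\infty$ at the same weight. The remaining point is the weights. From \eqref{eq:Psi-high}, $\|\Psi_{R_1}^\leqslant\|_{L^\infty(\rho_{d_\Psi})}\lesssim 2^{(2\kappa+\varepsilon)R_1}$, and Lemma~\ref{lem:split-weights} together with the remark after Definition~\ref{def:kappa-admissible-weights} gives $d_\Psi<D_L$. Likewise the upper endpoint $\sigma_{\eta,+}$ was chosen so that $d_\eta+s_0<D_L$. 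Since $\rho_{D'}\leqslant\rho_{D_L}$ pointwise for $D'\leqslant D_L$, both bounds transfer to $L^\infty(\rho_{D_L})$, which gives \eqref{eq:Theta-catalogue-term}.

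\emph{Main obstacle.} The step I expect to need care is verifying $d_\eta+s_0<D_L$ and $d_\Psi<D_L$ from the parameter definitions; the product estimates themselves are routine. Substituting $\sigma_\eta<\sigma_{\eta,+}$ into \eqref{eq:sigma-etad-deta} gives $s_0+\sigma_\eta<\frac{2\alpha-1-\kappa-2\varepsilon}{1+\kappa+\varepsilon}(D_L-2s_0)$, hence $d_\eta<s_0+D_L-2s_0=D_L-s_0$. In the same way, substituting $\sigma_\Psi<\sigma_{\Psi,+}$ into \eqref{eq:dPsi} gives $d_\Psi<s_0+D_L-s_0=D_L$. These strict inequalities rely on \eqref{eq:split-intervals}, which guarantees that the midpoints lie strictly below the upper endpoints.
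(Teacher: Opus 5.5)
Your proposal is correct and follows the paper's proof essentially step for step. It uses the same pairings in the resonant estimate: $\sI\eta_R^{\leqslant}\in C^{1-\kappa}(\rho_{s_0})$ or $C^{2+\varepsilon}(\rho_{d_\eta})$ and $\sI\eta_R^{>}\in C^{1-\kappa}(\rho_{s_0})$, each against the appropriate noise block. It also derives $s_0+d_\eta<D_L$ and $d_\Psi<D_L$ from $\sigma_\eta<\sigma_{\eta,+}$ and $\sigma_\Psi<\sigma_{\Psi,+}$ exactly as the paper does.

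There is one small slip in the transfer step. Since $\langle x\rangle\geqslant1$, for $D'\leqslant D_L$ one has $\rho_{D_L}\leqslant\rho_{D'}$, not the reverse. This is exactly what gives $\|f\|_{L^\infty(\rho_{D_L})}\leqslant\|f\|_{L^\infty(\rho_{D'})}$, so your conclusion stands.
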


\begin{proof}
  From \eqref{eq:eta-high-lambda} and Lemma~\ref{lem:mild-time-increments}
\[
  \norm{\sI\eta_R^>}_{L_T^\infty\cC_{s_0}^{1-\kappa}}
  +\norm{\sI\eta_R^\leqslant}_{L_T^\infty\cC_{s_0}^{1-\kappa}}
  \leqslant C.
\]
From \eqref{eq:eta-low},
\[
  \norm{\sI\eta_R^\leqslant}_{L_T^\infty\cC_{d_\eta}^{2+\varepsilon}}
  +\norm{\eta_R^\leqslant}_{L_T^\infty\cC_{d_\eta}^{\varepsilon}}
  \leqslant C2^{(1+\kappa+\varepsilon)R}.
\]
Lemma \ref{lem:para} then gives the positive regularity bound
\[
  \norm{(\sI\eta_R^\leqslant)\res\eta_R^\leqslant}_{L_T^\infty\cC_{s_0+d_\eta}^{\varepsilon}}
  +\norm{(\sI\eta_R^\leqslant)\res\eta_R^>}_{L_T^\infty\cC_{s_0+d_\eta}^{\varepsilon}}
  +\norm{(\sI\eta_R^>)\res\eta_R^\leqslant}_{L_T^\infty\cC_{s_0+d_\eta}^{\varepsilon}}
  \leqslant C2^{(1+\kappa+\varepsilon)R}.
\]
All three resonant products are well defined since \(1+\varepsilon-\kappa>0\),
and we embed \(C^{1+\varepsilon-\kappa}\) into \(C^\varepsilon\).

We now check \eqref{eq:Theta-catalogue-term}.
Set
\[
  A_\eta:=2\alpha-1-\kappa-2\varepsilon,
  \qquad
  C_\eta:=1+\kappa+\varepsilon.
\]
Since the first split interval is non-empty by Lemma~\ref{lem:split-weights} and \(\sigma_\eta\) is its midpoint, we have \(\sigma_\eta<\sigma_{\eta,+}\).  Hence, by \eqref{eq:sigma-eta-endpoints},
\[
  \sigma_\eta<\frac{A_\eta}{C_\eta}(D_L-2s_0)-s_0,
  \qquad\text{so}\qquad
  \frac{C_\eta(s_0+\sigma_\eta)}{A_\eta}<D_L-2s_0.
\]
Using the definition of \(d_\eta\) in \eqref{eq:sigma-etad-deta}, this gives
\begin{align}\label{eq:DL}
  s_0+d_\eta
  =2s_0+\frac{C_\eta(s_0+\sigma_\eta)}{A_\eta}
  <D_L.
\end{align}
Similarly, set
\[
  A_\Psi:=\alpha-2\kappa-3\varepsilon,
  \qquad
  C_\Psi:=2\kappa+\varepsilon.
\]
Since \(\sigma_\Psi<\sigma_{\Psi,+}\), \eqref{eq:sigma-Psi-endpoints} gives
\[
  \sigma_\Psi<\frac{A_\Psi}{C_\Psi}(D_L-s_0)-s_0,
  \qquad\text{so}\qquad
  \frac{C_\Psi(s_0+\sigma_\Psi)}{A_\Psi}<D_L-s_0.
\]
By the definition of \(d_\Psi\) in \eqref{eq:dPsi},
\[
  d_\Psi=s_0+\frac{C_\Psi(s_0+\sigma_\Psi)}{A_\Psi}<D_L.
\]
Therefore
\[
  \norm{\Psi_R^{\rm mix}}_{L_T^\infty L^\infty(\rho_{D_L})}
  \lesssim
  \norm{\Psi_R^{\rm mix}}_{L_T^\infty C^\varepsilon(\rho_{s_0+d_\eta})},
\]
and 
\[
  \norm{\Psi_{R_1}^\leqslant}_{L_T^\infty L^\infty(\rho_{D_L})}
  \lesssim
  \norm{\Psi_{R_1}^\leqslant}_{L_T^\infty C^\varepsilon(\rho_{d_\Psi})}.
\]
    Combining \eqref{eq:mix-second-level-bound} with \eqref{eq:Psi-high} proves \eqref{eq:Theta-catalogue-term}.
\end{proof}

\section{Paracontrolled decomposition}\label{sec:model-pde-pc}

In this section we  formulate the paracontrolled solution  and then replace the  ansatz by a cutoff representative based on the high--low split of the enhanced noise.
Via this representative we separate a directly controlled first term from a final remainder, so that the latter can later be estimated by a maximum principle and a Schauder estimate in two different polynomial weights.

\subsection{Paracontrolled decomposition}\label{sec:pc}

We first give the definition of a paracontrolled solution. The ansatz is:
\begin{equation}\label{eq:pc-ansatz}
  u=F(u)\prec \mathcal I\eta+u^\sharp .
\end{equation}
Set
\[
  H(u):=F(u)F'(u),
  \qquad
  \operatorname{Com}(f,g,h):=(f\prec g)\circ h-f(g\circ h).
\]
The product after Bony decomposition is
\begin{equation}\label{eq:pc-product}
\begin{aligned}
 F(u) \eta
  :=\;&F(u)\prec\eta+F(u)\succ\eta
  +\big(F(u)-F'(u)\prec u\big)\circ\eta
  +(F'(u)\prec u^\sharp)\circ\eta                                      \\
  &+\operatorname{Com}\big(F'(u),F(u)\prec\mathcal I\eta,\eta\big)
  +F'(u)\operatorname{Com}\big(F(u),\mathcal I\eta,\eta\big)
  +H(u)\Psi .
\end{aligned}
\end{equation}

To define a paracontrolled solution we introduce the following regularity exponents
\begin{equation}\label{eq:s-r-gamma}
  r:=1+\kappa+7\varepsilon,
  \qquad
  \gamma:=\frac{2\alpha+\kappa-1}{2}+\varepsilon .
\end{equation}
Moreover, after decreasing \(\varepsilon\) if necessary,
\begin{equation}\label{eq:time-product-margin}
  2\gamma>r-1+\kappa,
  \qquad 0<\gamma<1 .
\end{equation}

\begin{definition}[Paracontrolled solution]\label{def:pc-solution}
 A pair \((u,u^\sharp)\) is called a paracontrolled solution if
\[
  u\in \mathbb S_T^{\alpha,\gamma\alpha/s;\alpha/2}(\rho_{D_\alpha}),
  \qquad
  u^\sharp\in C_T C^{1-\kappa}(\rho_D)
  \cap \mathbb S_T^{r,\gamma;\alpha/2}(\rho_D),
\]
and if \eqref{eq:pc-ansatz} holds
		and the equation \eqref{eq:gpam} holds in the analytically weak sense with the  product $F(u)  \eta$  given by \eqref{eq:pc-product}.
\end{definition}

Let
\begin{equation*}
  \cR(u):=F(u)-F'(u)\para u.
\end{equation*}
For cutoffs $R,R_1$ to be selected dynamically, write the cutoff representative as
\begin{equation}\label{eq:ansatz}
  u=F(u)\para\sI(\eta_R^>)+u^\sharp_R,
  \qquad
  u^\sharp_R=u_1^\sharp+w.
\end{equation}
Since \((\sI\eta_R^>)(0)=0\), the choices \(u_1^\sharp(0)=u_0\) and \(w(0)=0\) give the original initial condition \(u(0)=u_0\).  $u_1^\sharp$ is defined in mild form by
\begin{equation}\label{eq:u1-mild}
  u_1^\sharp
  =P_tu_0+[\sI,F(u)\para]\eta_R^>
   +\sI\bigl(F(u)\Par\eta_R^>+H(u)\para\Psi_{R_1}^>\bigr),
\end{equation}
where
\[
  [\sI,F(u)\para]\eta_R^>
  :=\sI(F(u)\para\eta_R^>)-F(u)\para\sI(\eta_R^>).
\]
 The remainder solves
\begin{equation}\label{eq:w-eq-pre}
\begin{cases}
  \cL w=\cR(u)\res\eta_R^>+F(u)\eta_R^\leqslant+(F'(u)\para u)\res\eta_R^>-H(u)\para\Psi_{R_1}^>,\\
  w(0)=0.
\end{cases}
\end{equation}

\begin{remark}[Comparison with the decomposition of \cite{SZZ24}]\label{rem:comparison-szz24-split}
  In \cite{SZZ24} the equation of $u_1^\sharp$ contains the full  product
\(
  H(u)\Psi^>
= H(u)\para\Psi^>+H(u)\res\Psi^>+H(u)\Par\Psi^> \).
The last two pieces require positive regularity of the coefficient, for instance
\(H(u)\in C^\alpha\), and their estimates therefore involve the size of the solution.  Consequently, the high-regularity estimate for \(u_1^\sharp\) contains a contribution of the form
$  2^{-(\alpha-2\kappa-3\varepsilon)R_1} \|u\|_\alpha $, see \cite[(4.9)]{SZZ24}.
This creates a stronger restriction because the transport representation later differentiates \(u_1^\sharp\), through terms such as
\(B_R(u)\cdot\nabla u_1^\sharp\) and $\cE_R^{\rm fr}(x)$ defined in \eqref{eq:E-freeze-def}.

In the present decomposition we put only the  paraproduct
\(
  H(u)\para\Psi_{R_1}^>
\) into the equation of $u_1^\sharp$.
This term is estimated by the paraproduct bound using only \(\|H(u)\|_{L^\infty}\), not the \(C^\alpha\)-norm of \(H(u)\).  This is why Proposition~\ref{prop:first-layer} gives \eqref{eq:u1-r}
with no factor \(U_\alpha\) multiplying the term with $R_1$.  The part
$  H(u)\res\Psi_{R_1}^>+H(u)\Par\Psi_{R_1}^>$
remains in the forcing term; see \eqref{eq:pc-resonant-expanded}.  There it is not differentiated by the transport representation, and it is estimated directly.
This is one of the mechanisms that reduce the final restrictions and yield the result for the larger range \(0<\kappa<\sqrt5-2\).
\end{remark}

\begin{remark}[Equivalence of the cutoff representative]\label{rem:cutoff-equivalence}
The decomposition with cutoffs \(R,R_1\) is only a representative of the solution in Definition~\ref{def:pc-solution}.    For any fixed admissible cutoffs, the cutoff ansatz
\eqref{eq:ansatz}
is equivalent to \eqref{eq:pc-ansatz} by the algebraic change of representative
\[
  u^\sharp=u^\sharp_R-F(u)\para\sI(\eta_R^\leqslant),
  \qquad
  u^\sharp_R=u^\sharp+F(u)\para\sI(\eta_R^\leqslant).
\]
Since \(\eta_R^\leqslant\) is a classical low-frequency object for fixed \(R\), this change preserves the regularities required in Definition~\ref{def:pc-solution} and in the cutoff space below.  Moreover,  \(\Psi_R^{>>}=\Psi_{R_1}^>+\Theta_{R,R_1}\); after adding back the low and mixed terms, the cutoff equations \eqref{eq:ansatz}--\eqref{eq:w-eq-pre} reconstruct exactly the full resonant product \eqref{eq:pc-product}.  Thus a solution constructed from the \((R,R_1)\)-decomposition is a  solution in the sense of Definition~\ref{def:pc-solution}, and conversely every Definition~\ref{def:pc-solution} solution admits this cutoff representative.
\end{remark}

\subsection{Estimate of \texorpdfstring{$u_1^\sharp$}{u1 sharp}}

We first define the scalar quantities
\begin{equation}\label{eq:L-S-def}
  L:=\norm{w}_{L_T^\infty L^\infty(\rho_{D_L})},
  \qquad
  S:=\norm{w}_{\mathbb S_T^{s,\gamma;s/2}(\rho_D)}.
\end{equation}
Here \(\gamma\) is the time parameter fixed in \eqref{eq:s-r-gamma}.
For later use, we also introduce the following weight parameter
 \[
  d_0:=2\mu_{\rm wt}.
\]
 By Lemma \ref{lem:interp},
\begin{equation}\label{eq:Ualpha-def}
  \norm{w}_{\mathbb S_T^{\alpha,\gamma\alpha/s;\alpha/2}(\rho_{D_\alpha})}
  \lesssim L^{1-\alpha/s}S^{\alpha/s}=:U_\alpha.
\end{equation}


\begin{proposition}\label{prop:first-layer}
There exist \(R_\eta,R_\Psi\geqslant1\) such that the following estimates hold for every \(R\geqslant R_\eta\) and every \(R_1\geqslant R_\Psi\):
\begin{equation}\label{eq:u1-low}
  \norm{u_1^\sharp}_{C_T C^{1-\kappa}(\rho_{d_0})}\leqslant C,
\end{equation}
 and
\begin{equation}\label{eq:u1-r}
  \norm{u_1^\sharp}_{\mathbb S_T^{r,\gamma;\alpha/2}(\rho_{D_1})}
  \leqslant C\bigl(1+2^{-(2\alpha-1-\kappa-2\varepsilon)R}U_\alpha+2^{-(\alpha-2\kappa-3\varepsilon)R_1}\bigr).
\end{equation}
Consequently
\begin{equation}\label{eq:u-alpha}
  \norm{u}_{\mathbb S_T^{\alpha,\gamma\alpha/s;\alpha/2}(\rho_{D_\alpha})}
  \leqslant C(1+U_\alpha).
\end{equation}
\end{proposition}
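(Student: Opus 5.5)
Using the mild formula \eqref{eq:u1-mild}, I would estimate its three contributions separately: the free evolution $P_tu_0$, the commutator $[\sI,F(u)\para]\eta_R^>$, and the Duhamel term $\sI(F(u)\Par\eta_R^>+H(u)\para\Psi_{R_1}^>)$. Throughout I would use only weighted paraproduct bounds (Lemma~\ref{lem:para}), weighted Schauder and time-increment estimates (Lemma~\ref{lem:mild-time-increments}), and the split bounds of Proposition~\ref{prop:annular-split}. The guiding principle is that $F(u)$ and $H(u)$ enter through $\|\cdot\|_{L^\infty}$, which is bounded since $F\in C_b^2$. Only the commutator and the $\Par$ term genuinely need $C^\alpha$ regularity of $F(u)$.

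For the low bound \eqref{eq:u1-low}, I would use only rough bounds. First, $\|P_tu_0\|_{C^{1-\kappa}(\rho_{D_0})}\lesssim\|u_0\|$. Second, $\|F(u)\para\sI\eta_R^>\|_{C^{1-\kappa}(\rho_{s_0})}\lesssim \|F\|_\infty$, and the corresponding Schauder bound $\sI(F(u)\para\eta_R^>)\in C^{1-\kappa}(\rho_{s_0})$ holds with constant depending only on $\|F\|_\infty$ and \eqref{eq:eta-high-lambda}. For the $\Par$ term, I would place $F(u)$ in a low-regularity space: interpolating $L^\infty$ with a weak $C^\epsilon$-type bound is problematic, so I would use \eqref{eq:eta-high-lambda}-\eqref{eq:eta-high-alpha-time} and choose $R_\eta$ large to absorb $2^{-cR}U_\alpha$-type terms. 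Doing this cleanly is subtle. Finally, $H(u)\para\Psi^>_{R_1}\in C^{-2\kappa}(\rho_{s_0})$ gives $\sI(\cdot)\in C^{2-2\kappa}$. All weights here are at most $d_0=2\mu_{\rm wt}$, since $D_0,s_0\le\mu_{\rm wt}$. If the $\Par$ term cannot be bounded independently of $u$, I would instead use the fact that $F(u)\Par\eta_R^>$ only needs $F(u)\in C^{\delta}$ with a tiny $\delta$. That in turn comes from the bounded $C^{1-\kappa}$-type pieces of $u$ in a bootstrap, where $R_\eta$ is taken large so that the high block contributes a small factor.

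For \eqref{eq:u1-r}, the key estimates are as follows.
\begin{enumerate}
\item The commutator lemma gives $\|[\sI,F(u)\para]\eta_R^>\|_{C^{r}}\lesssim\|F(u)\|_{C^\alpha}\|\eta_R^>\|_{C^{-(2\alpha-2\varepsilon)}(\rho_{-\sigma_\eta})}$, since $r<2+\alpha-(2\alpha-2\varepsilon)$, with the time weight $t^{\gamma}$ compensating the blowup $t^{-\gamma\alpha/s}$ of $\|u\|_{C^\alpha}$. By \eqref{eq:eta-high-lambda} this yields $2^{-(2\alpha-1-\kappa-2\varepsilon)R}\|F(u)\|_{C^\alpha(\rho_{D_\alpha})}$. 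The weight is $D_\alpha-\sigma_\eta<D_1$, which is exactly the purpose of $\sigma_{\eta,-}$.
\item The term $F(u)\Par\eta_R^>$ is treated identically, via Schauder.
\item For $H(u)\para\Psi^>_{R_1}$, I would use \eqref{eq:Psi-high} together with $\|H\|_\infty$: the result lies in $C^{-\alpha+2\varepsilon}(\rho_{-\sigma_\Psi})$, so Schauder gives $C^{2-\alpha+2\varepsilon}\supset C^{r}$ (because $r<2-\alpha$) with factor $2^{-(\alpha-2\kappa-3\varepsilon)R_1}$. This produces no factor $U_\alpha$.
\end{enumerate}
Next I would split $\|F(u)\|_{C^\alpha(\rho_{D_\alpha})}\lesssim 1+\|u\|_\alpha$ and bound $\|u\|_\alpha\le \|F(u)\para\sI\eta_R^>\|+\|u_1^\sharp\|_\alpha+U_\alpha$. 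The circular term $\|u_1^\sharp\|$ is multiplied by $2^{-cR}$, so choosing $R\ge R_\eta$ large absorbs it. The time Hölder part follows from Lemma~\ref{lem:mild-time-increments}.

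Finally, \eqref{eq:u-alpha} follows by writing $u=F(u)\para\sI\eta_R^>+u_1^\sharp+w$ and bounding the three pieces as follows. The first is bounded by a constant in $C^{1-\kappa}\subset C^\alpha$, together with its time increments. The second is bounded by \eqref{eq:u1-low}, \eqref{eq:u1-r} and interpolation. The third is bounded by \eqref{eq:Ualpha-def}. The terms $2^{-cR}U_\alpha$ are at most $U_\alpha$, which gives the claim.

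The main obstacle I expect is the weight and time-singularity bookkeeping in the commutator estimate, namely matching $t^{-\gamma\alpha/s}$ against $\gamma$ via \eqref{eq:time-product-margin}, together with closing the absorption argument uniformly in $R$.
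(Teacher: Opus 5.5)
There is a genuine gap in the low bound \eqref{eq:u1-low}, at the term $\sI(F(u)\Par\eta_R^>)$ that you yourself flag as subtle.

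Both of your routes make this term depend on the solution.
\begin{itemize}
\item Using $F(u)\in C^\alpha(\rho_{D_\alpha})$ with the high-block gain gives a bound $C(1+2^{-(2\alpha-1-\kappa-2\varepsilon)R}U_\alpha)$. A large deterministic $R_\eta$ only makes the coefficient small. Since $U_\alpha$ does not appear on the left-hand side, nothing is absorbed.
\item The $C^\delta$-bootstrap fails for the same reason: $u$ contains $w$, whose H\"older norms are controlled only through $L$ and $S$.
\end{itemize}
Both routes also carry the weight of $u$: for instance $D_\alpha-\sigma_\eta$, which is comparable to $D_L=\mu_{\rm wt}^{1/2}$, or the even larger weight of $w$. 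These are far above $d_0=2\mu_{\rm wt}$, so the routes cannot produce the $\rho_{d_0}$-weighted estimate at all.

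The pure constant in \eqref{eq:u1-low} matters later: it is used for $G_1$ in Lemma~\ref{lem:G-bounds} and in the interpolation for $\nabla u_1^\sharp$.

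The missing observation is that $F(u)\Par\eta_R^>=\eta_R^>\para F(u)$ has the rough factor in the low-frequency slot. Apply \eqref{eq:para-neg} with regularity $-1-\kappa<0$ and $F(u)\in L^\infty\subset C^0$. This gives $\norm{F(u)\Par\eta_R^>}_{C^{-1-\kappa}(\rho_{s_0})}\lesssim\norm{F(u)}_{L^\infty}\norm{\eta_R^>}_{C^{-1-\kappa}(\rho_{s_0})}\lesssim 1$. Schauder then lands in $C^{1-\kappa}(\rho_{s_0})\subset C^{1-\kappa}(\rho_{d_0})$. The $C^\alpha$ regularity of $F(u)$ is needed for this term only in the high estimate, to extract the factor $2^{-(2\alpha-1-\kappa-2\varepsilon)R}$. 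Your treatment of that high estimate (commutator, $\Par$ term, and $H(u)\para\Psi_{R_1}^>$ with only $\norm{H}_\infty$) otherwise matches the paper.

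There are also errors in the $\alpha$-level closure.
\begin{itemize}
\item The time increments of $F(u)\para\sI\eta_R^>$ are not bounded by a constant. The piece $(F(u(t))-F(u(s)))\para\sI\eta_R^>(t)$ needs the $\alpha/2$-time regularity of $F(u)$. It yields $C2^{-(2\alpha-1-\kappa-2\varepsilon)R}\norm{u}_{\mathbb S_T^{\alpha,\gamma\alpha/s;\alpha/2}(\rho_{D_\alpha})}$, via $\sI\eta_R^>\in L^\infty(\rho_{-\sigma_\eta})$. This term must enter the absorption inequality.
\item The same piece sits inside the commutator, whose time regularity is not covered by Lemma~\ref{lem:mild-time-increments} alone. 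You must split it into $\sI(F(u)\para\eta_R^>)$, using $\eta_R^>\in C^{\alpha-2}(\rho_{s_0})$, and $F(u)\para\sI\eta_R^>$.
\item Interpolating \eqref{eq:u1-low} with \eqref{eq:u1-r} does not give the $\alpha/2$-H\"older-in-time seminorm with the milder singularity $t^{-\gamma\alpha/s}$ needed in \eqref{eq:u-alpha}; it only lowers the H\"older exponent. You need to estimate $u_1^\sharp$ directly in $\mathbb S_T^{\alpha,\gamma\alpha/s;\alpha/2}(\rho_{D_\alpha})$, which each Duhamel piece permits.
\item Minor: $r=2-\alpha+2\varepsilon$ exactly. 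So the commutator and Schauder steps sit at the endpoint $\theta=2$, which is admissible since the margins reduce to $\Omega\geqslant\omega$, not with the spare room you claim.
\end{itemize}
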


\begin{proof}
We estimate the four terms in the mild formula \eqref{eq:u1-mild}.  Since $F\in C_b^2(\mathbb R)$ and $H=FF'$ has bounded first derivative, the standard composition estimate for Besov--H\"older norms gives
\begin{equation}\label{eq:composition-first-layer-check}
  \|F(u)\|_{L_T^\infty L^\infty}+
  \|H(u)\|_{L_T^\infty L^\infty}
  \leqslant C,
  \qquad
  \|F(u)\|_{\mathbb S_T^{\alpha,\gamma\alpha/s;\alpha/2}(\rho_{D_\alpha})}
  \leqslant C(1+U_\alpha^{\rm tot}),
\end{equation}
where
\[
  U_\alpha^{\rm tot}:=
  \|u\|_{\mathbb S_T^{\alpha,\gamma\alpha/s;\alpha/2}(\rho_{D_\alpha})}.
\]

  By \eqref{eq:D-scale} and the definition of \(d_0\), for $\mu_{\rm wt}$ small enough we obtain
\begin{equation}\label{eq:initial-weight-margin}
  0<D_0<d_0<D_1<D_L,
  \qquad 0<s_0<d_0,
  \qquad D_1+s_0<D_L.
\end{equation}

\smallskip
\noindent\emph{Proof of the low regularity bound \eqref{eq:u1-low}.}
Lemma~\ref{lem:mild-time-increments}  gives
\begin{equation}\label{eq:first-layer-heat-low-detail}
  \|P_tu_0\|_{C_T C^{1-\kappa}(\rho_{d_0})}
  \lesssim \|u_0\|_{C^{1-\kappa}(\rho_{D_0})},
\end{equation}
since \(D_0<d_0\) from \eqref{eq:initial-weight-margin}.

Using Lemma~\ref{lem:mild-time-increments}, \eqref{eq:para-Linfty-left},  \eqref{eq:eta-high-lambda}, and  \(s_0<d_0\) from \eqref{eq:initial-weight-margin} give

\begin{equation}\label{eq:comm-low-duhamel-detail}
  \|\sI(F(u)\para\eta_R^>)\|_{C_T C^{1-\kappa}(\rho_{d_0})}\lesssim\|F(u)\para\eta_R^>\|_{L_T^\infty C^{-1-\kappa}(\rho_{d_0})}
  \lesssim
  \|F(u)\|_{L_T^\infty L^\infty}
  \|\eta_R^>\|_{L_T^\infty C^{-1-\kappa}(\rho_{s_0})}
  \lesssim1.
\end{equation}
Also \eqref{eq:para-Linfty-left} gives
\begin{equation}\label{eq:comm-low-product-detail}
  \|F(u)\para\sI\eta_R^>\|_{C_T C^{1-\kappa}(\rho_{d_0})}\lesssim\|\sI\eta_R^>\|_{L_T^\infty C^{1-\kappa}(\rho_{s_0})}
  \lesssim
  \|\eta_R^>\|_{L_T^\infty C^{-1-\kappa}(\rho_{s_0})}
  \lesssim1.
\end{equation}
 Lemma~\ref{lem:para}
and  Lemma~\ref{lem:mild-time-increments} yield
\begin{equation}\label{eq:right-para-low-detail}
  \|\sI(F(u)\Par\eta_R^>)\|_{C_T C^{1-\kappa}(\rho_{d_0})}
  \lesssim \|F(u)\Par\eta_R^>\|_{L_T^\infty C^{-1-\kappa}(\rho_{s_0})}
  \lesssim
  \|\eta_R^>\|_{L_T^\infty C^{-1-\kappa}(\rho_{s_0})}
  \|F(u)\|_{L_T^\infty L^\infty}
  \lesssim1.
\end{equation}
 By \eqref{eq:para-Linfty-left} and \eqref{eq:Psi-high},
\begin{equation*}
  \|H(u)\para\Psi_{R_1}^>\|_{L_T^\infty C^{-\alpha+2\varepsilon}(\rho_{-\sigma_\Psi})}
  \lesssim
  \|H(u)\|_{L_T^\infty L^\infty}
  \|\Psi_{R_1}^>\|_{L_T^\infty C^{-\alpha+2\varepsilon}(\rho_{-\sigma_\Psi})}
  \lesssim 2^{-(\alpha-2\kappa-3\varepsilon)R_1}.
\end{equation*}
Using \(r=1+\kappa+7\varepsilon>1+\kappa+\varepsilon\) and Lemma~\ref{lem:mild-time-increments}, we obtain
\begin{equation}\label{eq:Psi-left-low-detail}
  \|\sI(H(u)\para\Psi_{R_1}^>)\|_{C_T C^{1-\kappa}(\rho_{d_0})}
  \lesssim1.
\end{equation}
Combining \eqref{eq:first-layer-heat-low-detail}, \eqref{eq:comm-low-duhamel-detail}, \eqref{eq:comm-low-product-detail}, \eqref{eq:right-para-low-detail}, and \eqref{eq:Psi-left-low-detail} proves \eqref{eq:u1-low}.

\medskip
We now prove \eqref{eq:u1-r}, which is divided in several steps.

\noindent\emph{High spatial regularity.}
We will use
\begin{equation}\label{eq:first-level-high-product-margin}
  D_\alpha-\sigma_\eta<D_1 .
\end{equation}
Indeed, by \eqref{eq:split-intervals}  \(\sigma_\eta>\sigma_{\eta,-}\).  Since \(\sigma_{\eta,-}\geqslant D_\alpha-D_1\), we have \(\sigma_\eta>D_\alpha-D_1\), namely \eqref{eq:first-level-high-product-margin}.

Using Lemma~\ref{lem:parabolic-I-comm},
\eqref{eq:first-level-high-product-margin}, together with  \eqref{eq:eta-high-lambda} and \eqref{eq:composition-first-layer-check}, we obtain
\begin{equation*}
  \|[\sI,F(u)\para]\eta_R^>\|_{C_T^{r,\gamma\alpha/s}(\rho_{D_1})}
  \lesssim
  2^{-(2\alpha-1-\kappa-2\varepsilon)R}(1+U_\alpha^{\rm tot}).
\end{equation*}

Moreover, \eqref{eq:para-neg} gives
\begin{equation}\label{eq:u1-para-term-spatial}
\begin{aligned}
  \|F(u)\Par\eta_R^>\|_{L_T^{\infty,\gamma\alpha/s}C^{-\alpha+2\varepsilon}(\rho_{D_1})}
  &\lesssim
  \|\eta_R^>\|_{L_T^\infty C^{-(2\alpha-2\varepsilon)}(\rho_{-\sigma_\eta})}
  \|F(u)\|_{\mathbb S_T^{\alpha,\gamma\alpha/s;\alpha/2}(\rho_{D_\alpha})}  \\
  &\lesssim
  2^{-(2\alpha-1-\kappa-2\varepsilon)R}(1+U_\alpha^{\rm tot}).
\end{aligned}
\end{equation}
where we used \eqref{eq:first-level-high-product-margin} and the high-block estimate in \eqref{eq:eta-high-lambda}.

 \eqref{eq:para-Linfty-left} and \eqref{eq:Psi-high} give
\begin{equation}\label{eq:u1-Psi-term-spatial}
  \|H(u)\para\Psi_{R_1}^>\|_{L_T^{\infty,\gamma\alpha/s}C^{-\alpha+2\varepsilon}(\rho_{D_1})}
  \lesssim 2^{-(\alpha-2\kappa-3\varepsilon)R_1}.
\end{equation}
Using Lemma~\ref{lem:mild-time-increments}, we obtain
\begin{equation*}
  \|P_tu_0\|_{C_T^{r,\gamma}(\rho_{D_1})}+ \|P_tu_0\|_{C_T^{\alpha,\gamma\alpha/s}(\rho_{D_1})}
  \lesssim \|u_0\|_{C^{1-\kappa}(\rho_{D_0})},
\end{equation*}
because \((r-(1-\kappa))/2<\gamma\)  by \eqref{eq:time-product-margin}, and \(D_0<D_1\).  Combining the above gives
\begin{equation*}
  \|u_1^\sharp\|_{C_T^{r,\gamma}(\rho_{D_1})}+ \|u_1^\sharp\|_{C_T^{\alpha,\gamma\alpha/s}(\rho_{D_1})}
  \lesssim
  1+2^{-(2\alpha-1-\kappa-2\varepsilon)R}U_\alpha^{\rm tot}
  +2^{-(\alpha-2\kappa-3\varepsilon)R_1}.
\end{equation*}

\smallskip
\noindent\emph{Time increments.}
 This part is the weighted analogue of the time-regularity step in the finite-volume proof \cite[Section~4.3]{SZZ24}.  We write
\[
  \omega_\alpha:=\gamma\alpha/s.
\]
Since \(\alpha<s, D_1<D_\alpha\),  it is enough to prove estimates with the smaller weight \(\omega_\alpha,D_1\).

Since \(1-\kappa>\alpha\) and \(D_0<D_1\) by \eqref{eq:initial-weight-margin}, Lemma~\ref{lem:mild-time-increments} gives
\begin{equation*}
  \|P_tu_0\|_{C_T^{\alpha/2,\omega_\alpha}L^\infty(\rho_{D_1})}
  \lesssim \|u_0\|_{C^{1-\kappa}(\rho_{D_0})}.
\end{equation*}

Next set
\[
  G_\eta:=F(u)\Par\eta_R^>,
  \qquad
  G_\Psi:=H(u)\para\Psi_{R_1}^>.
\]
Lemma~\ref{lem:mild-time-increments} and \eqref{eq:u1-para-term-spatial}--\eqref{eq:u1-Psi-term-spatial} yield
\begin{equation*}
\begin{aligned}
  &\|\sI G_\eta\|_{C_T^{\alpha/2,\omega_\alpha}L^\infty(\rho_{D_1})}
  +\|\sI G_\Psi\|_{C_T^{\alpha/2,\omega_\alpha}L^\infty(\rho_{D_1})} \\
  &\qquad\lesssim
  2^{-(2\alpha-1-\kappa-2\varepsilon)R}(1+U_\alpha^{\rm tot})
  +2^{-(\alpha-2\kappa-3\varepsilon)R_1}.
\end{aligned}
\end{equation*}

  By  \eqref{eq:para-Linfty-left} and \eqref{eq:eta-high-alpha-time}, we have
\begin{equation*}
\begin{aligned}
 \|F(u)\para\eta_R^>\|_{L_T^{\infty,\omega_\alpha}C^{\alpha-2}(\rho_{D_1})}
  &\lesssim
  \|F(u)\|_{L_T^\infty L^\infty}
  \|\eta_R^>\|_{L_T^\infty C^{\alpha-2}(\rho_{s_0})}
  &\lesssim 2^{-(1-\kappa-\alpha)R},
\end{aligned}
\end{equation*}
which by Lemma~\ref{lem:mild-time-increments} implies
\begin{equation*}
  \|\sI(F(u)\para\eta_R^>)\|_{C_T^{\alpha/2,\omega_\alpha}L^\infty(\rho_{D_1})}
  \lesssim 1 .
\end{equation*}
For the product part set
\[
  u_1:=F(u)\para\sI\eta_R^> .
\]
 For \(0<s<t\leqslant T\), decompose
\[
  u_1(t)-u_1(s)
  =\bigl(F(u(t))-F(u(s))\bigr)\para\sI\eta_R^>(t)
   +F(u(s))\para\bigl(\sI\eta_R^>(t)-\sI\eta_R^>(s)\bigr).
\]
We estimate the first term by  \eqref{eq:composition-first-layer-check}, \eqref{eq:eta-high-lambda} and Lemma~\ref{lem:mild-time-increments}:
\[
\begin{aligned}
&s^{\omega_\alpha}
  \frac{\|\bigl(F(u(t))-F(u(s))\bigr)\para\sI\eta_R^>(t)\|_{L^\infty(\rho_{D_1})}}
       {|t-s|^{\alpha/2}} \\
&\qquad\lesssim
  \|F(u)\|_{C_T^{\alpha/2,\omega_\alpha}L^\infty(\rho_{D_\alpha})}
  \|\sI\eta_R^>\|_{L_T^\infty L^\infty(\rho_{-\sigma_\eta})}
  \lesssim
  2^{-(2\alpha-1-\kappa-2\varepsilon)R}U_\alpha^{\rm tot}.
\end{aligned}
\]
The second term is controlled by \eqref{eq:eta-high-alpha-time} and boundedness of \(F\):
\[
\begin{aligned}
&s^{\omega_\alpha}
  \frac{\|F(u(s))\para(\sI\eta_R^>(t)-\sI\eta_R^>(s))\|_{L^\infty(\rho_{D_1})}}
       {|t-s|^{\alpha/2}}
  \lesssim_T
  \|F(u)\|_{L_T^\infty L^\infty}
  \|\sI\eta_R^>\|_{C_T^{\alpha/2,0}L^\infty(\rho_{s_0})}
  \lesssim 1.
\end{aligned}
\]
Here we used
\(
  s_0<D_1,D_\alpha-\sigma_\eta<D_1,
\)
by \eqref{eq:initial-weight-margin} and \eqref{eq:first-level-high-product-margin}.  Combining the two product estimates with \eqref{eq:comm-low-product-detail} for the spatial part gives
\begin{equation}\label{eq:singular-ansatz-bound}
  \|F(u)\para\sI(\eta_R^>)\|_{\mathbb S_T^{\alpha,\omega_\alpha;\alpha/2}(\rho_{D_\alpha})}
  \leqslant C+C2^{-(2\alpha-1-\kappa-2\varepsilon)R}U_\alpha^{\rm tot}.
\end{equation}

Combining  with the  estimates above gives
\begin{equation}\label{eq:u1-high-before-absorb}
  \|u_1^\sharp\|_{\mathbb S_T^{r,\gamma;\alpha/2}(\rho_{D_1})}
  \leqslant C\Bigl(1+2^{-(2\alpha-1-\kappa-2\varepsilon)R}U_\alpha^{\rm tot}
  +2^{-(\alpha-2\kappa-3\varepsilon)R_1}\Bigr),
\end{equation}
and
\begin{equation}\label{eq:u1-alpha-direct}
  \|u_1^\sharp\|_{\mathbb S_T^{\alpha,\omega_\alpha;\alpha/2}(\rho_{D_\alpha})}
  \leqslant C\Bigl(1+2^{-(2\alpha-1-\kappa-2\varepsilon)R}U_\alpha^{\rm tot}
  +2^{-(\alpha-2\kappa-3\varepsilon)R_1}\Bigr).
\end{equation}

\smallskip
\noindent\emph{Closing the ansatz.}
  Finally, by the ansatz and the interpolation estimate \eqref{eq:Ualpha-def},
\begin{equation*}
  U_\alpha^{\rm tot}
  \leqslant
  \|F(u)\para\sI(\eta_R^>)\|_{\mathbb S_T^{\alpha,\omega_\alpha;\alpha/2}(\rho_{D_\alpha})}
  +\|u_1^\sharp\|_{\mathbb S_T^{\alpha,\omega_\alpha;\alpha/2}(\rho_{D_\alpha})}
  +U_\alpha .
\end{equation*}
Using \eqref{eq:u1-alpha-direct} and \eqref{eq:singular-ansatz-bound}  gives
\begin{equation}\label{eq:Utot-ansatz-bound}
  U_\alpha^{\rm tot}
  \leqslant C\Bigl(1+U_\alpha+2^{-(2\alpha-1-\kappa-2\varepsilon)R}U_\alpha^{\rm tot}
  +2^{-(\alpha-2\kappa-3\varepsilon)R_1}\Bigr).
\end{equation}
  Let $C_*$ be the constant multiplying the last two terms in \eqref{eq:Utot-ansatz-bound}.
  Choose deterministic  $R_\eta,R_\Psi\geqslant1$ so large that
\begin{equation}\label{eq:fixed-floor-cutoffs}
  C_*2^{-(2\alpha-1-\kappa-2\varepsilon)R_\eta}\leqslant\frac14,
  \qquad
  C_*2^{-(\alpha-2\kappa-3\varepsilon)R_\Psi}\leqslant\frac14.
\end{equation}
  For all $R\geqslant R_\eta$ and $R_1\geqslant R_\Psi$, the first inequality in \eqref{eq:fixed-floor-cutoffs} absorbs the term containing $U_\alpha^{\rm tot}$.  Since $2^{-(\alpha-2\kappa-3\varepsilon)R_1}\leqslant1$, we obtain \eqref{eq:u-alpha}.
Substituting \eqref{eq:u-alpha} into \eqref{eq:u1-high-before-absorb} gives
 \eqref{eq:u1-r}.
\end{proof}

\section{Weighted transport representation}\label{sec:transport-representation}

We next derive a weighted version of the transport representation and estimate different components in different regularity spaces.
We follow the notation of the finite-volume transport representation \cite[Lemma~4.1]{SZZ24}.

We use the notation
\begin{equation*}
  u_1:=F(u)\para\sI(\eta_R^>),
  \qquad u=u_1+u^\sharp_R,
  \qquad u^\sharp_R=u_1^\sharp+w .
\end{equation*}
Let \(K_i\) be the kernel of \(\Delta_i\) and let \(K_{<i-1}\) be the kernel of \(S_{i-1}=\sum_{j<i-1}\Delta_j\).  As in \cite{SZZ24}, write
\begin{equation*}
  \cR(u)=\sum_{i\geqslant-1}(\Delta_i F(u)- S_{i-1} F'(u)\Delta_iu)
  =\sum_{i\geqslant-1}v_i,
\end{equation*}
where
\begin{equation*}
\begin{aligned}
 v_i(x)=\iint K_i(x-y)K_{<i-1}(x-z)
 \bigl[&F(u(y))-F(u(z))\\
 &-F'(u(z))(u(y)-u(z))\bigr] \,\mathrm{d}y\,\mathrm{d}z .
\end{aligned}
\end{equation*}
Put \(\delta_{yz}f=f(y)-f(z)\).  Then
\begin{equation*}
  v_i=I_{1,i}+I_{2,i},
\end{equation*}
with
\begin{equation*}
\begin{aligned}
I_{1,i}(x)=\iint K_i(x-y)K_{<i-1}(x-z)&
\left[\int_0^1F'(u(z)+\tau\delta_{yz}u)\,\mathrm{d}\tau-F'(u(z))\right]\\
&\times\delta_{yz}u^\sharp_R\,\mathrm{d}y\,\mathrm{d}z,
\end{aligned}
\end{equation*}
\begin{equation}\label{eq:I2-def}
\begin{aligned}
I_{2,i}(x)=\iint K_i(x-y)K_{<i-1}(x-z)&
\int_0^1\!\int_0^1
\tau F''(u(z)+\tau r\delta_{yz}u)\,\mathrm{d}r\,\mathrm{d}\tau\\
&\times \delta_{yz}u\,\delta_{yz}u_1\,\mathrm{d}y\,\mathrm{d}z .
\end{aligned}
\end{equation}

Next split \(I_{1,i}\) according to
\begin{equation*}
  I_{1,i}=I_{111,i}+I_{112,i}+I_{12,i},
\end{equation*}
where
\begin{equation}\label{eq:I111-def}
\begin{aligned}
I_{111,i}(x)=\iint K_i(x-y)K_{<i-1}(x-z)&
\left[\int_0^1F'(u(z)+\tau\delta_{yz}u)\,\mathrm{d}\tau-F'(u(z))\right]\\
&\times\bigl(u_1^\sharp(y)-u_1^\sharp(z)-\nabla u_1^\sharp(z)\cdot(y-z)\bigr)\,\mathrm{d}y\,\mathrm{d}z,
\end{aligned}
\end{equation}
\begin{equation}\label{eq:I112-def}
\begin{aligned}
I_{112,i}(x)=\iint K_i(x-y)K_{<i-1}(x-z)&
\left[\int_0^1F'(u(z)+\tau\delta_{yz}u)\,\mathrm{d}\tau-F'(u(z))\right]\\
&\times\bigl(w(y)-w(z)-\nabla w(z)\cdot(y-z)\bigr)\,\mathrm{d}y\,\mathrm{d}z,
\end{aligned}
\end{equation}
\begin{equation*}
\begin{aligned}
I_{12,i}(x)=\iint K_i(x-y)K_{<i-1}(x-z)&
\left[\int_0^1F'(u(z)+\tau\delta_{yz}u)\,\mathrm{d}\tau-F'(u(z))\right]\\
&\times (y-z)\cdot \nabla u^\sharp_R(z)\,\mathrm{d}y\,\mathrm{d}z .
\end{aligned}
\end{equation*}
These are the same four groups \(I_2,I_{111},I_{112},I_{12}\) as in   \cite[Lemma~4.1]{SZZ24}.

We now define the transport coefficient:
\begin{equation*}
  G_{1,i}(x)
  :=\iint K_i(x-y)K_{<i-1}(x-z)
      A_1(y,z)(y-z)\,\mathrm{d}y\,\mathrm{d}z,
\end{equation*}
\begin{equation*}
  G_{2,i}(x)
  :=\iint K_i(x-y)K_{<i-1}(x-z)
      A_2(y,z)(y-z)\,\mathrm{d}y\,\mathrm{d}z,
\end{equation*}
where
\begin{align}
A_1(y,z)
&:=\int_0^1
\Bigl(
F'\bigl(u(z)+\tau\delta_{yz}u\bigr)
-F'\bigl(u(z)+\tau\delta_{yz}w\bigr)
\Bigr)\,\mathrm{d}\tau, \notag\\
A_2(y,z)
&:=\int_0^1
\Bigl(
F'\bigl(u(z)+\tau\delta_{yz}w\bigr)-F'(u(z))
\Bigr)\,\mathrm{d}\tau .
\label{eq:A2-def}
\end{align}
Finally set
\begin{equation*}
  G_1:=\sum_{i\geqslant-1}G_{1,i},
  \qquad
  G_2:=\sum_{i\geqslant-1}G_{2,i},
  \qquad
  G(u):=G_1+G_2.
\end{equation*}
The transport error is defined by
\begin{equation*}
  \cE_R
  :=\sum_i \Big(I_{2,i}\res\eta_R^>+I_{111,i}\res\eta_R^>+I_{112,i}\res\eta_R^>\Big)+\cE_R^{\rm fr},
\end{equation*}
where
\begin{equation}\label{eq:E-freeze-def}
\begin{aligned}
\cE_R^{\rm fr}(x)
 :=\sum_{i}\sum_{k\sim \ell,\,k\sim i}
 \Bigl(\Delta_k I_{12,i}(x)
       -\nabla u^\sharp_R(x)\cdot\Delta_k(G_{1,i}+G_{2,i})(x)\Bigr)
 \Delta_\ell\eta_R^>(x).
\end{aligned}
\end{equation}
Equivalently, using the kernels, the bracket in \eqref{eq:E-freeze-def} is
\begin{equation}\label{eq:E-freeze-kernel}
\begin{aligned}
&\int K_k(x-x')\iint K_i(x'-y)K_{<i-1}(x'-z)
      \bigl(A_1(y,z)+A_2(y,z)\bigr) \\
&\qquad\qquad\qquad\qquad\times (y-z)\cdot
      \bigl(\nabla u^\sharp_R(z)-\nabla u^\sharp_R(x)\bigr)
      \,\mathrm{d}y\,\mathrm{d}z\,\mathrm{d}x'.
\end{aligned}
\end{equation}
With these definitions the transport representation is
\begin{equation}\label{eq:transport-identity}
\begin{aligned}
  \cR(u)\res\eta_R^>
  &=B_R(u)\cdot\nabla w+B_R(u)\cdot\nabla u_1^\sharp+
  \cE_R,
  \\
   B_R(u)&:=G(u)\res\eta_R^>.
\end{aligned}
\end{equation}

Now we estimate each term above.

Recall all the parameters in Definition~\ref{def:kappa-admissible-weights},
as well as $r,\gamma$ in \eqref{eq:s-r-gamma}.
We first introduce the following weight parameter
 \[
  D_\varepsilon:=\left(1-\frac{1+\varepsilon}{s}\right)D_L+
  \frac{1+\varepsilon}{s}D .
\]
 By Lemma \ref{lem:interp},
\begin{equation}\label{eq:K-def}
  \norm{w}_{C_T^{1+\varepsilon,\gamma \frac{1+\varepsilon}{s}}(\rho_{D_\varepsilon})}
  \lesssim L^{1-\frac{1+\varepsilon}{s}}S^{\frac{1+\varepsilon}{s}}=:K.
\end{equation}

\begin{lemma}\label{lem:G-bounds}
It holds that, uniformly in $t\in [0,T]$,
\begin{equation}\label{eq:G1-bound-detailed}
  \norm{G_1(t)}_{C^{2-\kappa}(\rho_{d_0})}
  \leqslant C,
\end{equation}
\begin{equation}\label{eq:G2-bound-detailed}
  \norm{G_2(t)}_{C^{1+(\kappa+5\varepsilon)}(\rho_{(\kappa+5\varepsilon) D_\varepsilon})}
  \leqslant C\norm{w(t)}_{C^{1+\varepsilon}(\rho_{D_\varepsilon})}^{\kappa+5\varepsilon}.
\end{equation}
Consequently, with
\begin{equation*}
  D_{\rm dr}:=(\kappa+5\varepsilon) D_\varepsilon+s_0,
  \qquad
  \omega_B:=\gamma(\kappa+5\varepsilon) \frac{1+\varepsilon}{s},
\end{equation*}
we have
\begin{equation}\label{eq:B-time-bound}
  \sup_{0<t\leqslant T}t^{\omega_B}
  \norm{B_R(u)(t)}_{C^\varepsilon(\rho_{D_{\rm dr}})}
  \leqslant C(1+K^{\kappa+5\varepsilon}).
\end{equation}
\end{lemma}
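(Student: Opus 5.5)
We estimate $G_{1}$ and $G_{2}$ block by block, using the kernel representation together with the localization of $K_i$ and $K_{<i-1}$. The bound for $B_R(u)$ then follows from a resonant-product estimate.

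\emph{Bound for $G_2$.} Write $\lambda:=\kappa+5\varepsilon\in(0,1)$. Since $F\in C_b^2$, the bracket in \eqref{eq:A2-def} satisfies
\[
|A_2(y,z)|\lesssim |\delta_{yz}w|\wedge 1\leqslant |\delta_{yz}w|^{\lambda}.
\]
By the mean-value theorem, $|\delta_{yz}w|\lesssim \rho_{D_\varepsilon}^{-1}$ evaluated near $x$, times $\|w\|_{C^{1+\varepsilon}(\rho_{D_\varepsilon})}|y-z|$. For $|x-y|,|x-z|\lesssim 2^{-i}$ the weights at $y,z$ are comparable to the weight at $x$, up to the polynomial tails of the kernels, which are absorbed by the Schwartz decay. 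Hence
\[
|G_{2,i}(x)|\lesssim \rho_{\lambda D_\varepsilon}(x)^{-1}\,\|w\|^\lambda_{C^{1+\varepsilon}(\rho_{D_\varepsilon})}\,2^{-i(1+\lambda)}.
\]
The same estimate holds for $\Delta_k G_{2,i}$ with $k\lesssim i$. For $k\gg i$, the Fourier support of $G_{2,i}$ is not localized, so we proceed as in \cite[Lemma~4.1]{SZZ24}. We exchange derivatives using the H\"older regularity of $A_2$ in $(y,z)$, with exponent at most $\lambda$, which comes from the $C^{1+\varepsilon}$ control of $w$ and the boundedness of $F''$. This gives $\|\Delta_k G_2\|_{L^\infty(\rho_{\lambda D_\varepsilon})}\lesssim 2^{-k(1+\lambda)}\|w\|^\lambda$, and hence \eqref{eq:G2-bound-detailed}. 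An alternative is to verify the H\"older characterization of $C^{1+\lambda}(\rho)$ directly: check that $\nabla G_2$ is $\lambda$-H\"older with the same weight, using $|A_2(y,z)-A_2(y',z')|\lesssim$ a $\lambda$-power of the increments of $w$.

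\emph{Bound for $G_1$.} By $F\in C^2_b$,
\[
|A_1(y,z)|\lesssim |\delta_{yz}(u-w)|=|\delta_{yz}(u_1+u_1^\sharp)|.
\]
Proposition~\ref{prop:first-layer}, through \eqref{eq:u1-low} and the bound \eqref{eq:comm-low-product-detail} for $u_1$, controls $u_1+u_1^\sharp$ in $C_TC^{1-\kappa}(\rho_{d_0})$ uniformly, with no dependence on $w$. Here we use $D_0<d_0$ and $s_0<d_0$. Thus $|\delta_{yz}(u-w)|\lesssim \rho_{d_0}^{-1}|y-z|^{1-\kappa}$, and the extra factor $(y-z)$ gives the block bound $2^{-i(2-\kappa)}$ in weight $\rho_{d_0}$. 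A regularity bound on $A_1$ is also needed: writing it as an integral of $F''$ times $\delta_{yz}(u-w)$, the same argument as for $G_2$ yields \eqref{eq:G1-bound-detailed}.

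\emph{Bound for $B_R$.} Since $G_1\in C^{2-\kappa}(\rho_{d_0})\subset C^{1+\lambda}(\rho_{d_0})$ and $d_0\leqslant \lambda D_\varepsilon$ for small $\mu_{\rm wt}$, we have
\[
\|G(t)\|_{C^{1+\lambda}(\rho_{\lambda D_\varepsilon})}\lesssim 1+\|w(t)\|^\lambda_{C^{1+\varepsilon}(\rho_{D_\varepsilon})}.
\]
To see $d_0\leqslant\lambda D_\varepsilon$, note that $D_\varepsilon\gtrsim \mu_{\rm wt}^{1/2}\gg 2\mu_{\rm wt}$. Lemma~\ref{lem:para}, the resonant estimate, combines this with $\eta_R^>\in C^{-1-\kappa}(\rho_{s_0})$ from \eqref{eq:eta-high-lambda}. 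The regularity sum is $1+\lambda-1-\kappa=5\varepsilon>\varepsilon$, and the weights add to $\lambda D_\varepsilon+s_0=D_{\rm dr}$. Multiplying by $t^{\omega_B}$ and using \eqref{eq:K-def} in the form $t^{\gamma(1+\varepsilon)/s}\|w(t)\|_{C^{1+\varepsilon}(\rho_{D_\varepsilon})}\leqslant K$ gives \eqref{eq:B-time-bound}.

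The main obstacle is the high-frequency part ($k\gg i$) of the $C^{1+\lambda}$ bound for $G_2$. Making the fractional power $\lambda$ of the $C^{1+\varepsilon}$ norm work together with the weights requires interpolating between $|A_2|\lesssim 1$ and $|A_2|\lesssim|\delta w|$ at the level of H\"older increments of $A_2$, not only its size.
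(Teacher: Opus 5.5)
\textbf{Verdict: genuine gap.} Your block bounds agree with the paper: $|A_1|\lesssim|\delta_{yz}(u_1+u_1^\sharp)|$ and $|A_2|\lesssim|\delta_{yz}w|^{\lambda}$, with the weight transferred to $x$ through the kernel tails. Your passage to \eqref{eq:B-time-bound} is also fine: the embedding into $C^{1+\lambda}(\rho_{\lambda D_\varepsilon})$, the resonant estimate against $\eta_R^>\in C^{-1-\kappa}(\rho_{s_0})$, and $t^{\omega_B}\|w(t)\|^\lambda\leqslant K^\lambda$. The gap is the step you call the main obstacle, and it rests on a false claim.

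\textbf{The Fourier support of $G_{j,i}$ is localized.} In $G_{j,i}(x)=\iint K_i(x-y)K_{<i-1}(x-z)A_j(y,z)(y-z)\,\mathrm{d}y\,\mathrm{d}z$, the variable $x$ enters only through the two kernels. Write $y-z=(y-x)+(x-z)$. Each piece then has the form $\iint k(x-y)\ell(x-z)B(y,z)\,\mathrm{d}y\,\mathrm{d}z$, where $\hat k$ is supported in the annulus $2^i\mathcal A$ and $\hat\ell$ in a ball of radius $\lesssim 2^{i-1}$. Multiplying a kernel by $x$ differentiates its multiplier and does not enlarge its support. Pair with a test function whose Fourier transform vanishes on the set of sums $\xi+\zeta$ of these two supports. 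This shows that $\widehat{G_{j,i}}$ lies in an annulus of size $2^i$, whatever $A_j$ is. Hence $\Delta_kG_{j,i}=0$ unless $k\sim i$. The dyadic $L^\infty$ bounds $2^{-(2-\kappa)i}$ in $\rho_{d_0}$ and $2^{-(1+\lambda)i}\|w\|^\lambda$ in $\rho_{\lambda D_\varepsilon}$ then give \eqref{eq:G1-bound-detailed} and \eqref{eq:G2-bound-detailed} directly. This is the paper's argument, and it uses no regularity of $A_1$ or $A_2$.

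\textbf{Why your remedy would not prove the lemma as stated.} $A_2(y,z)$ depends on $u(z)$. So $A_2(y,z)-A_2(y,z')$ contains
\[
F'(u(z)+\tau\delta_{yz}w)-F'(u(z))-F'(u(z')+\tau\delta_{yz'}w)+F'(u(z')).
\]
With $F\in C_b^2$, making this small in $|z-z'|$ requires increments of $u$, i.e. $\|u\|_{C^\alpha(\rho_{D_\alpha})}\lesssim1+U_\alpha$. It is not a $\lambda$-power of increments of $w$ alone. Likewise, the regularity of $A_1$ involves increments of both $u$ and $w$. That route therefore brings in factors of $U_\alpha$ (and $K$), which breaks the $w$-only right-hand side of \eqref{eq:G2-bound-detailed}. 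It also breaks the uniform constant in \eqref{eq:G1-bound-detailed}, and that uniformity is what keeps $G_1$ out of the later scalar closure.

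Your alternative of checking the H\"older characterization of $C^{1+\lambda}(\rho)$ directly would work, but only because $x$-derivatives fall on the kernels. That gives $\|\partial^mG_{j,i}\|_{L^\infty(\rho)}\lesssim 2^{(|m|-\beta)i}$ for the relevant $\beta$, again with no increments of $A_j$.
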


\begin{proof}
For \(G_{1,i}\), we have
\[
  |A_1(y,z)|
  \lesssim |\delta_{yz}(u_1+u_1^\sharp)|.
\]
It follows that \(u_1=F(u)\para\sI(\eta_R^>)\in C^{1-\kappa}(\rho_{s_0})\), while \(s_0<d_0\) and \eqref{eq:u1-low} give
\[
  \norm{u_1+u_1^\sharp}_{C^{1-\kappa}(\rho_{d_0})}
  \leqslant C.
\]
 Since the kernels are localized at scale \(2^{-i}\),
\begin{equation}\label{eq:G1i-dyadic}
  \norm{G_{1,i}}_{L^\infty(\rho_{d_0})}
  \lesssim
  2^{(-2+\kappa)i}.
\end{equation}
The Fourier support of \(G_{1,i}\) is contained in a fixed annulus of size \(2^i\).  Hence \eqref{eq:G1i-dyadic} implies \eqref{eq:G1-bound-detailed}.

For \(G_{2,i}\),  \(A_2\) in \eqref{eq:A2-def} satisfies, by boundedness of \(F'\) and \(F''\),
\[
  |A_2(y,z)|\lesssim \min\{|\delta_{yz}w|,1\}
  \lesssim |\delta_{yz}w|^{\kappa+5\varepsilon}
  \lesssim
  \norm{w}_{C^{1+\varepsilon}(\rho_{D_\varepsilon})}^{\kappa+5\varepsilon}\sup_{\theta\in[0,1]}
  \rho_{(\kappa+5\varepsilon)D_\varepsilon}(z+\theta(y-z))^{-1}|y-z|^{\kappa+5\varepsilon}.
\]
 For a fixed large \(N\),
\[
  \rho_{D_\varepsilon}(z+\theta(y-z))^{-1}
  \lesssim
  \rho_{D_\varepsilon}(x)^{-1}
  \la (x-y)\ra^N\la (x-z)\ra^N .
\]
The two polynomial factors are absorbed by the finite moments of the kernels.
The extra factor \((y-z)\) gives
\begin{equation*}
  \norm{G_{2,i}}_{L^\infty(\rho_{(\kappa+5\varepsilon) D_\varepsilon})}
  \lesssim
  2^{(-1-(\kappa+5\varepsilon))i}
  \norm{w}_{C^{1+\varepsilon}(\rho_{D_\varepsilon})}^{\kappa+5\varepsilon},
\end{equation*}
which gives \eqref{eq:G2-bound-detailed}.

Moreover, we have
\begin{equation}\label{eq:G1-weight-margin}
  d_0+s_0<D_{\rm dr},
  \qquad D_{\rm dr}+D_\varepsilon<D .
\end{equation}
Indeed,
\[
  \frac{D_\varepsilon}{D_L}
  =1-\frac{1+\varepsilon}{s}+\frac{1+\varepsilon}{s\ell},
\]
so \(D_\varepsilon\geqslant c_\kappa D_L\) uniformly for the fixed admissible parameters, whereas \(d_0+s_0=O(\mu_{\rm wt})=o(D_L)\).  This gives \(d_0+s_0<D_{\rm dr}\).  For the second inequality, divide by \(D\) and use \(D_L=\ell D\): up to the perturbative term \(s_0/D=o(1)\), it is enough to have
\[
  \bigl(1+(\kappa+5\varepsilon)\bigr)
  \left(\left(1-\frac{1+\varepsilon}{s}\right)\ell+\frac{1+\varepsilon}{s}\right)<1.
\]
This is exactly \(\ell<\ell_+\) by \eqref{eq:theta-ell-def}; the strict separation \eqref{eq:ell-window} and a further reduction of the uniform weight bound absorb the term \(s_0\).
Thus the assertion follows from Lemma \ref{lem:para}.
\end{proof}

\begin{proposition}\label{prop:transport-error}
With $R,R_1$ as in Proposition~\ref{prop:first-layer}, it holds that
\begin{equation}\label{eq:transport-error-bound}
\begin{aligned}
  \norm{\cE_R}_{L_T^{\infty,\gamma}L^\infty(\rho_{D_L})}
  \leqslant C\bigl(&
  1+2^{-(2\alpha-1-\kappa-2\varepsilon)R}U_\alpha
 +2^{-(1-\kappa-\varepsilon)R}S\bigr).
\end{aligned}
\end{equation}
\end{proposition}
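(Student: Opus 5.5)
The plan is to estimate the four pieces of $\cE_R$ separately: $I_2\res\eta_R^>$, $I_{111}\res\eta_R^>$, $I_{112}\res\eta_R^>$ and the frozen-coefficient error $\cE_R^{\rm fr}$. Each is handled at the level of dyadic blocks. Write $\sum_i X_i\res\eta_R^> = \sum_i\sum_{k\sim i,\,\ell\sim k}\Delta_kX_i\,\Delta_\ell\eta_R^>$, bound $\|\Delta_k X_i\|_{L^\infty(\rho_{D'})}$ by kernel computations like those in the proof of Lemma~\ref{lem:G-bounds}, and pair with a suitable regularity/weight realization of $\eta_R^>$ from Proposition~\ref{prop:annular-split}. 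The weights are moved from $z+\theta(y-z)$ to $x$ by the bound $\rho(z+\theta(y-z))^{-1}\lesssim\rho(x)^{-1}\la x-y\ra^N\la x-z\ra^N$, with the moments absorbed by the kernels. In every term the total weight must be strictly below $D_L$, and the $i$-sum must be summable.

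\emph{The term $I_2$.} Here $|\delta_{yz}u\,\delta_{yz}u_1|\lesssim |y-z|^{\alpha}\|u\|_{C^\alpha(\rho_{D_\alpha})}\,|y-z|^{1-\kappa}\|\sI\eta_R^>\|$. I plan to measure $u_1$ with the decaying weight: $\sI\eta_R^>\in C^{2-2\alpha+2\varepsilon}(\rho_{-\sigma_\eta})$, which carries the small factor $2^{-(2\alpha-1-\kappa-2\varepsilon)R}$ by \eqref{eq:eta-high-lambda}. Then $\Delta_kI_{2,i}$ has size roughly $2^{-(2-\alpha+\varepsilon)i}$ in weight $\rho_{D_\alpha-\sigma_\eta}$. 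Pairing with $\eta_R^>\in C^{-1-\kappa}(\rho_{s_0})$ gives a summable exponent. Using $D_\alpha-\sigma_\eta+s_0<D_1+s_0<D_L$ and \eqref{eq:u-alpha}, this term contributes $C(1+2^{-(2\alpha-1-\kappa-2\varepsilon)R}U_\alpha)$, with the time weight $t^{\gamma\alpha/s}\le t^\gamma$ compatible.

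\emph{The term $I_{111}$.} The bracket is bounded by $|\delta_{yz}u|^{\alpha}$-type factors, or by $\min(1,|\delta_{yz}u|)$. The Taylor remainder of $u_1^\sharp$ is bounded by $|y-z|^{r}\|u_1^\sharp\|_{C^r(\rho_{D_1})}$, and since $r>1$ this is well defined. Using only boundedness of $F''$ and the $C^{1-\kappa}(\rho_{d_0})$ bound on $u$, the block size is $2^{-(r+\text{small})i}$, and $r-1-\kappa=7\varepsilon>0$ gives summability against $\eta_R^>\in C^{-1-\kappa}(\rho_{s_0})$. By \eqref{eq:u1-r} the result is $\lesssim 1+2^{-(2\alpha-1-\kappa-2\varepsilon)R}U_\alpha+2^{-(\alpha-2\kappa-3\varepsilon)R_1}$, and the time weight $t^\gamma$ is matched.

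\emph{The term $I_{112}$.} The Taylor remainder of $w$ is bounded by $|y-z|^{s}$ in $C^s(\rho_D)$ times $t^{-\gamma}S$. Here I pair instead with $\eta_R^>\in C^{-2+\varepsilon}(\rho_{-\sigma_{\eta,\varepsilon}})$, which has gain $2^{-(1-\kappa-\varepsilon)R}$. The exponent $s-2+\varepsilon=\varepsilon/2>0$ gives summability, and the weight $D-\sigma_{\eta,\varepsilon}<D_L$ holds by the choice of $\sigma_{\eta,-}$. This produces $2^{-(1-\kappa-\varepsilon)R}S$.

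\emph{The term $\cE_R^{\rm fr}$.} Using the kernel form \eqref{eq:E-freeze-kernel}, $\nabla u^\sharp_R(z)-\nabla u^\sharp_R(x)$ is split into the $u_1^\sharp$ and $w$ parts, treated as in $I_{111}$ and $I_{112}$ respectively. The $u_1^\sharp$ part uses $C^{r}$ with $|x-z|^{r-1}$; the $w$ part uses $C^{s}$ with $|x-z|^{s-1}$ and the $C^{-2+\varepsilon}$ realization of $\eta_R^>$. The coefficient $A_1+A_2$ is only bounded, $|A_1+A_2|\lesssim1$, and the factor $(y-z)$ supplies $2^{-i}$, so the exponents are the same as before.

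The main obstacle I expect is this last term with the $w$ part. Only boundedness of $A$ is available, so the block gain is $2^{-i}\cdot2^{-(s-1)i}$, which is exactly the borderline $2^{-(2-\varepsilon/2)i}$ against $2^{(2-\varepsilon)i}$. I also need to check that the weight $D$ transported through $x'$ still fits under $D_L+\sigma_{\eta,\varepsilon}$. If the exponent is too tight, I would first try to extract a small power from $A$, as in \eqref{eq:G2-bound-detailed}, to gain extra decay.
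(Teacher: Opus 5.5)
Your proposal is correct and follows the paper's proof essentially term by term. You use the same four pieces and the same realizations $C^{-1-\kappa}(\rho_{s_0})$ and $C^{-2+\varepsilon}(\rho_{-\sigma_{\eta,\varepsilon}})$ of $\eta_R^>$ for $I_{111}$, $I_{112}$ and the two freezing parts, with the same weight margins $D_1+s_0<D_L$ and $D-\sigma_{\eta,\varepsilon}<D_L$.

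The only variation is in $I_2$. You take the gain $2^{-(2\alpha-1-\kappa-2\varepsilon)R}$ from $u_1\in C^{2-2\alpha+2\varepsilon}(\rho_{-\sigma_\eta})$ and pair with $\eta_R^>\in C^{-1-\kappa}(\rho_{s_0})$. The paper instead puts both increments in $C^\alpha$ and pairs $I_2\in C^{2\alpha}(\rho_{D_\alpha+s_0})$ with $\eta_R^>\in C^{-(2\alpha-2\varepsilon)}(\rho_{-\sigma_\eta})$. Both give the same weight $D_\alpha-\sigma_\eta+s_0$ and the same gain.

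Two small corrections:
\begin{itemize}
\item The $w$-part of $\cE_R^{\rm fr}$ is not borderline. Its margin is $s-2+\varepsilon=\varepsilon/2>0$, exactly as in your $I_{112}$ step, so no power of $A$ should be extracted; doing so would introduce solution-dependent factors absent from the target bound.
\item In $I_{111}$, simply use that the bracket is bounded. The extra ``small'' gain is unnecessary, and $u$ itself, which contains $w$, has no a priori $C^{1-\kappa}(\rho_{d_0})$ bound (only $u_1+u_1^\sharp$ does).
\end{itemize}
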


\begin{proof}

\smallskip
\noindent\emph{The \(I_2\)-part.}
 In \eqref{eq:I2-def}, the variables \(y\) and \(z\) are localized near \(x\), up to rapidly decaying tails.  Thus, for a fixed large \(N\), the weighted increment bounds can be written schematically as
\[
\begin{aligned}
  |\delta_{yz}u(t)|
  &\lesssim \|u(t)\|_{C^\alpha(\rho_{D_\alpha})}
     \rho_{D_\alpha}(x)^{-1}|y-z|^\alpha
     \la (x-y)\ra^N\la (x-z)\ra^N,\\
  |\delta_{yz}u_1(t)|
  &\lesssim \|u_1(t)\|_{C^\alpha(\rho_{s_0})}
     \rho_{s_0}(x)^{-1}|y-z|^\alpha
     \la (x-y)\ra^N\la (x-z)\ra^N .
\end{aligned}
\]
The polynomial factors are absorbed by the finite moments of \(K_i\) and \(K_{<i-1}\).
while the two \(\alpha\)-increments give the factor \(2^{-2\alpha i}\).  Using  \(\|u_1\|_{L_T^\infty C^\alpha(\rho_{s_0})}\lesssim1\), we get
\begin{equation*}
\begin{aligned}
  \|I_{2,i}(t)\|_{L^\infty(\rho_{D_\alpha+s_0})}
  &\lesssim 2^{-2\alpha i}
    \|u(t)\|_{C^\alpha(\rho_{D_\alpha})}
    \|u_1(t)\|_{C^\alpha(\rho_{s_0})}  \\
  &\lesssim 2^{-2\alpha i}\bigl(1+\|u(t)\|_{C^\alpha(\rho_{D_\alpha})}\bigr).
\end{aligned}
\end{equation*}
Using \eqref{eq:eta-high-lambda} and \eqref{eq:res-est},we obtain
\begin{equation*}
\begin{aligned}
  t^\gamma\|\sum_iI_{2,i}\circ \eta_R^>\|_{L^\infty(\rho_{D_\alpha+s_0-\sigma_\eta})}
  &\lesssim
  2^{-(2\alpha-1-\kappa-2\varepsilon)R}\bigl(1+U_\alpha\bigr).
\end{aligned}
\end{equation*}
By \eqref{eq:first-level-high-product-margin} we get \(D_\alpha-\sigma_\eta<D_1\), and  \eqref{eq:initial-weight-margin} gives \(D_1+s_0<D_L\).  Therefore

\begin{equation}\label{eq:E2-NL-NS-detailed}
  \|\sum_iI_{2,i}\circ \eta_R^>\|_{L_T^{\infty,\gamma}L^\infty(\rho_{D_L})}
  \lesssim 1+2^{-(2\alpha-1-\kappa-2\varepsilon)R}U_\alpha .
\end{equation}

\smallskip
\noindent\emph{The Taylor remainder \(I_{111,i}\).}
For \(I_{111,i}\) in \eqref{eq:I111-def}, the bracket
\(u_1^\sharp(y)-u_1^\sharp(z)-\nabla u_1^\sharp(z)\cdot(y-z)\)
is controlled by the \(C^r(\rho_{D_1})\)-norm of \(u_1^\sharp\).  More explicitly,
\[
\begin{aligned}
&u_1^\sharp(y)-u_1^\sharp(z)-\nabla u_1^\sharp(z)\cdot(y-z) \\
&\qquad =\int_0^1
   \bigl(\nabla u_1^\sharp(z+\theta(y-z))-\nabla u_1^\sharp(z)\bigr)\cdot(y-z)\,\mathrm{d}\theta .
\end{aligned}
\]
The weighted H\"older characterization therefore gives, for some fixed large \(N\),
\[
\begin{aligned}
&\rho_{D_1}(x)
 \bigl|u_1^\sharp(y)-u_1^\sharp(z)-\nabla u_1^\sharp(z)\cdot(y-z)\bigr| \\
&\qquad\lesssim
 \|u_1^\sharp\|_{C^r(\rho_{D_1})}|y-z|^r
 \la (x-y)\ra^N\la (x-z)\ra^N .
\end{aligned}
\]
 Hence
\[
\begin{aligned}
\rho_{D_1}(x)|I_{111,i}(t,x)|
&\lesssim \|u_1^\sharp(t)\|_{C^r(\rho_{D_1})}
\iint |K_i(x-y)|\,|K_{<i-1}(x-z)|\,|y-z|^r \\
&\qquad\qquad\times
 \la (x-y)\ra^N\la (x-z)\ra^N\,\mathrm{d}y\,\mathrm{d}z \\
&\lesssim 2^{-ri}\|u_1^\sharp(t)\|_{C^r(\rho_{D_1})}.
\end{aligned}
\]
Since \(I_{111,i}\) has Fourier support in a fixed annulus of size \(2^i\), this is exactly the dyadic \(C^r(\rho_{D_1})\)-bound.
 Therefore, by \eqref{eq:eta-high-lambda}, \eqref{eq:res-est} and Proposition~\ref{prop:first-layer}, we have
\begin{equation}\label{eq:E111-detailed}
\begin{aligned}
  t^\gamma\|\sum_iI_{111,i}\circ \eta_R^>\|_{L^\infty(\rho_{D_1+s_0})}
  &\lesssim
  t^\gamma\|u_1^\sharp(t)\|_{C^r(\rho_{D_1})}\|
  \eta_R^>\|_{C^{-1-\kappa}(\rho_{s_0})}  \\
  &\lesssim
  1+2^{-(2\alpha-1-\kappa-2\varepsilon)R}\bigl(1+U_\alpha\bigr)+2^{-(\alpha-2\kappa-3\varepsilon)R_1}.
\end{aligned}
\end{equation}
Here we used \(r-1-\kappa=7\varepsilon>0\) and \(D_1+s_0<D_L\) by \eqref{eq:initial-weight-margin}.

\smallskip
\noindent\emph{The remainder Taylor term \(I_{112,i}\).}
For \(I_{112,i}\) in \eqref{eq:I112-def}, arguing as in the estimate for \(I_{111,i}\) gives
\begin{equation*}
  \|I_{112,i}(t)\|_{L^\infty(\rho_D)}
  \lesssim 2^{-si}\|w(t)\|_{C^s(\rho_D)}.
\end{equation*}
By \eqref{eq:eta-high-alpha-time} and
\eqref{eq:res-est}, we have
\begin{equation}\label{eq:E112-detailed}
  t^\gamma\|\sum_iI_{112,i}\circ \eta_R^>(t)\|_{L^\infty(\rho_{D-\sigma_{\eta,\varepsilon}})}
  \lesssim 2^{-(1-\kappa-\varepsilon)R}S,
\end{equation}
where we used
\begin{equation}\label{eq:transport-error-weight-margin}
  D-\sigma_{\eta,\varepsilon}<D_L .
\end{equation}
Indeed, from the midpoint choice of \(\sigma_\eta\) and the lower endpoint in \eqref{eq:sigma-eta-endpoints}, with \(A=2\alpha-1-\kappa-2\varepsilon\) and \(B=1-\kappa-\varepsilon\),
\[
  \sigma_\eta>\frac{A}{B}(D-D_L+s_0)-s_0.
\]
Therefore
\[
  \frac{B}{A}(s_0+\sigma_\eta)-s_0>D-D_L.
\]
The left-hand side is \(\sigma_{\eta,\varepsilon}\), so \eqref{eq:transport-error-weight-margin} follows.

\smallskip
\noindent\emph{The freezing-gradient error.}
 We split
\[
  \nabla u^\sharp_R(z)-\nabla u^\sharp_R(x)
  =\bigl(\nabla u_1^\sharp(z)-\nabla u_1^\sharp(x)\bigr)
   +\bigl(\nabla w(z)-\nabla w(x)\bigr).
\]
We denote by $\cE_R^{{\rm fr},u_1^\sharp}$ and $\cE_R^{{\rm fr},w}$ the corresponding two parts of $\cE_R^{\rm fr}$ generated by this decomposition.
  By \eqref{eq:E-freeze-kernel}, boundedness of \(F'\), \(F''\), and  \(k\sim\ell\sim i\), the $L^\infty(\rho_{D_1+s_0})$ norm of the part with \(u_1^\sharp\) is bounded by
\begin{equation*}
\begin{aligned}
  2^{-(r-1-\kappa)i}
  \|u_1^\sharp\|_{C^r(\rho_{D_1})}
  \|\eta_R^>\|_{C^{-1-\kappa}(\rho_{s_0})} .
\end{aligned}
\end{equation*}
The factor \(2^{-(r-1-\kappa)i}\) is summable since \(r-1-\kappa=7\varepsilon>0\).  After multiplying by \(t^\gamma\), using Proposition~\ref{prop:first-layer}, \eqref{eq:eta-high-lambda}, and \(D_1+s_0<D_L\) from \eqref{eq:initial-weight-margin}, this gives
\begin{equation*}
  \|\cE_R^{{\rm fr},u_1^\sharp}\|_{L_T^{\infty,\gamma}L^\infty(\rho_{D_L})}
  \lesssim 1+2^{-(2\alpha-1-\kappa-2\varepsilon)R}\bigl(1+U_\alpha\bigr)+2^{-(\alpha-2\kappa-3\varepsilon)R_1}.
\end{equation*}

For the $L^\infty(\rho_{D-\sigma_{\eta,\varepsilon}})$ norm of the part with \(w\), the same kernel identity \eqref{eq:E-freeze-kernel} gives it bounded by
\begin{equation*}
\begin{aligned}
  2^{-(s-2+\varepsilon)i}
  \|w\|_{C^s(\rho_D)}
  \|\eta_R^>\|_{C^{-2+\varepsilon}(\rho_{-\sigma_{\eta,\varepsilon}})} .
\end{aligned}
\end{equation*}
Here \(s-2+\varepsilon=\varepsilon/2>0\), so the dyadic sum is summable.  Using  \eqref{eq:eta-high-alpha-time} and \eqref{eq:transport-error-weight-margin}, we obtain
\begin{equation*}
  \|\cE_R^{{\rm fr},w}\|_{L_T^{\infty,\gamma}L^\infty(\rho_{D_L})}
  \lesssim 2^{-(1-\kappa-\varepsilon)R}S.
\end{equation*}
\smallskip
Adding \eqref{eq:E2-NL-NS-detailed}, \eqref{eq:E111-detailed}, \eqref{eq:E112-detailed}, and the two freezing estimates proves \eqref{eq:transport-error-bound}.
\end{proof}

 We now record the weighted parabolic estimate used to control equations with a time-singular transport drift.

\begin{lemma}[Weighted maximum principle with time-singular drift]\label{lem:max}
Let $v$ solve
\begin{equation*}
  \cL v=B\cdot\nabla v+f,
  \qquad v(0)=0.
\end{equation*}
Assume
\begin{equation*}
  A_B:=\sup_{0<t\leqslant T}t^{\omega_B}
  \norm{B(t)}_{L^\infty(\rho_{D_B})}<\infty,
  \qquad 0\leqslant\omega_B<1,\qquad 0<\gamma<1,
  \qquad 0\leqslant D_B<1.
\end{equation*}
Then, one has
\begin{equation*}
  \norm{v}_{L_T^\infty L^\infty(\rho_{D_L})}
  \leqslant C(1+A_B)^{\frac{D_L}{1-D_B}}
  \norm{f}_{L_T^{\infty,\gamma}L^\infty(\rho_{D_L})}.
\end{equation*}
\end{lemma}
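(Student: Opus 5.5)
We will prove the bound by a comparison (maximum-principle) argument applied to a spatially weighted version of $v$, localized in space. The factor $(1+A_B)^{D_L/(1-D_B)}$ comes from balancing two regimes. Where $\langle x\rangle$ is small, the drift is harmless. Where $\langle x\rangle$ is large, we instead use a weight that does not interact with the drift.

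\emph{Step 1: a supersolution.} Fix a scale $\lambda\geqslant1$, to be chosen as $\lambda\simeq(1+A_B)^{1/(1-D_B)}$. Take the radial weight
\[
\phi(x)=\la x/\lambda\ra^{D_L}\simeq \lambda^{-D_L}\la x\ra^{D_L}\ \text{ for } |x|\geqslant\lambda,
\]
which satisfies $\phi\geqslant1$. Since $D_L<1$ is small, we have $|\nabla\phi|\lesssim\lambda^{-1}\la x/\lambda\ra^{-1}\phi$ and $|\Delta\phi|\lesssim\lambda^{-2}\la x/\lambda\ra^{-2}\phi$. The drift contribution is then
\[
|B\cdot\nabla\phi|\lesssim t^{-\omega_B}A_B\la x\ra^{D_B}\lambda^{-1}\la x/\lambda\ra^{-1}\phi .
\]
Using $\la x\ra^{D_B}\la x/\lambda\ra^{-1}\lesssim\lambda^{D_B}$ (valid because $D_B<1$), this is bounded by $t^{-\omega_B}A_B\lambda^{D_B-1}\phi\leqslant t^{-\omega_B}\phi$ for our choice of $\lambda$. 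With $M:=\|f\|_{L_T^{\infty,\gamma}L^\infty(\rho_{D_L})}$, we then set
\[
\Phi(t,x):=M\,a(t)\,\lambda^{D_L}\phi(x),
\]
where $a$ solves $a'=C(t^{-\omega_B}+1)a+t^{-\gamma}$ with $a(0)=0$. Because $\omega_B,\gamma<1$, the function $a$ is bounded on $[0,T]$ by a constant depending only on $T,\omega_B,\gamma$. A direct check gives $\cL\Phi-B\cdot\nabla\Phi\geqslant M t^{-\gamma}\lambda^{D_L}\phi\geqslant |f|$, where the last inequality uses $\lambda^{D_L}\phi\gtrsim\la x\ra^{D_L}$.

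\emph{Step 2: comparison.} Apply the classical maximum principle to $\pm v-\Phi$, which vanishes at $t=0$ and is a subsolution. On the unbounded domain we need to rule out growth at infinity. For this we first use a priori knowledge that $v$ grows at most polynomially; this holds for the classes in which the lemma is applied, and if needed we perturb $\Phi$ by $\delta e^{t}\la x\ra^{N}$ and let $\delta\to0$. The time singularity of $B$ at $t=0$ is handled by working on $[t_0,T]$ and letting $t_0\downarrow0$, using the continuity of $v$. The conclusion is $|v(t,x)|\lesssim M\lambda^{D_L}\phi(x)\lesssim M\lambda^{D_L}\la x\ra^{D_L}$, and hence
\[
\|v\|_{L^\infty_TL^\infty(\rho_{D_L})}\lesssim \lambda^{D_L}M=(1+A_B)^{D_L/(1-D_B)}M .
\]

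\emph{Main obstacle.} The delicate point is the rigorous justification of the comparison on $\mathbb R^2$ with a drift $B$ that is only weighted-bounded and singular in time. We must make sure $B\cdot\nabla v$ makes sense classically, which may require mollifying $B$ and $f$ and then passing to the limit with constants uniform in the mollification. The growth condition at infinity also has to be verified. We expect this to be handled by the $\delta\la x\ra^N$ barrier, taking $N$ larger than the growth of $v$ so that the drift term $\la x\ra^{D_B}\la x\ra^{N-1}$ is still dominated by a factor $e^{Ct^{1-\omega_B}}$ type growth in time.
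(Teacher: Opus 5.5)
Your argument is correct, but it follows a genuinely different route from the paper's.

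\textbf{The paper's route.} The paper argues probabilistically. After reversing time it writes $v(T-t,x)=\mathbb E\int_t^T f(T-s,X_s^{t,x})\,\mathrm{d}s$ for the diffusion driven by the drift $B^T$. It then bounds how far trajectories can travel through a Bihari-type inequality $U_s'\lesssim A_B(T-s)^{-\omega_B}U_s^{D_B}$ for a majorant of $\sup_r\langle X_r\rangle$. This gives $\sup_r\langle X_r\rangle\lesssim \langle x\rangle+W_T^*+(1+A_B)^{1/(1-D_B)}$. The factor $(1+A_B)^{D_L/(1-D_B)}$ then comes from $\rho_{D_L}(x)\,\mathbb E\rho_{D_L}(X_s)^{-1}$ together with moments of $W_T^*$.

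\textbf{Your route.} Your barrier $M a(t)(\lambda^2+|x|^2)^{D_L/2}$ is essentially the PDE dual of this argument. The scale $\lambda=(1+A_B)^{1/(1-D_B)}$, at which $A_B\lambda^{D_B-1}\leqslant 1$, is exactly the maximal drift displacement appearing in the paper. Your ODE for $a$ absorbs the integrable singularity $t^{-\omega_B}$ just as the paper uses $\int(T-r)^{-\omega_B}\,\mathrm{d}r<\infty$.

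\textbf{Checks.} Your computations are sound. The bound $\lambda^{D_L}\phi\geqslant\langle x\rangle^{D_L}$ holds since $\lambda\geqslant1$. The bound $\langle x\rangle^{D_B}\langle x/\lambda\rangle^{-1}\lesssim\lambda^{D_B}$ uses only $D_B<1$. Smallness of $D_L$ is never actually needed.

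\textbf{Trade-offs.} Your route avoids SDE theory altogether. The paper needs weak existence for a drift that is unbounded, singular at the terminal time and only H\"older continuous. It also needs the Feynman--Kac representation for a non-classical $v$, and it imports both from \cite{SZZ24}. The cost on your side is the usual justification of the classical maximum principle on $\mathbb R^2$.
\begin{itemize}
\item You need a priori polynomial growth of $v$, which is available in the application. Your perturbation $\delta e^{g(t)}\langle x\rangle^N$ works precisely because $D_B<1$ makes $|B\cdot\nabla\langle x\rangle^N|\lesssim A_Bt^{-\omega_B}\langle x\rangle^N$.
\item You need a regularization step, since $w$ is only $C^{2-\varepsilon/2}$ in space and $f_R$ is only bounded in time. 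This is comparable to what the paper leaves implicit.
\end{itemize}

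\textbf{A small simplification.} Restarting on $[t_0,T]$ is unnecessary, and it would force you to carry $v(t_0)\neq0$ into the barrier. A strict subsolution cannot have a positive maximum at any $t^*>0$, so the singularities of $B$, $f$ and $a'$ at $t=0$ never enter the argument.
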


\begin{proof}
As in \cite{SZZ24} we apply a probabilistic method, with reversed time.
		For a space-time function $f$, we set
		$$
		f^T(t,x):=f(T-t,x)\;.
		$$
		It follows that $v^T(t,x)=v(T-t,x)$ solves the following backward equation:
		\begin{align*}
			\partial_t v^T+\Delta v^T+B^T\cdot\nabla v^T+f^T=0\;,
		\end{align*}
		with  the final condition
		\begin{align*}
			v^T(T,x)=v(0,x)=0\;.
		\end{align*}
		 For each $(t,x)\in[0,T]\times\mathbb{R}^d$,
		it is well known that the following SDE has a (probabilistically) weak solution starting from $x$ at time $t$
		$$
		X^{t,x}_s=x+\sqrt{2}(W_s-W_t)+\int^s_tB^T(r,X^{t,x}_r)\,\mathrm{d}r\;,\ \ \forall s\in[t,T]\;,
		$$
		where $W$ is a $d$-dimensional Brownian motion on some stochastic basis $(\Omega',\mathcal{F}',\mathbb{P})$.  By the same argument as in \cite{SZZ24}, we obtain
\begin{equation}\label{eq:fk-representation}
  v(T-t,x)=\mathbb E\int_t^T f(T-s,X_s^{t,x})\,\mathrm{d}s .
\end{equation}
Set
\[
  Y_s:=\sup_{t\leqslant r\leqslant s}\langle X_r^{t,x}\rangle,
  \qquad W_s^*:=\sup_{t\leqslant r\leqslant s}|W_r-W_t|.
\]
From the weighted drift bound,
\[
  Y_s\leqslant C(\langle x\rangle+W_s^*)
  +CA_B\int_t^s(T-r)^{-\omega_B}Y_r^{D_B}\,\mathrm{d}r .
\]
With \(B_T^0:=C(\langle x\rangle+W_T^*)\geqslant1\) and
\[
  U_s:=B_T^0+CA_B\int_t^s(T-r)^{-\omega_B}Y_r^{D_B}\,\mathrm{d}r,
\]
one has \(Y_s\leqslant U_s\) and
\[
  U_s'\leqslant CA_B(T-s)^{-\omega_B}U_s^{D_B}.
\]
Therefore
\[
  U_s^{1-D_B}\leqslant (B_T^0)^{1-D_B}
  +CA_B\int_t^s(T-r)^{-\omega_B}\,\mathrm{d}r
  \leqslant (B_T^0)^{1-D_B}+C_TA_B,
\]
and hence
\begin{equation}\label{eq:Y-growth}
  Y_s\leqslant C\bigl(\langle x\rangle+W_T^*+(1+A_B)^{1/(1-D_B)}\bigr).
\end{equation}
Using \eqref{eq:fk-representation}, \eqref{eq:Y-growth}, and finite polynomial moments of \(W_T^*\),
\[
\begin{aligned}
  \rho_{D_L}(x)|v(T-t,x)|
  &\leqslant C\|f\|_{L_T^{\infty,\gamma}L^\infty(\rho_{D_L})}
  \int_t^T(T-s)^{-\gamma}
  \rho_{D_L}(x)\mathbb E\rho_{D_L}(X^{t,x}_s)^{-1}\,\mathrm{d}s\\
  &\leqslant C_T(1+A_B)^{D_L/(1-D_B)}
  \|f\|_{L_T^{\infty,\gamma}L^\infty(\rho_{D_L})}.
\end{aligned}
\]
\end{proof}

\section{Estimates of the remainder terms and the a priori bound}\label{sec:closure}

In this section we close the  estimate of the remainder terms and prove Theorem~\ref{thm:apriori} and Corollary~\ref{cor:apriori-low-w}.

We recall the four scalar quantities that will be used throughout the section.  The two basic sizes of the final remainder, already defined in \eqref{eq:L-S-def}, are
\[
  L=\norm{w}_{L_T^\infty L^\infty(\rho_{D_L})},\qquad
  S=\norm{w}_{\mathbb S_T^{s,\gamma;s/2}(\rho_D)} .
\]
The two interpolation quantities derived from \eqref{eq:Ualpha-def} and \eqref{eq:K-def} are
\[
   \norm{w}_{\mathbb S_T^{\alpha,\gamma\alpha/s;\alpha/2}(\rho_{D_\alpha})}
  \lesssim U_\alpha
  =L^{1-\alpha/s}S^{\alpha/s},
  \qquad
  \norm{w}_{C_T^{1+\varepsilon,\gamma\frac{1+\varepsilon}{s}}(\rho_{D_\varepsilon})}\lesssim K=L^{1-\frac{1+\varepsilon}{s}}S^{\frac{1+\varepsilon}{s}}.
\]

In this section, we concentrate on the equation for $w$.  We first expand the right-hand side of \eqref{eq:w-eq-pre}.
As in \cite[Eq.~(3.3)]{SZZ24}, we obtain
\begin{equation}\label{eq:pc-resonant-expanded}
\begin{aligned}
 &(F'(u)\para u)\res\eta_R^>-H(u)\para\Psi_{R_1}^> \\
&=\underbrace{(F'(u)\para u^\sharp_R)\res\eta_R^>}_{\mathcal Q_R^{u^\sharp_R}}
  +\underbrace{\mathrm{Com}(F'(u),u_1,\eta_R^>)}_{\mathcal Q_R^{\rm com,1}}
  +\underbrace{F'(u)\,\mathrm{Com}(F(u),\sI(\eta_R^>),\eta_R^>)}_{\mathcal Q_R^{\rm com,2}} \\
&\quad +\underbrace{H(u)\Theta_{R,R_1}}_{\mathcal Q_R^{\Theta}}
  +\underbrace{H(u)\res\Psi_{R_1}^>+H(u)\Par\Psi_{R_1}^>}_{\mathcal Q_R^{\Psi,>}},
\end{aligned}
\end{equation}
where we used \eqref{eq:inherited-high-second}
\[
  H(u)\Psi_R^{>>}-H(u)\para\Psi_{R_1}^>
  =H(u)\Theta_{R,R_1}+H(u)\res\Psi_{R_1}^>+H(u)\Par\Psi_{R_1}^>,
\]

Using \eqref{eq:transport-identity} and \eqref{eq:pc-resonant-expanded}, the final equation is
\begin{equation}\label{eq:w-transport}
  \cL w=B_R(u)\cdot\nabla w+f_R,
  \qquad w(0)=0,
\end{equation}
where
\begin{equation*}
\begin{aligned}
  f_R={}&\cE_R
  +F(u)\eta_R^\leqslant
  +B_R(u)\cdot\nabla u_1^\sharp
  +\mathcal Q_R^{u^\sharp_R}
  +\mathcal Q_R^{\rm com,1}
  +\mathcal Q_R^{\rm com,2}
  +\mathcal Q_R^\Theta
  +\mathcal Q_R^{\Psi,>} .
\end{aligned}
\end{equation*}

\smallskip
 We first estimate the non-transport forcing term $f_R$ in
\(L_T^{\infty,\gamma}L^\infty(\rho_{D_L})\) and prove Proposition~\ref{prop:catalogue}.  This is enough for the Schauder step as well, because
\(L^\infty(\rho_{D_L})\hookrightarrow C^{-\varepsilon/2}(\rho_D)\) when \(D_L<D\), see Proposition~\ref{prop:schauder-step}.  The only term not included in this norm is the transport product \(B_R(u)\cdot\nabla w\): it is treated as a drift in the maximum-principle estimate and is estimated separately in the Schauder estimate.

 The analytic estimate is first proved for fixed cutoffs $R,R_1$ (see Proposition~\ref{prop:catalogue}).  The actual choices of the cutoffs are made later as functions of $S$ (see the proof of Proposition~\ref{prop:schauder-step}); this converts every localization factor, such as $2^{-(2\alpha-1-\kappa-2\varepsilon)R}$ or $2^{(2\kappa+\varepsilon)R_1}$,
into a monomial in the two unknown sizes $L$ and $S$.  After this reduction,
the proof reduces to scalar inequalities: one first bounds $L$ by the maximum
principle, then inserts this bound into the Schauder inequality, and finally checks that every
remaining power of $S$ is strictly less than one.

\smallskip

We shall use the abbreviation
\begin{equation}\label{eq:Ta-Tb-def}
  T_\eta:=2^{-(2\alpha-1-\kappa-2\varepsilon)R}U_\alpha,
\end{equation}
and
\begin{equation*}
  \norm{h}_{\mathfrak N}:=\norm{h}_{L_T^{\infty,\gamma}L^\infty(\rho_{D_L})}.
\end{equation*}
Recall $R_\eta$ and $R_\Psi$ as in
\eqref{eq:fixed-floor-cutoffs}.

\begin{proposition}\label{prop:catalogue}
For every \(R\geqslant R_\eta\) and \(R_1\geqslant R_\Psi\), one has
\begin{equation}\label{eq:term-before-cutoff}
  \norm{f_R}_{\mathfrak N}\leqslant C\mathfrak F_{R,R_1}(L,S).
\end{equation}
Here \begin{equation}\label{eq:raw-forcing-functional}
\begin{aligned}
  \mathfrak F_{R,R_1}(L,S):={}&1+2^{(1+\kappa+\varepsilon)R}+2^{-(1-\kappa-\varepsilon)R}S
  +2^{(2\kappa+\varepsilon)R_1} \\
  &+T_\eta+2^{-(\alpha-2\kappa-3\varepsilon)R_1}U_\alpha
  +K^{\kappa+5\varepsilon}(1+T_\eta^{\theta_0}),
\end{aligned}
\end{equation}
where \begin{equation}\label{eq:theta-theta0}
  \theta_0:=\frac{\kappa+\varepsilon}{r-1+\kappa}.
\end{equation}
\end{proposition}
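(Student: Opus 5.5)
The plan is to estimate the eight pieces of $f_R$ one at a time in $\mathfrak N=L_T^{\infty,\gamma}L^\infty(\rho_{D_L})$ and then add the bounds.

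\emph{Terms already estimated or directly controlled.}
\begin{enumerate}[label=(\roman*)]
\item The transport error $\cE_R$ is handled by Proposition~\ref{prop:transport-error}. It contributes $1+T_\eta+2^{-(1-\kappa-\varepsilon)R}S$.
\item For $F(u)\eta_R^\leqslant$, use $\|F(u)\|_{L^\infty}\lesssim1$ and $\|\eta_R^\leqslant\|_{L^\infty(\rho_{d_\eta})}\lesssim 2^{(1+\kappa+\varepsilon)R}$ from \eqref{eq:eta-low}. Then $d_\eta<D_L$ by \eqref{eq:DL} gives the term $2^{(1+\kappa+\varepsilon)R}$.
\item The term $\mathcal Q_R^\Theta=H(u)\Theta_{R,R_1}$ follows at once from \eqref{eq:Theta-catalogue-term}. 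It gives $2^{(1+\kappa+\varepsilon)R}+2^{(2\kappa+\varepsilon)R_1}$.
\item For $\mathcal Q_R^{\Psi,>}$, the resonant and right-paraproduct estimates of Lemma~\ref{lem:para} apply. They use $H(u)\in C^\alpha(\rho_{D_\alpha})$, of size $1+U_\alpha$ by \eqref{eq:u-alpha} and the composition estimate, against $\Psi_{R_1}^>\in C^{-\alpha+2\varepsilon}(\rho_{-\sigma_\Psi})$. The regularity sum is $2\varepsilon>0$ and $D_\alpha-\sigma_\Psi<D_1<D_L$. The time weight $t^{\gamma\alpha/s}\le t^\gamma$ up to $T$. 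This gives $2^{-(\alpha-2\kappa-3\varepsilon)R_1}(1+U_\alpha)$.
\end{enumerate}

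\emph{Commutators and the paracontrolled product.}
\begin{enumerate}[label=(\roman*),resume]
\item For $\mathcal Q_R^{\rm com,2}$, apply the commutator lemma with $F(u)\in C^\alpha(\rho_{D_\alpha})$, $\sI\eta_R^>\in C^{1-\kappa}(\rho_{s_0})$ and $\eta_R^>\in C^{-(2\alpha-2\varepsilon)}(\rho_{-\sigma_\eta})$. The regularity sum is $1-\kappa>0$. The weight is fine by $D_\alpha+s_0-\sigma_\eta<D_L$, which follows from \eqref{eq:first-level-high-product-margin} and \eqref{eq:initial-weight-margin}. The result is $T_\eta+2^{-(2\alpha-1-\kappa-2\varepsilon)R}$.
\item $\mathcal Q_R^{\rm com,1}$ is treated the same way. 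It uses $F'(u)\in C^\alpha$ and $u_1\in C^{1-\kappa}$ via \eqref{eq:singular-ansatz-bound}, and gives the same bound.
\item For $\mathcal Q_R^{u^\sharp_R}$, split $u^\sharp_R=u_1^\sharp+w$.
  \begin{itemize}
  \item The $u_1^\sharp$ part uses $C^r(\rho_{D_1})$ from \eqref{eq:u1-r} and $\eta_R^>\in C^{-1-\kappa}(\rho_{s_0})$. Here $r-1-\kappa>0$ and $D_1+s_0<D_L$. It contributes $1+T_\eta+2^{-(\alpha-2\kappa-3\varepsilon)R_1}$.
  \item The $w$ part uses $w\in C^s(\rho_D)$ against $\eta_R^>\in C^{-2+\varepsilon}(\rho_{-\sigma_{\eta,\varepsilon}})$. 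By \eqref{eq:transport-error-weight-margin} this gives $2^{-(1-\kappa-\varepsilon)R}S$.
  \end{itemize}
\end{enumerate}

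\emph{Main obstacle: the drift term $B_R(u)\cdot\nabla u_1^\sharp$.} This is the only term producing the $K^{\kappa+5\varepsilon}(1+T_\eta^{\theta_0})$ structure, and I expect it to be the hardest.
\begin{itemize}
\item By \eqref{eq:B-time-bound}, $t^{\omega_B}\|B_R\|_{L^\infty(\rho_{D_{\rm dr}})}\lesssim1+K^{\kappa+5\varepsilon}$.
\item The gradient $\nabla u_1^\sharp$ has to be bounded in $L^\infty$ with a time weight. Interpolate between \eqref{eq:u1-low}, which controls $C^{1-\kappa}$ uniformly in time, and \eqref{eq:u1-r}, which controls $C^r$ with weight $t^\gamma$. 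This gives $\|\nabla u_1^\sharp(t)\|_{C^{\varepsilon}}\lesssim \|u_1^\sharp\|_{C^{1-\kappa}}^{1-\theta_0}\,(t^{-\gamma}\|u_1^\sharp\|_{C^r})^{\theta_0}$ with $\theta_0=(\kappa+\varepsilon)/(r-1+\kappa)$, since $1+\varepsilon=(1-\theta_0)(1-\kappa)+\theta_0 r$.
\item The interpolation factor is $(1+T_\eta+2^{-(\alpha-2\kappa-3\varepsilon)R_1})^{\theta_0}\lesssim 1+T_\eta^{\theta_0}$, because $R_1\ge R_\Psi$.
\item The resulting time singularity is $t^{-\omega_B-\gamma\theta_0}$. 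This must be $\le t^{-\gamma}$ after multiplying by $t^\gamma$, i.e. $\omega_B+\gamma\theta_0\le\gamma$. This holds since $\theta_0+(\kappa+5\varepsilon)(1+\varepsilon)/s<1$ for small $\kappa$ range and $\varepsilon$; I would verify this carefully.
\item On the weights, $D_{\rm dr}+((1-\theta_0)d_0+\theta_0D_1)$ needs to be $<D_L$. This is the delicate point, since $D_{\rm dr}\approx(\kappa+5\varepsilon)D_\varepsilon$ is comparable to $D_L$. If it fails, I would instead measure $\nabla u_1^\sharp$ with the interpolated weight and use the small weight $d_0=o(D_L)$ from \eqref{eq:u1-low} dominating, i.e. take $\theta_0$ weight $\theta_0D_1$. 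I would check that $(\kappa+5\varepsilon)D_\varepsilon/D_L+\theta_0<1$ at leading order, adjusting $\ell$ within \eqref{eq:ell-window} if needed.
\end{itemize}
Summing all contributions yields $\mathfrak F_{R,R_1}(L,S)$.
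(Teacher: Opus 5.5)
Your proposal is essentially the paper's proof. It uses the same term-by-term treatment: the transport error from Proposition~\ref{prop:transport-error}, direct bounds for $F(u)\eta_R^\leqslant$, $\mathcal Q_R^\Theta$ and $\mathcal Q_R^{\Psi,>}$, the $u_1^\sharp/w$ split of $\mathcal Q_R^{u_R^\sharp}$, and the $\theta_0$-interpolation for $B_R\cdot\nabla u_1^\sharp$. A few details need correcting.

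\textbf{Commutators.} You put the $R$-decay on $\eta_R^>\in C^{-(2\alpha-2\varepsilon)}(\rho_{-\sigma_\eta})$, whereas the paper puts it on $\sI\eta_R^>$ (or $u_1$) in $C^{2-2\alpha+2\varepsilon}(\rho_{-\sigma_\eta})$. This swap is harmless, since the total regularity and total weight are the same. However, the total regularity is $\alpha+(1-\kappa)-(2\alpha-2\varepsilon)=7\varepsilon=r-1-\kappa$, not $1-\kappa$. You should also check the hypothesis $(1-\kappa)-(2\alpha-2\varepsilon)<0$ of the commutator lemma.

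\textbf{The bound on $u_1$ in $\mathcal Q_R^{\rm com,1}$.} The bounded $C^{1-\kappa}(\rho_{s_0})$ norm of $u_1$ comes from \eqref{eq:para-Linfty-left}, as in \eqref{eq:comm-low-product-detail}. By contrast, \eqref{eq:singular-ansatz-bound} is only a $C^\alpha(\rho_{D_\alpha})$ bound of size $1+T_\eta$. Using it would leave a quadratic $T_\eta^2$ term and an inadmissible weight.

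\textbf{The drift weight condition.} The condition you leave open holds for the $\ell$ fixed in \eqref{eq:theta-ell-def}; there is no freedom to adjust $\ell$, and none is needed. Divide $D_{\rm dr}+\theta_0D_1+d_0<D_L$ by $D_L$, multiply by $\ell$, and let $\varepsilon\to0$. This gives $\kappa(1+\ell)/2+\ell/2<\ell$, i.e.\ $\ell>\kappa/(1-\kappa)$. That follows from $\ell>\theta\to(1+\kappa)/2$, because $\kappa/(1-\kappa)<(1+\kappa)/2$ is equivalent to $\kappa^2+2\kappa-1<0$, which holds on $0<\kappa<\sqrt5-2$. The contributions of $s_0$, $d_0$ and $D_L-D_1$ are $o(D_L)$ as $\mu_{\rm wt}\downarrow0$.

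\textbf{The time condition.} The condition $\omega_B+\gamma\theta_0\leqslant\gamma$ has the strict limit $\kappa/2+1/2<1$, so it also holds for small $\varepsilon$.
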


\begin{proof}
We first have
\begin{equation}\label{eq:cat-low-noise}
\begin{aligned}
  \norm{F(u)\eta_R^\leqslant}_{\mathfrak N}
  &\lesssim 2^{(1+\kappa+\varepsilon)R},
  \qquad
  \norm{\eta_R^\leqslant}_{C^\varepsilon(\rho_{d_\eta})}
  \lesssim2^{(1+\kappa+\varepsilon)R}.
\end{aligned}
\end{equation}
Here we used  $d_\eta+s_0<D_L$ from \eqref{eq:DL}
and  $D_L<D$. 

\smallskip
\noindent\emph{Paracontrolled resonant part.}
Split
\[
  \mathcal Q_R^{u^\sharp_R}
  =(F'(u)\para u_1^\sharp)\res\eta_R^>
   +(F'(u)\para w)\res\eta_R^>
  =:\mathcal Q_R^{u_1^\sharp}+\mathcal Q_R^w .
\]
The first term uses \(u_1^\sharp\in C^r(\rho_{D_1})\) and the rough high noise
\(\eta_R^>\in C^{-1-\kappa}(\rho_{s_0})\).  By the paraproduct estimate
\eqref{eq:para-Linfty-left}, the resonant product estimate \eqref{eq:res-est},
Proposition~\ref{prop:first-layer}, and the high-noise bound \eqref{eq:eta-high-lambda},
\begin{equation}\label{eq:cat-Q-u1}
\begin{aligned}
  \norm{\mathcal Q_R^{u_1^\sharp}}_{\mathfrak N}
  &\lesssim
  \norm{F'(u)\para u_1^\sharp}_{L_T^{\infty,\gamma}C^r(\rho_{D_1})}
  \norm{\eta_R^>}_{L_T^\infty C^{-1-\kappa}(\rho_{s_0})} \\
  &\lesssim
  \norm{u_1^\sharp}_{\mathbb S_T^{r,\gamma;\alpha/2}(\rho_{D_1})}
  \norm{\eta_R^>}_{L_T^\infty C^{-1-\kappa}(\rho_{s_0})} \\
  &\lesssim
  1+2^{-(2\alpha-1-\kappa-2\varepsilon)R}U_\alpha
    +2^{-(\alpha-2\kappa-3\varepsilon)R_1}
  \lesssim 1+T_\eta .
\end{aligned}
\end{equation}
Here we used  $D_1+s_0<D_L$ by \eqref{eq:initial-weight-margin},
$r-1-\kappa=7\varepsilon>0$, and
$2^{-(\alpha-2\kappa-3\varepsilon)R_1}\leqslant 1$ for the admissible cutoffs.
By \eqref{eq:eta-high-alpha-time}, \eqref{eq:para-Linfty-left} for \(F'(u)\para w\),  \eqref{eq:res-est}, the definition of \(S\) in \eqref{eq:L-S-def}, and \(s-2+\varepsilon=\varepsilon/2>0\), we get
\[
\begin{aligned}
  \norm{\mathcal Q_R^w}_{\mathfrak N}
  &\lesssim
  \norm{F'(u)\para w}_{L_T^{\infty,\gamma}C^s(\rho_D)}
  \norm{\eta_R^>}_{L_T^\infty C^{-2+\varepsilon}(\rho_{-\sigma_{\eta,\varepsilon}})}  \\
  &\lesssim 2^{-(1-\kappa-\varepsilon)R}S .
\end{aligned}
\]
Thus
\begin{equation*}
  \norm{\mathcal Q_R^w}_{\mathfrak N}
  \lesssim2^{-(1-\kappa-\varepsilon)R}S,
\end{equation*}
where we used  $D-\sigma_{\eta,\varepsilon}<D_L$ by \eqref{eq:transport-error-weight-margin}.

\smallskip
\noindent\emph{Commutators.}
The two commutators in \eqref{eq:pc-resonant-expanded} are
\[
  \mathcal Q_R^{\rm com,1}=\mathrm{Com}(F'(u),u_1,\eta_R^>),
  \qquad
  \mathcal Q_R^{\rm com,2}=F'(u)\mathrm{Com}(F(u),\sI(\eta_R^>),\eta_R^>).
\]

First, boundedness of \(F\),  \eqref{eq:para-Linfty-left}, Lemma~\ref{lem:mild-time-increments}, and  \eqref{eq:eta-high-lambda} give
\[
  \norm{u_1}_{C^{2-2\alpha+2\varepsilon}(\rho_{-\sigma_\eta})}
  =\norm{F(u)\para\sI\eta_R^>}_{C^{2-2\alpha+2\varepsilon}(\rho_{-\sigma_\eta})}
  \lesssim 2^{-(2\alpha-1-\kappa-2\varepsilon)R}.
\]
By \eqref{eq:u-alpha} we obtain \(F'(u),F(u)\in C^\alpha(\rho_{D_\alpha})\) with bound \(1+U_\alpha\).
Using
\[
  F'(u)\in C^\alpha(\rho_{D_\alpha}),\qquad
  u_1\in C^{2-2\alpha+2\varepsilon}(\rho_{-\sigma_\eta}),\qquad
  \eta_R^>\in C^{-1-\kappa}(\rho_{s_0}),
\]
we have
\[
  \alpha+(2-2\alpha+2\varepsilon)+(-1-\kappa)=r-1-\kappa>0,
  \qquad
  (2-2\alpha+2\varepsilon)+(-1-\kappa)<0.
\]
Hence,  \eqref{eq:weighted-bony-commutator-catalogue},  and \eqref{eq:u-alpha} give
\begin{equation*}
  \norm{\mathcal Q_R^{\rm com,1}}_{\mathfrak N}
  \lesssim 2^{-(2\alpha-1-\kappa-2\varepsilon)R}(1+U_\alpha)\lesssim 1+T_\eta .
\end{equation*}
Here the last inequality uses the definition of \(T_\eta\) in \eqref{eq:Ta-Tb-def}, and the weight placement uses \(D_\alpha-\sigma_\eta+s_0<D_1+s_0<D_L\), by \eqref{eq:first-level-high-product-margin} and \eqref{eq:initial-weight-margin}.
Similarly,
\[
  F(u)\in C^\alpha(\rho_{D_\alpha}),\qquad
  \sI\eta_R^>\in C^{2-2\alpha+2\varepsilon}(\rho_{-\sigma_\eta}),\qquad
  \eta_R^>\in C^{-1-\kappa}(\rho_{s_0}),
\]
and the same regularity and weight conditions allow another application of the commutator estimate \eqref{eq:weighted-bony-commutator-catalogue}.  Using again  \eqref{eq:u-alpha}, \eqref{eq:Ta-Tb-def}, \eqref{eq:first-level-high-product-margin}, and \eqref{eq:initial-weight-margin}, we obtain
\begin{equation}\label{eq:cat-com2}
  \norm{\mathcal Q_R^{\rm com,2}}_{\mathfrak N}
  \lesssim 2^{-(2\alpha-1-\kappa-2\varepsilon)R}(1+U_\alpha)\lesssim 1+T_\eta .
\end{equation}

\smallskip
\noindent\emph{Other terms.}
 By Lemma~\ref{lem:inherited-second-level} we obtain
\begin{equation}\label{eq:cat-second-total}
  \norm{\mathcal Q_R^\Theta}_{\mathfrak N}
  =\norm{H(u)\Theta_{R,R_1}}_{\mathfrak N}
  \lesssim 2^{(1+\kappa+\varepsilon)R}+2^{(2\kappa+\varepsilon)R_1}.
\end{equation}

Using \(H(u)\in C^\alpha(\rho_{D_\alpha})\) and the first estimate in \eqref{eq:Psi-high}, namely \(\Psi_{R_1}^>\in C^{-\alpha+2\varepsilon}(\rho_{-\sigma_\Psi})\) with gain $2^{-(\alpha-2\kappa-3\varepsilon)R_1}$, the resonant part is admissible since
\(\alpha+(-\alpha+2\varepsilon)=2\varepsilon>0\).   Hence
\begin{equation}\label{eq:cat-second-high-remainder}
  \norm{\mathcal Q_R^{\Psi,>}}_{\mathfrak N}
  \lesssim 2^{-(\alpha-2\kappa-3\varepsilon)R_1}(1+U_\alpha)
  \lesssim 1+2^{-(\alpha-2\kappa-3\varepsilon)R_1}U_\alpha .
\end{equation}
Here we used
\begin{equation*}
  D_\alpha-\sigma_\Psi<D_1<D_L.
\end{equation*}
Indeed, \(\sigma_\Psi\) is the midpoint of a non-empty interval and \(\sigma_{\Psi,-}=D_\alpha-D_1\), hence \(\sigma_\Psi>D_\alpha-D_1\); the last inequality follows from \eqref{eq:initial-weight-margin}.

From Lemma \ref{lem:G-bounds},
\begin{equation*}
  \norm{B_R^{}(t)}_{C^\varepsilon(\rho_{D_{\rm dr}})}
  \lesssim t^{-\omega_B}\bigl(1+K^{\kappa+5\varepsilon}\bigr),
  \qquad
  \omega_B=\gamma(\kappa+5\varepsilon)\frac{1+\varepsilon}{s}.
\end{equation*}
Interpolation between \eqref{eq:u1-low} and \eqref{eq:u1-r} gives
\begin{equation*}
\begin{gathered}
  1+\varepsilon=(1-\theta_0)(1-\kappa)+\theta_0r,\\
  \norm{\nabla u_1^\sharp}_{C_T^{\varepsilon,\gamma\theta_0}(\rho_{(1-\theta_0)d_0+\theta_0D_1})}
  \lesssim1+T_\eta^{\theta_0}.
\end{gathered}
\end{equation*}
 Therefore
\begin{align}
  \norm{B_R\cdot\nabla u_1^\sharp}_{\mathfrak N}
  &\lesssim \bigl(1+K^{\kappa+5\varepsilon}\bigr)(1+T_\eta^{\theta_0}),
  \label{eq:cat-B2-u1}
\end{align}
where we used \(0<\theta_0<1\).
For the time singularities and weights here we used
\begin{equation}\label{eq:time-product-margin-a}
  \bigl(1+(\kappa+5\varepsilon)\bigr)\frac{1+\varepsilon}{s}<1,
  \qquad
  (\kappa+5\varepsilon)\frac{1+\varepsilon}{s}+\theta_0<1.
\end{equation}
They follow by continuity from their strict limits at \(\varepsilon=0\):
\[
  (1+\kappa)\frac12<1,
  \qquad \frac\kappa2+\frac12<1.
\]
Here for the weight we also used
$D_{\rm dr}+\theta_0D_1+d_0<D_L $.
Indeed, after dividing by \(D_L\), we obtain
\[
  (\kappa+5\varepsilon)
  \left(1-\frac{1+\varepsilon}{s}+\frac{1+\varepsilon}{s\ell}\right)
  +\theta_0<1 .
\]
Multiplying by \(\ell\), and then setting \(\varepsilon=0\), gives
\[
  \kappa\frac{1+\ell}{2}+\frac\ell2<\ell,
  \qquad\text{equivalently}\qquad
  \ell>\frac{\kappa}{1-\kappa}.
\]
This lower bound is weaker than \(\ell>\theta\) on \(0<\kappa<\sqrt5-2\), because \(\theta\to(1+\kappa)/2\) and \(\kappa/(1-\kappa)<(1+\kappa)/2\).  Hence the case $\varepsilon=0$ leaves a strict margin, allowing us to take \(\varepsilon\) sufficiently small; the perturbative terms \(s_0/D_L\), \((D_L-D_1)/D_L\), and \(d_0/D_L\) vanish as \(\mu_{\rm wt}\downarrow0\).  This is the requirement for the  weight  of the transport term $B_R\cdot\nabla u_1^\sharp$ in \eqref{eq:cat-B2-u1}.

Combining Proposition~\ref{prop:transport-error}, \eqref{eq:cat-low-noise},
\eqref{eq:cat-Q-u1}--\eqref{eq:cat-com2}, \eqref{eq:cat-second-total},
\eqref{eq:cat-second-high-remainder}, and the transport estimate \eqref{eq:cat-B2-u1} gives exactly \eqref{eq:term-before-cutoff}.
\end{proof}

\begin{proposition}\label{prop:max-step}
Let
\begin{equation*}
  p_{\rm mp}:=\frac{D_L}{1-D_{\rm dr}}.
\end{equation*}
Then the solution of \eqref{eq:w-transport} satisfies
\begin{equation}\label{eq:max-before-cutoff}
  L\leqslant C\bigl(1+K^{\kappa+5\varepsilon}\bigr)^{p_{\rm mp}}
  \bigl(1+\norm{f_R}_{\mathfrak N}\bigr).
\end{equation} Here we used $0\leqslant \omega_B=\gamma(\kappa+5\varepsilon)\frac{1+\varepsilon}{s}<1$.
Moreover, taking the cutoff by \eqref{eq:R-dynamic}, \eqref{eq:R1-dynamic} below, we obtain
\begin{equation}\label{eq:term-after-cutoff}
\begin{aligned}
  \norm{f_R}_{\mathfrak N}
  \leqslant C\Big(&1+S^\Theta+L^{A_\alpha}S^{B_\alpha}
  +L^{A_K}S^{B_K}\bigl(1+L^{\theta_0A_\alpha}S^{\theta_0B_\alpha}\bigr)\Big).
\end{aligned}
\end{equation}
Here all the parameters $\Theta, A_\alpha, B_\alpha, A_K, B_K$ are given in the proof.
\end{proposition}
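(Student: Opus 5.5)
The plan has two parts: the maximum-principle bound \eqref{eq:max-before-cutoff}, and then the substitution of explicit dynamic cutoffs into Proposition~\ref{prop:catalogue}.

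\emph{Part 1: the maximum-principle bound.} Apply Lemma~\ref{lem:max} to \eqref{eq:w-transport} with $v=w$, $B=B_R(u)$ and $f=f_R$. The drift bound comes from \eqref{eq:B-time-bound}. Since $\|B\|_{L^\infty(\rho_{D_{\rm dr}})}\lesssim\|B\|_{C^\varepsilon(\rho_{D_{\rm dr}})}$, we get
\[
A_B\leqslant C(1+K^{\kappa+5\varepsilon}),
\]
with $\omega_B=\gamma(\kappa+5\varepsilon)\frac{1+\varepsilon}{s}$ and $D_B=D_{\rm dr}$. The hypotheses of Lemma~\ref{lem:max} are checked as follows.
\begin{itemize}
\item $\omega_B<1$, because $\gamma<1$ and $(\kappa+5\varepsilon)(1+\varepsilon)/s<1$ by \eqref{eq:time-product-margin-a}.
\item $0\leqslant D_{\rm dr}<1$, because $D_{\rm dr}=O(\mu_{\rm wt}^{1/2})$ is small.
\end{itemize}
Lemma~\ref{lem:max} then gives $L\leqslant C(1+A_B)^{p_{\rm mp}}\|f_R\|_{\mathfrak N}$, and this yields \eqref{eq:max-before-cutoff}.

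\emph{Part 2: choice of cutoffs.} Choose $R,R_1$ as functions of $S$ (possibly also of $L$) so that the growing and decaying terms in $\mathfrak F_{R,R_1}$ balance. Concretely, set
\[
2^{R}:=\max\bigl\{2^{R_\eta},\,S^{1/((1+\kappa+\varepsilon)+(1-\kappa-\varepsilon))}\bigr\},
\]
so that $2^{(1+\kappa+\varepsilon)R}\simeq 2^{-(1-\kappa-\varepsilon)R}S=S^{\Theta_1}$ with $\Theta_1=\frac{1+\kappa+\varepsilon}{2}$. This is the choice \eqref{eq:R-dynamic}. If the paper instead balances against $T_\eta$, I would take $2^R\simeq S^{\theta/(1+\kappa+\varepsilon)}$ in the spirit of $\theta$ in \eqref{eq:m-GammaR}; there $m_\varepsilon$ is exactly the decay exponent available after trading $S^{\alpha/s}$ in $T_\eta$. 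Then $2^{(1+\kappa+\varepsilon)R}=S^\theta$, and $T_\eta$ becomes a monomial $L^{1-\alpha/s}S^{\alpha/s-\theta(2\alpha-1-\kappa-2\varepsilon)/(1+\kappa+\varepsilon)}$.

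For $R_1$ (\eqref{eq:R1-dynamic}), set $2^{R_1}:=\max\{2^{R_\Psi},S^{c}\}$, with $c$ chosen so that $2^{(2\kappa+\varepsilon)R_1}\leqslant S^{\theta}$. This makes $2^{-(\alpha-2\kappa-3\varepsilon)R_1}U_\alpha$ a monomial $L^{1-\alpha/s}S^{\alpha/s-c(\alpha-2\kappa-3\varepsilon)}$. The floors $R_\eta,R_\Psi$ only contribute to the constant $1$.

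\emph{Collecting terms.} Every term of \eqref{eq:raw-forcing-functional} is now of the form $L^aS^b$:
\begin{itemize}
\item $1$, $2^{(1+\kappa+\varepsilon)R}$, $2^{-(1-\kappa-\varepsilon)R}S$ and $2^{(2\kappa+\varepsilon)R_1}$ combine into $S^\Theta$, where $\Theta$ is the maximum of the resulting exponents;
\item $T_\eta$ and the $R_1$-term give $L^{A_\alpha}S^{B_\alpha}$, with $A_\alpha=1-\alpha/s$ and $B_\alpha$ the larger of the two $S$-exponents;
\item $K^{\kappa+5\varepsilon}=L^{A_K}S^{B_K}$, with $A_K=(\kappa+5\varepsilon)(1-\frac{1+\varepsilon}{s})$ and $B_K=(\kappa+5\varepsilon)\frac{1+\varepsilon}{s}$;
\item $T_\eta^{\theta_0}\leqslant L^{\theta_0A_\alpha}S^{\theta_0B_\alpha}$.
\end{itemize}
Summing gives \eqref{eq:term-after-cutoff}.

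\emph{Main obstacle.} The only delicate point is the bookkeeping of monomials when several regimes are active, namely which cutoff floor binds. I would handle this by taking maxima of exponents and noting that every exponent is nonnegative. This lets each term be bounded by $1$ plus the monomial uniformly in both regimes $S\lessgtr1$.
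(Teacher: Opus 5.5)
Your proposal is correct and follows the paper's route. Lemma~\ref{lem:max} with the drift bound \eqref{eq:B-time-bound} gives \eqref{eq:max-before-cutoff}, and your cutoff $2^R\simeq S^{1/2}$ coincides with the paper's $2^{\Gamma_R R}\simeq C_0(1+S)$, since for $\kappa<\sqrt5-2$ and small $\varepsilon$ one has $m_\varepsilon=1-\kappa-\varepsilon$ and hence $\Gamma_R=2$. The one point to pin down is your free exponent $c$ for $R_1$: the paper takes $c=1/(\alpha+\varepsilon)$ and checks $\frac{\alpha-2\kappa-3\varepsilon}{\alpha+\varepsilon}>\frac{2\alpha-1-\kappa-2\varepsilon}{\Gamma_R}$, so that your ``larger of the two $S$-exponents'' is exactly $B_\alpha=\frac{\alpha}{s}-\frac{2\alpha-1-\kappa-2\varepsilon}{\Gamma_R}$ (with $\Theta=\theta\vee\theta_\Psi$), and these are the specific values the later scalar closure needs.
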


\begin{proof}
First we have \(D_{\rm dr}+\theta_0D_1+d_0<D_L\), proved after \eqref{eq:time-product-margin-a}, gives \(D_{\rm dr}<D_L\).  Moreover, after shrinking the uniform weight bound if necessary, \(D_{\rm dr}<1\).   Applying Lemma~\ref{lem:max} and \eqref{eq:B-time-bound} to \eqref{eq:w-transport} gives \eqref{eq:max-before-cutoff}.

We now choose the cutoffs. First we consider terms containing $R$. Set
\[
  a_R:=1+\kappa+\varepsilon,
  \qquad
  b_R:=1-\kappa-\varepsilon,
  \qquad
  c_R:=2\alpha-1-\kappa-2\varepsilon.
\]
Recall \eqref{eq:raw-forcing-functional} contains
\[
  2^{a_RR},\qquad
  2^{-b_RR}S,\qquad
  T_\eta=2^{-c_RR}U_\alpha.
\] If a maximum-principle inequality contains a monomial
\[
  \lambda L^A S^B,\qquad A<1,
\]
then, after eliminating the $L$-power, this monomial contributes the effective term
\[
  \lambda^{1/(1-A)}S^{B/(1-A)} .
\]
For the term
\(
  T_\eta=2^{-c_RR}U_\alpha,
\)
recall from \eqref{eq:Ualpha-def} that
\(U_\alpha=L^{1-\alpha/s}S^{\alpha/s}\).
Therefore, after eliminating the $L$-power, its effective contribution is
\[
  2^{-(s/\alpha)c_RR}S.
\]
Since the loss ${p_{\rm mp}}$ is small, we do not include it in this preliminary cutoff balance, for notational simplicity.

Thus the first cutoff should balance $2^{a_RR}$ against the worse of the two high-frequency gains
\[
  2^{-b_RR}S,
  \qquad
  2^{-(s/\alpha)c_RR}S.
\]
This leads to
\begin{equation}\label{eq:GammaR-def}
\begin{gathered}
  m_\varepsilon=b_R\wedge\frac{s}{\alpha}c_R,\qquad
  \Gamma_R:=a_R+m_\varepsilon=1+\kappa+\varepsilon+m_\varepsilon,
\end{gathered}
\end{equation}
and
\begin{align}
  2^{\Gamma_RR}=2^{(1+\kappa+\varepsilon+m_\varepsilon)R}&\simeq C_0(1+S).
  \label{eq:R-dynamic}
\end{align}
Here $R_\eta$ is the deterministic lower cutoff fixed in Proposition~\ref{prop:first-layer}, and we choose $C_0$ large enough so that $R\geqslant R_\eta$.  Recall also from \eqref{eq:m-GammaR} that
\[
  \theta=\frac{1+\kappa+\varepsilon}{1+\kappa+\varepsilon+m_\varepsilon}
  =\frac{1+\kappa+\varepsilon}{\Gamma_R}.
\]
  From \eqref{eq:R-dynamic},
\begin{align}
  2^{(1+\kappa+\varepsilon)R}
  &\lesssim 1+S^{(1+\kappa+\varepsilon)/\Gamma_R}=1+S^\theta,
  \label{eq:cut-low-R}\\
  2^{-(1-\kappa-\varepsilon)R}S
  &\lesssim 1+S^{1-(1-\kappa-\varepsilon)/\Gamma_R}
  \leqslant 1+S^{1-m_\varepsilon/\Gamma_R}=1+S^\theta, \notag\\
  T_\eta=2^{-(2\alpha-1-\kappa-2\varepsilon)R}U_\alpha
  &\lesssim 1+L^{1-\alpha/s}S^{\alpha/s-(2\alpha-1-\kappa-2\varepsilon)/\Gamma_R}
  =1+L^{A_\alpha}S^{B_\alpha}, \notag
\end{align}
where
\begin{equation}\label{eq:Aalpha-Balpha}
  A_\alpha:=1-\frac{\alpha}{s},
  \qquad
  B_\alpha:=\frac{\alpha}{s}-\frac{2\alpha-1-\kappa-2\varepsilon}{\Gamma_R}.
\end{equation}

Set \[
  C_\Psi:=2\kappa+\varepsilon,
  \qquad
  A_\Psi:=\alpha-2\kappa-3\varepsilon.
\]
Similarly, the terms involving $R_1$ are
\[
  2^{C_\Psi R_1},\qquad
  2^{-A_\Psi R_1}U_\alpha.
\]
The term
\[
  2^{-A_\Psi R_1}U_\alpha
\]
has the effective cutoff form
\[
  2^{-(s/\alpha)A_\Psi R_1}S.
\]
We do not rebalance $R_1$ with this sharper effective power, and instead use the simpler choice
\begin{align}
  2^{(\alpha+\varepsilon)R_1}&\simeq C_1(1+S).
  \label{eq:R1-dynamic}
\end{align}
Here $R_\Psi$ is the deterministic lower cutoff fixed in Proposition~\ref{prop:first-layer}, and we choose $C_1$ large enough so that $R_1\geqslant R_\Psi$.
Thus, we obtain
\begin{equation*}
  2^{(2\kappa+\varepsilon)R_1}
  \lesssim1+S^{(2\kappa+\varepsilon)/(\alpha+\varepsilon)}
  =1+S^{\theta_\Psi}
  \leqslant 1+S^\Theta.
\end{equation*}
Here \begin{equation}\label{eq:thetaPsi}
  \theta_\Psi:=\frac{2\kappa+\varepsilon}{\alpha+\varepsilon},
\end{equation}
and \begin{equation*}
  \Theta:=\theta\vee\theta_\Psi.
\end{equation*}
 Moreover, $ 2^{-(\alpha-2\kappa-3\varepsilon)R_1}U_\alpha$ is controlled by the same mixed monomial as \(T_\eta\):
\begin{equation*}
  2^{-(\alpha-2\kappa-3\varepsilon)R_1}U_\alpha
  \lesssim1+L^{A_\alpha}S^{B_\alpha}.
\end{equation*}
Indeed, after absorbing the fixed cutoff constant, the left-hand side is bounded by
\[
  L^{1-\alpha/s}S^{\alpha/s}(1+S)^{-(\alpha-2\kappa-3\varepsilon)/(\alpha+\varepsilon)}.
\]
For \(0\leqslant S\leqslant1\), this is bounded by \(1+L^{A_\alpha}S^{B_\alpha}\), since \(0<B_\alpha<\alpha/s\) for the admissible parameters.  For \(S\geqslant1\), it is bounded by the same monomial because
\[
  \frac{\alpha-2\kappa-3\varepsilon}{\alpha+\varepsilon}
  >
  \frac{2\alpha-1-\kappa-2\varepsilon}{\Gamma_R},
\]
which follows from \(\alpha-2\kappa-3\varepsilon>2\alpha-1-\kappa-2\varepsilon\), \(\alpha+\varepsilon<1\), and \(\Gamma_R>1\), after decreasing \(\varepsilon\).

Finally,
\begin{align}
  K^{\kappa+5\varepsilon}&=L^{A_K}S^{B_K}, \notag\\
  K^{\kappa+5\varepsilon} T_\eta^{\theta_0}
  &\lesssim
  L^{A_K}S^{B_K}\bigl(1+L^{\theta_0A_\alpha}S^{\theta_0B_\alpha}\bigr).
  \label{eq:cut-K-Ta}
\end{align}
Here \begin{equation}\label{eq:AK-BK}
  A_K:=(\kappa+5\varepsilon)\left(1-\frac{1+\varepsilon}{s}\right),
  \qquad
  B_K:=(\kappa+5\varepsilon)\frac{1+\varepsilon}{s}.
\end{equation}
Combining \eqref{eq:term-before-cutoff} with \eqref{eq:cut-low-R}--\eqref{eq:cut-K-Ta} gives \eqref{eq:term-after-cutoff}.

\end{proof}

\begin{remark}
The choice of $R_1$ above is not optimized to balance the two  contributions after the maximum-principle elimination of $L$.  Rather, it reduces the estimate before the maximum-principle step while keeping the same finite family of monomials.  A sharper balance will not improve the range of $\kappa$.  The present choice of $R_1$ is therefore a non-optimal but technically convenient cutoff choice.
\end{remark}

\begin{proposition}\label{prop:schauder-step}
For every \(R\geqslant R_\eta\) and \(R_1\geqslant R_\Psi\), one has
\begin{equation}\label{eq:schauder-before-cutoff}
  S\leqslant C\Bigl(\norm{f_R}_{\mathfrak N}
  +\bigl(1+K^{\kappa+5\varepsilon}\bigr)K\Bigr).
\end{equation}
\end{proposition}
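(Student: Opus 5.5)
We plan to apply the weighted parabolic Schauder estimate to \eqref{eq:w-transport}, treating the drift term $B_R(u)\cdot\nabla w$ as a product rather than a drift. Since $w(0)=0$, the mild formulation gives
\[
  w=\sI\bigl(f_R\bigr)+\sI\bigl(B_R(u)\cdot\nabla w\bigr).
\]
We will estimate both Duhamel terms in $\mathbb S_T^{s,\gamma;s/2}(\rho_D)$ using the time-weighted Schauder bounds of Lemma~\ref{lem:mild-time-increments}. These bounds map $L_T^{\infty,\gamma'}C^{s-2}(\rho_{D'})$, with $D'\leqslant D$ and $\gamma'\leqslant\gamma$, into $\mathbb S_T^{s,\gamma;s/2}(\rho_D)$, possibly with a fixed small loss of regularity compensated by the $\varepsilon$-margins.

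\emph{Forcing term.} Since $D_L<D$ and $s-2=-\varepsilon/2$, the embedding $L^\infty(\rho_{D_L})\hookrightarrow C^{-\varepsilon/2}(\rho_D)$ gives
\[
  \norm{f_R}_{L_T^{\infty,\gamma}C^{s-2}(\rho_D)}\lesssim\norm{f_R}_{\mathfrak N}.
\]
Because the time weight $\gamma$ is preserved, the Schauder estimate yields
\[
  \norm{\sI f_R}_{\mathbb S_T^{s,\gamma;s/2}(\rho_D)}\lesssim\norm{f_R}_{\mathfrak N}.
\]
This produces the first term in \eqref{eq:schauder-before-cutoff}.

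\emph{Transport product.} By Lemma~\ref{lem:G-bounds}, \eqref{eq:B-time-bound},
\[
  t^{\omega_B}\norm{B_R(t)}_{C^\varepsilon(\rho_{D_{\rm dr}})}\lesssim 1+K^{\kappa+5\varepsilon}.
\]
By \eqref{eq:K-def}, $\nabla w\in C^{\varepsilon}(\rho_{D_\varepsilon})$ with time weight $t^{\gamma(1+\varepsilon)/s}$ and size $K$. The product of two $C^\varepsilon$ functions is in $C^\varepsilon\subset C^{s-2}$ with weight $\rho_{D_{\rm dr}+D_\varepsilon}$ (Lemma~\ref{lem:para}), and $D_{\rm dr}+D_\varepsilon<D$ by \eqref{eq:G1-weight-margin}; this is exactly where $\ell<\ell_+$ is used. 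The time singularity is $t^{-\omega_B-\gamma(1+\varepsilon)/s}=t^{-\gamma(1+\kappa+5\varepsilon)(1+\varepsilon)/s}$. By the first inequality of \eqref{eq:time-product-margin-a} this exponent is strictly less than $\gamma$, so on $[0,T]$ we get
\[
  \norm{B_R\cdot\nabla w}_{L_T^{\infty,\gamma}C^{s-2}(\rho_D)}\lesssim_T(1+K^{\kappa+5\varepsilon})K.
\]
Applying the same Schauder bound gives the second term.

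The main obstacle is the time part of the $\mathbb S$ norm, namely the $C_T^{s/2,\gamma}L^\infty(\rho_D)$ seminorm. Here $s/2=1-\varepsilon/4$ is close to $1$, and the forcing has negative spatial regularity $-\varepsilon/2$. We expect Lemma~\ref{lem:mild-time-increments} to give exactly the parabolic pairing: regularity $s-2$ in space yields $s/2$ time increments with the same time weight. If that lemma only gives increments of order strictly below $s/2$, the fallback is to place the forcing in $C^{-\varepsilon/4}$ instead, using the slack $D_L<D$ and $L^\infty\subset C^{-\varepsilon/4}$, so that the output lands in $C^{2-\varepsilon/4}\supset$ the required regularity. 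Finally we add the two contributions, with constants independent of $R,R_1$, to conclude \eqref{eq:schauder-before-cutoff}.
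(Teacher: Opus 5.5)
Your proposal is correct and is essentially the paper's proof. The paper also writes $w$ by Duhamel and applies Lemma~\ref{lem:mild-time-increments}. It controls $f_R$ through the embedding $L^\infty(\rho_{D_L})\hookrightarrow C^{-\varepsilon/2}(\rho_D)$, and it bounds $B_R(u)\cdot\nabla w$ as a product using $D_{\rm dr}+D_\varepsilon<D$ from \eqref{eq:G1-weight-margin} together with the first inequality in \eqref{eq:time-product-margin-a}.

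The obstacle you anticipate does not arise. Lemma~\ref{lem:mild-time-increments}\textup{(i)} with $\sigma=s$ already gives the $C_T^{s/2,\gamma}L^\infty(\rho_D)$ increment bound with the same time weight $\gamma$, so your fallback is not needed.
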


\begin{proof}
Applying Lemma~\ref{lem:mild-time-increments} to \eqref{eq:w-transport} gives
\[
  S\leqslant C\Bigl(
  \|B_R(u)\cdot\nabla w\|_{L_T^{\infty,\gamma}C^{-\varepsilon/2}(\rho_D)}
  +\|f_R\|_{L_T^{\infty,\gamma}C^{-\varepsilon/2}(\rho_D)}\Bigr).
\]
Since \(D_L<D\),  the last term is controlled by \(\norm{f_R}_{\mathfrak N}\).  For the transport product, Lemma~\ref{lem:para} and the weight margin \(D_{\rm dr}+D_\varepsilon<D\), already checked in \eqref{eq:G1-weight-margin}, give
\[
  \|B_R(u)(t)\cdot\nabla w(t)\|_{C^{-\varepsilon/2}(\rho_D)}
  \lesssim
  \|B_R(u)(t)\|_{C^\varepsilon(\rho_{D_{\rm dr}})}
  \|w(t)\|_{C^{1+\varepsilon}(\rho_{D_\varepsilon})}.
\]
  Using \eqref{eq:B-time-bound} and \eqref{eq:K-def} yields
\[
  \|B_R(u)\cdot\nabla w\|_{L_T^{\infty,\gamma}C^{-\varepsilon/2}(\rho_D)}
  \lesssim \bigl(1+K^{\kappa+5\varepsilon}\bigr)K.
\]
This proves \eqref{eq:schauder-before-cutoff}. Here we used
\(
  \omega_B=\gamma(\kappa+5\varepsilon)\frac{1+\varepsilon}{s},
\)
and the time condition needed to multiply the two weighted factors is
\[
  \omega_B+\gamma\frac{1+\varepsilon}{s}\leqslant\gamma,
  \qquad\text{i.e.}\qquad
  \bigl(1+(\kappa+5\varepsilon)\bigr)\frac{1+\varepsilon}{s}\leqslant1,
\]
which follows from  \eqref{eq:time-product-margin-a}.
\end{proof}

\begin{theorem}\label{thm:apriori}
It holds that
\begin{equation}\label{eq:apriori-core}
  \norm{w}_{L_T^\infty L^\infty(\rho_{D_L})}
  +\norm{w}_{\mathbb S_T^{s,\gamma;s/2}(\rho_D)}
  +\norm{u_1^\sharp}_{\mathbb S_T^{r,\gamma;\alpha/2}(\rho_{D_1})}
  \leqslant C.
\end{equation}

\end{theorem}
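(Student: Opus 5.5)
The plan is to close the two scalar inequalities of Propositions~\ref{prop:max-step} and \ref{prop:schauder-step} in the unknowns $L$ and $S$. Once $L$ and $S$ are bounded, \eqref{eq:u1-r} together with the cutoff choices \eqref{eq:R-dynamic}--\eqref{eq:R1-dynamic} gives the bound on $u_1^\sharp$. To make the scalar argument legitimate, I first work with smooth approximations $\eta_n$ (or a short-time/continuity argument). For these, $L$ and $S$ are finite and depend continuously on $T$, so the a priori inequalities can be applied. I also assume $S\geqslant1$, since otherwise there is nothing to prove.

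\emph{Step 1: eliminate $L$.} Insert \eqref{eq:term-after-cutoff} into \eqref{eq:max-before-cutoff}, using $K^{\kappa+5\varepsilon}=L^{A_K}S^{B_K}$. Every term on the right is a monomial $L^{A}S^{B}$, and the total $L$-exponent is
\[
A_Kp_{\rm mp}+A_\alpha+A_K+\theta_0A_\alpha+\dots
\]
This is strictly less than $1$, because $p_{\rm mp}=O(D_L)$ is small and $A_\alpha+A_K+\theta_0A_\alpha<1$. The latter reduces at $\varepsilon=0$ to
\[
\tfrac12\kappa+\tfrac{1+\kappa}{2}\bigl(1+\theta_0\bigr)<\dots,
\]
and I would check it explicitly from \eqref{eq:time-product-margin-a}. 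Each monomial $\lambda L^AS^B$ with $A<1$ can then be absorbed by Young's inequality, $\lambda L^AS^B\leqslant\tfrac12L+C(\lambda S^{B})^{1/(1-A)}$. The result is $L\lesssim 1+S^{q}$ for an explicit $q$. The dominant contributions are $S^{\Theta}$ (times the small power from $p_{\rm mp}$) and $S^{B_\alpha/(1-A_\alpha)}$. By construction $B_\alpha/(1-A_\alpha)=1-\frac{s}{\alpha}c_R/\Gamma_R\leqslant1-m_\varepsilon/\Gamma_R=\theta$. So $L\lesssim 1+S^{\theta+\delta}$, where $\delta\to0$ as $\mu_{\rm wt}\to0$ and $\varepsilon\to0$.

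\emph{Step 2: Schauder.} Insert this into \eqref{eq:schauder-before-cutoff}. The term $K(1+K^{\kappa+5\varepsilon})$ equals $L^{1-\frac{1+\varepsilon}{s}}S^{\frac{1+\varepsilon}{s}}$ times $(1+L^{A_K}S^{B_K})$. With $L\lesssim S^{\theta+\delta}$, its $S$-power is
\[
\Bigl(1+(\kappa+5\varepsilon)\Bigr)\Bigl(\Bigl(1-\tfrac{1+\varepsilon}{s}\Bigr)(\theta+\delta)+\tfrac{1+\varepsilon}{s}\Bigr).
\]
This is $<1$ exactly because $\theta+\delta<\ell_+$, by \eqref{eq:ell-window} and the definition \eqref{eq:theta-ell-def}. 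The strict window $\theta<\ell<\ell_+$ absorbs $\delta$ once $\mu_{\rm wt}$ is small.

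The forcing term $\norm{f_R}_{\mathfrak N}$ is bounded by \eqref{eq:term-after-cutoff} and gives the powers $S^\Theta$ and $(S^{\theta})^{A_\alpha}S^{B_\alpha}$. The latter has power $<\theta<1$, since $A_\alpha\theta+B_\alpha\leqslant\theta$ is equivalent to $B_\alpha\leqslant\theta(1-A_\alpha)$, which is the same identity as before. The $K$-monomials likewise have powers below $1$ by the second inequality in \eqref{eq:time-product-margin-a}. Finally $\theta_\Psi\to2\kappa/(1-\kappa)<1$. Thus $S\leqslant C(1+S^{p})$ with $p<1$, so $S\leqslant C$, and then $L\leqslant C$.

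\emph{Main obstacle.} I expect the main difficulty to be the bookkeeping that every exponent is strictly below $1$ uniformly. In particular, the small loss $p_{\rm mp}$ from Lemma~\ref{lem:max} multiplies $K$-powers and must not destroy the margin $\ell_+-\theta$. I would handle this by writing all exponents as continuous functions of $(\varepsilon,\mu_{\rm wt})$, verifying the strict inequalities at $\varepsilon=\mu_{\rm wt}=0$ where they reduce to $\kappa^2+4\kappa-1<0$, and then shrinking the parameters. A further issue is that $R,R_1$ depend on $S$, which is a global quantity. This is allowed because Propositions~\ref{prop:catalogue}--\ref{prop:schauder-step} hold for every fixed admissible cutoff, so I would fix the cutoff after computing $S$ for the given (approximate) solution, and use Remark~\ref{rem:cutoff-equivalence}.
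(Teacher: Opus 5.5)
Your route is essentially the paper's. You eliminate $L$ through the maximum principle to get $L\lesssim 1+S^{\Theta_{\rm mp}}$, feed this into the Schauder bound, and handle $K^{1+\kappa+5\varepsilon}$ by Young's inequality. Your structural observations are a neat shortcut past the paper's $\varepsilon\to0$ limit computations:
\begin{itemize}
\item $B_\alpha/(1-A_\alpha)=1-\frac{s}{\alpha}\frac{2\alpha-1-\kappa-2\varepsilon}{\Gamma_R}\leqslant\theta$ holds by the very definition of $m_\varepsilon$.
\item The Schauder closure $\Theta_{\rm mp}M_{\kappa,\varepsilon}<1$ is literally equivalent to $\Theta_{\rm mp}<\ell_+$, so it follows from \eqref{eq:ell-window} once $p_{\rm mp}$ is small.
\end{itemize}

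\textbf{The claim you must drop.} The ``total $L$-exponent'' $A_\alpha+A_K+\theta_0A_\alpha$ tends to $(3+5\kappa)/4$ as $\varepsilon\to0$. This is $\geqslant1$ for $\kappa\geqslant 1/5$, so it cannot be verified from \eqref{eq:time-product-margin-a} or by any other means. It is also not what the argument needs: $L$-exponents of different summands do not add. The per-monomial Young absorption you then use only requires $A<1$ for each pair in $\mathscr E_L(p_{\rm mp})$. The largest of these is $A_\alpha+p_{\rm mp}A_K\to(1+\kappa)/2$.

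\textbf{Steps you should verify rather than assert.}
\begin{itemize}
\item In Step 1, the two $K$-pairs must have $\Pi$-value below $\theta$. Their limits are $\kappa/(2-\kappa)$ and $4\kappa/(3(1-\kappa))$; the latter is below $(1+\kappa)/2$ iff $3\kappa^2+8\kappa-3<0$.
\item Step 1 also needs $\theta_\Psi\leqslant\theta$, so that $\Theta=\theta$, and not merely $\theta_\Psi<1$. This holds for small $\varepsilon$ because $2\kappa/(1-\kappa)<(1+\kappa)/2$.
\item In Step 2, the $K$-monomials must have $S$-power below $1$. The second inequality in \eqref{eq:time-product-margin-a} is a time-weight condition and does not give this. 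The limits $\frac\kappa2\frac{1+\kappa}{2}+\frac\kappa2$ and $\bigl(\frac\kappa2+\frac{1+\kappa}{4}\bigr)\frac{1+\kappa}{2}+\kappa$ are, however, well below $1$ on the range.
\end{itemize}

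\textbf{On the cutoffs.} Your proposal to fix the cutoffs ``after computing $S$'' is circular as stated, since $w$, and hence $S$, itself depends on $(R,R_1)$. The paper is equally implicit on this point, so it is not a divergence from its argument.
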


\begin{proof}
Using Proposition~\ref{prop:max-step}, we obtain
\[
\begin{aligned}
  L\leqslant C(1+L^{A_K}S^{B_K})^{p_{\rm mp}}
  \Big(&1+S^\Theta+L^{A_\alpha}S^{B_\alpha}
  +L^{A_K}S^{B_K}
  \bigl(1+L^{\theta_0A_\alpha}S^{\theta_0B_\alpha}\bigr)\Big).
\end{aligned}
\]

At this point the analytic estimates have been reduced to finitely many monomials $L^AS^B$. For \(A<1\),
\begin{equation*}
  X\leqslant C(1+X^AY^B)
  \quad\Longrightarrow\quad
  X\leqslant C(1+Y^{B/(1-A)}).
\end{equation*}
Each monomial $L^AS^B$
 is represented by its exponent pair $e=(A,B)$.
For a  pair $e=(A,B)$ with $A<1$, write
\begin{equation}\label{eq:Pi-def}
  \Pi(e):=\frac{B}{1-A}.
\end{equation}
The exponent pairs occurring in the maximum-principle estimate are collected in the lists
\begin{equation}\label{eq:E0-set}
\begin{aligned}
  \mathscr E_0:=\bigl\{ &(0,0),\ (0,\Theta),\ (A_\alpha,B_\alpha),\ (A_K,B_K),\\
  &(A_K+\theta_0A_\alpha,\ B_K+\theta_0B_\alpha)\bigr\},
\end{aligned}
\end{equation}
\begin{equation}\label{eq:EL-set}
  \mathscr E_L(p):=\mathscr E_0\cup\{e+(pA_K,pB_K):e\in\mathscr E_0\}.
  \qquad \mbox{Set}\;\;
  \Theta_{\rm mp}(p):=\max_{e\in\mathscr E_L(p)}\Pi(e).
\end{equation}
Therefore, by the definitions of \( \mathscr E_L(p)\), \( \Theta_{\rm mp}(p)\), and \( \Pi \) in \eqref{eq:Pi-def}--\eqref{eq:EL-set}, any inequality of the form
\begin{equation*}
  L\leqslant C\sum_j(1+L^{A_j}S^{B_j}),
  \quad (A_j,B_j)\in\mathscr E_L(p_{\rm mp}),
  \quad A_j<1,
\end{equation*}
implies
\begin{equation}\label{eq:scalar-L-output}
  L\leqslant C(1+S^{\Theta_{\rm mp}}),
  \qquad
  \Theta_{\rm mp}=\max_{(A,B)\in\mathscr E_L(p_{\rm mp})}\frac{B}{1-A}.
\end{equation}

Combining \eqref{eq:schauder-before-cutoff} with \eqref{eq:term-after-cutoff} gives
\begin{equation*}
\begin{aligned}
S\leqslant C\Big(&1+S^\Theta
+L^{A_\alpha}S^{B_\alpha}
+L^{A_K}S^{B_K}
+L^{A_K+\theta_0A_\alpha}S^{B_K+\theta_0B_\alpha}\\
&+(1+K^{\kappa+5\varepsilon})K\Big).
\end{aligned}
\end{equation*}
Using \eqref{eq:scalar-L-output}, the  monomials satisfy
\begin{equation*}
\begin{gathered}
  L^{A_\alpha}S^{B_\alpha}\lesssim1+S^{A_\alpha\Theta_{\rm mp}+B_\alpha},\\
  L^{A_K}S^{B_K}\lesssim1+S^{A_K\Theta_{\rm mp}+B_K},
  \qquad K\lesssim1+S^{(1-\frac{1+\varepsilon}{s})\Theta_{\rm mp}+\frac{1+\varepsilon}{s}},\\
  L^{A_K+\theta_0A_\alpha}S^{B_K+\theta_0B_\alpha}
  \lesssim
  1+S^{(A_K+\theta_0A_\alpha)\Theta_{\rm mp}+B_K+\theta_0B_\alpha}.
\end{gathered}
\end{equation*}
So the mixed terms $L^A S^B$
 are bounded by pure powers of $S$. We collect the resulting exponents:
\begin{equation}\label{eq:ES-set}
\begin{aligned}
  \mathscr E_S(\Theta_*):=\bigl\{&\Theta,\;
  A_\alpha\Theta_*+B_\alpha,\;
  A_K\Theta_*+B_K,\\
  &\left(1-\frac{1+\varepsilon}{s}\right)\Theta_*+\frac{1+\varepsilon}{s},\;
  (A_K+\theta_0A_\alpha)\Theta_*+B_K+\theta_0B_\alpha\bigr\}.
\end{aligned}
\end{equation}
By Young's inequality (using \eqref{eq:time-product-margin-a})
\begin{equation*}
\begin{aligned}
  K^{1+(\kappa+5\varepsilon)}
  &=L^{(1+(\kappa+5\varepsilon))(1-\frac{1+\varepsilon}{s})}S^{(1+(\kappa+5\varepsilon))\frac{1+\varepsilon}{s}} \\
  &\leqslant \widetilde\varepsilon S+C_{\widetilde\varepsilon}L^{M_{\kappa,\varepsilon}},\\
  &\lesssim \widetilde\varepsilon S
  +C_{\widetilde\varepsilon}\bigl(1+S^{\Theta_{\rm mp}M_{\kappa,\varepsilon}}\bigr).
\end{aligned}
\end{equation*}
Here
\begin{equation}\label{eq:qeps-M}
  M_{\kappa,\varepsilon}:=
  \frac{(1+(\kappa+5\varepsilon))\left(1-\frac{1+\varepsilon}{s}\right)}{1-(1+(\kappa+5\varepsilon))\frac{1+\varepsilon}{s}}.
\end{equation}

So we obtain
\begin{equation*}
  S\leqslant C\left(1+
  \sum_{q\in\mathscr E_S(\Theta_{\rm mp})}S^q
  +S^{\Theta_{\rm mp}M_{\kappa,\varepsilon}}\right).
\end{equation*}
\noindent\textit{Claim (scalar closure).}
There exists \(p_*>0\) such that, after possibly decreasing \(\varepsilon, \mu_{\rm wt}\), the scalar inequalities hold:
\begin{equation}\label{eq:core-close}
  \Theta M_{\kappa,\varepsilon}<1,
\end{equation}
\begin{equation}\label{eq:full-close}
\begin{gathered}
  A<1\quad\hbox{for every }(A,B)\in\mathscr E_L(p),\\
  \Theta_{\rm mp}(p)M_{\kappa,\varepsilon}<1,
  \qquad
  \max \mathscr E_S(\Theta_{\rm mp}(p))<1,
  \qquad 0<p\leqslant p_*.
\end{gathered}
\end{equation}

 Thus,
\begin{equation*}
  S\leqslant C, \qquad L\leqslant C.
\end{equation*}
Using these bounds, and by \eqref{eq:u1-r} in Proposition~\ref{prop:first-layer}, \eqref{eq:cut-low-R}, we have
\[
  \norm{u_1^\sharp}_{\mathbb S_T^{r,\gamma;\alpha/2}(\rho_{D_1})}\leqslant C.
\]
This proves \eqref{eq:apriori-core}.

Below we verify  the above scalar inequalities.

\smallskip

 As \(\varepsilon\downarrow0\), the definitions \eqref{eq:s-r-gamma}, \eqref{eq:theta-theta0}, \eqref{eq:m-GammaR}, and \eqref{eq:GammaR-def} give
\[
  \alpha\uparrow1-\kappa,
  \qquad s\to2,
  \qquad \frac{1+\varepsilon}{s}\to\frac12,
  \qquad \theta_0\to\frac12,
  \qquad \gamma\to\frac{1-\kappa}{2},
  \qquad r\to1+\kappa.
\]
Moreover,
\[
  2\alpha-1-\kappa-2\varepsilon\to1-3\kappa,
  \qquad
  \frac{s}{\alpha}(2\alpha-1-\kappa-2\varepsilon)\to\frac{2(1-3\kappa)}{1-\kappa}.
\]
As in the proof of Lemma~\ref{lem:split-weights}, for \(\kappa<\sqrt5-2\),
\[
  m_\varepsilon\to1-\kappa,
  \qquad
  \Gamma_R=1+\kappa+\varepsilon+m_\varepsilon\to2.
\]
Using \eqref{eq:theta-ell-def}, \eqref{eq:thetaPsi}, and \eqref{eq:qeps-M}, we obtain
\begin{equation}\label{eq:prop61-limits-main}
  \theta\to\frac{1+\kappa}{2},
  \qquad
  \theta_\Psi\to\frac{2\kappa}{1-\kappa},
  \qquad
  M_{\kappa,\varepsilon}\to\frac{1+\kappa}{1-\kappa}.
\end{equation}

\smallskip
Now we prove  \eqref{eq:core-close}. Recall $ \Theta:=\theta\vee\theta_\Psi$.
Note that
\[
  \frac{2\kappa}{1-\kappa}<\frac{1+\kappa}{2}
  \quad\Longleftrightarrow\quad
  \kappa^2+4\kappa-1<0.
\]
Thus \(\Theta\to(1+\kappa)/2\).  By \eqref{eq:prop61-limits-main}, the limiting form of \eqref{eq:core-close} is
\[
  \frac{1+\kappa}{2}\cdot\frac{1+\kappa}{1-\kappa}<1,
\]
which is equivalent to \(\kappa^2+4\kappa-1<0\).  So \eqref{eq:core-close} holds after decreasing \(\varepsilon\) if necessary.

\smallskip
Now we turn to proving \eqref{eq:full-close}. We first consider the set $\mathscr E_0$.
  By \eqref{eq:Aalpha-Balpha}--\eqref{eq:AK-BK},
\[
  A_\alpha\to\frac{1+\kappa}{2},
  \qquad B_\alpha\to\kappa,
  \qquad A_K\to\frac\kappa2,
  \qquad B_K\to\frac\kappa2.
\]
Applying \(\Pi(A,B)=B/(1-A)\), see \eqref{eq:Pi-def}, gives 
\begin{align*}
  \Pi(A_\alpha,B_\alpha)&\to\frac{2\kappa}{1-\kappa},\\
  \Pi(A_K,B_K)&\to\frac{\kappa}{2-\kappa},\\
  \Pi(A_K+\theta_0A_\alpha,B_K+\theta_0B_\alpha)&\to\frac{4\kappa}{3(1-\kappa)}.
\end{align*}
Each limit is strictly smaller than \((1+\kappa)/2\) on the present range of $\kappa$.  The last comparison follows from \(3\kappa^2+8\kappa-3<0\), which holds for \(0<\kappa<\sqrt5-2\).  Hence all pairs in the base set \(\mathscr E_0\) of \eqref{eq:E0-set} have \(A<1\) and have \(\Pi(A,B)\) below
\((1+\kappa)/2\), 
 after decreasing \(\varepsilon\) if necessary.

Next consider the set \(\mathscr E_L(p)\)
 in \eqref{eq:EL-set} where one  replaces each  base pair \((A,B)\) by \((A+pA_K,B+pB_K)\).
For each fixed base pair, the map
\[
  (p,A,B)\longmapsto \frac{B+pB_K}{1-A-pA_K}
\]
is continuous as long as the denominator is positive.  Since the denominator is positive at \(p=0\) and the limiting inequalities are strict, there is a positive number \(p_*\) such that, uniformly for all sufficiently small \(\varepsilon\) and all \(0<p\leqslant p_*\), every pair in \(\mathscr E_L(p)\) has \(A<1\), and
\[
  \Theta_{\rm mp}(p)M_{\kappa,\varepsilon}<1.
\]
The actual maximum-principle exponent is \(p_{\rm mp}=D_L/(1-D_{\rm dr})\), which tends to zero as \(\mu_{\rm wt}\downarrow0\).  We choose the weight  so that \(0<p_{\rm mp}\leqslant p_*\).  This proves the first assertion in \eqref{eq:full-close} for the value of \(p\) used in \eqref{eq:max-before-cutoff}.

\smallskip
It remains to check the exponents in \(\mathscr E_S\).  At the limiting value \(\Theta_*=(1+\kappa)/2\), 
\[
  A_\alpha\Theta_*+B_\alpha\to\left(\frac{1+\kappa}{2}\right)^2+\kappa<1,
  \qquad
  A_K\Theta_*+B_K\to\frac\kappa2\frac{1+\kappa}{2}+\frac\kappa2<1,
\]
\[
  \left(1-\frac{1+\varepsilon}{s}\right)\Theta_*+\frac{1+\varepsilon}{s}
  \to\frac12\frac{1+\kappa}{2}+\frac12=\frac{3+\kappa}{4}<1,
\]
\[
  (A_K+\theta_0A_\alpha)\Theta_*+B_K+\theta_0B_\alpha
  \to\left(\frac\kappa2+\frac{1+\kappa}{4}\right)\frac{1+\kappa}{2}+\kappa<1.
\]
All these inequalities are strict for \(\kappa<\sqrt5-2\).  Since \(\Theta_{\rm mp}(p)\) stays uniformly close to \((1+\kappa)/2\) for \(0<p\leqslant p_*\), continuity gives, after decreasing \(\varepsilon\) and \(p_*\), if necessary,
\[
  \max \mathscr E_S(\Theta_{\rm mp}(p))<1
\]
for all admissible \(\varepsilon\) and \(0<p\leqslant p_*\).  This proves the last assertion in \eqref{eq:full-close}.

\end{proof}


\begin{corollary}\label{cor:apriori-low-w}
It holds that
\begin{equation}\label{eq:apriori-low-w}
  \norm{w}_{C_T C^{1-\kappa}(\rho_D)}\leqslant C.
\end{equation}
Consequently,
\begin{equation}\label{eq:apriori}
\begin{aligned}
  &\norm{w}_{C_T C^{1-\kappa}(\rho_D)}
  +\norm{w}_{L_T^\infty L^\infty(\rho_{D_L})}
  +\norm{w}_{\mathbb S_T^{s,\gamma;s/2}(\rho_D)} \\
  &\quad
  +\norm{u_1^\sharp}_{C_T C^{1-\kappa}(\rho_{d_0})}
  +\norm{u_1^\sharp}_{\mathbb S_T^{r,\gamma;\alpha/2}(\rho_{D_1})}
  \leqslant C.
\end{aligned}
\end{equation}
\end{corollary}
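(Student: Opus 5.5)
The plan is to bound $w$ in $C^{1-\kappa}(\rho_D)$ uniformly in time, without the singular time weight $t^\gamma$, directly from the mild formulation of \eqref{eq:w-transport}. Write
\[
  w(t)=\int_0^t P_{t-r}\bigl(B_R(u)\cdot\nabla w+f_R\bigr)(r)\,\mathrm{d}r .
\]
The cutoffs $R,R_1$ are now deterministic. Indeed, by Theorem~\ref{thm:apriori}, $S,L\leqslant C$, so \eqref{eq:R-dynamic} and \eqref{eq:R1-dynamic} fix $R,R_1$ bounded by a constant. Consequently $U_\alpha$, $K$, $T_\eta$ and all localization factors are bounded by constants.

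The key steps, in order, are as follows.

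First, Proposition~\ref{prop:catalogue} and Proposition~\ref{prop:max-step} give $\|f_R\|_{\mathfrak N}\leqslant C$. Thus
\[
  \sup_t t^\gamma\|f_R(t)\|_{L^\infty(\rho_{D_L})}\leqslant C,
\]
and hence, by the embedding $L^\infty(\rho_{D_L})\hookrightarrow C^{-\varepsilon/2}(\rho_D)$ used in Proposition~\ref{prop:schauder-step}, $f_R\in L_T^{\infty,\gamma}C^{-\varepsilon/2}(\rho_D)$ with bounded norm.

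Second, the proof of Proposition~\ref{prop:schauder-step} together with \eqref{eq:B-time-bound} and \eqref{eq:K-def} gives
\[
  \sup_t t^\gamma\|B_R\cdot\nabla w(t)\|_{C^{-\varepsilon/2}(\rho_D)}\leqslant C(1+K^{\kappa+5\varepsilon})K\leqslant C .
\]

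Third, Lemma~\ref{lem:mild-time-increments} (the weighted Schauder estimate for $\sI$) is applied with source regularity $-\varepsilon/2$ and target regularity $1-\kappa$. The smoothing gain needed is $1-\kappa+\varepsilon/2<2$, which yields
\[
  \Big\|\int_0^tP_{t-r}g(r)\,\mathrm{d}r\Big\|_{C^{1-\kappa}(\rho_D)}
  \lesssim\int_0^t(t-r)^{-(1-\kappa+\varepsilon/2)/2}r^{-\gamma}\,\mathrm{d}r\,\sup_r r^\gamma\|g(r)\|_{C^{-\varepsilon/2}(\rho_D)} .
\]
This integral is finite uniformly in $t\leqslant T$ because $\gamma<1$ and $(1-\kappa+\varepsilon/2)/2<1$. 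Since $w(0)=0$, continuity in time in $C^{1-\kappa}(\rho_D)$ follows from the same estimate applied to time increments, with a small loss in spatial regularity absorbed by interpolation. Alternatively, one obtains it by approximating the source.

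Finally, \eqref{eq:apriori} is assembled from three ingredients: \eqref{eq:apriori-low-w}, Theorem~\ref{thm:apriori} for $L$, $S$ and the $\mathbb S^{r}$ norm of $u_1^\sharp$, and \eqref{eq:u1-low} for $\|u_1^\sharp\|_{C_TC^{1-\kappa}(\rho_{d_0})}$. The last bound holds for all admissible cutoffs.

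The main obstacle is the time-singularity bookkeeping in the third step: one must check that the weight $r^{-\gamma}$ near $r=0$ does not prevent an unweighted $C_T$ bound. The condition reduces to $\gamma+(1-\kappa+\varepsilon/2)/2<1+\text{(anything)}$, i.e.\ just integrability of two separate singularities, which holds since each exponent is below $1$. If Lemma~\ref{lem:mild-time-increments} is only stated with output time weight, I would instead argue by interpolation between $L^\infty(\rho_{D_L})$ (bounded, no time weight) and $C^s(\rho_D)$ (weight $t^\gamma$). However, this gives a time weight $t^{\gamma(1-\kappa)/s}$, so the direct Duhamel computation above is the preferred route.
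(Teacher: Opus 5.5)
Your route is the paper's: you bound $f_R$ in $\mathfrak N$ via \eqref{eq:term-after-cutoff} once $L,S$ are bounded, bound $t^\gamma\|B_R(u)\cdot\nabla w\|_{C^{-\varepsilon/2}(\rho_D)}$ by $(1+K^{\kappa+5\varepsilon})K$, and then apply the heat-semigroup smoothing from $C^{-\varepsilon/2}(\rho_D)$ to $C^{1-\kappa}(\rho_D)$ inside the Duhamel formula. The paper cites \cite[Lemma~2.8]{ZZZ22} for that smoothing step. Lemma~\ref{lem:mild-time-increments} is not the right citation, since as stated it carries the same time weight on both sides.

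\textbf{The gap.} The step you yourself flag as the main obstacle is justified incorrectly. Integrability of the two singularities separately only makes
\[
  \int_0^t(t-\tau)^{-a}\tau^{-\gamma}\,\mathrm d\tau
  =\mathrm B(1-a,1-\gamma)\,t^{1-a-\gamma},
  \qquad a:=\tfrac{1-\kappa+\varepsilon/2}{2},
\]
finite for each fixed $t>0$. It is bounded uniformly on $(0,T]$ only if $a+\gamma\leqslant 1$. For example, with $a=\gamma=0.9$ the integral behaves like $t^{-0.8}$ and blows up as $t\downarrow0$. That is exactly the small-time regime where the weight $\tau^{-\gamma}$ on the forcing matters. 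Your condition ``$\gamma+a<1+(\text{anything})$'' is therefore not the right criterion, and a genuine parameter check is required.

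\textbf{The fix.} It is the computation the paper performs. From \eqref{eq:alpha-eps-choice} and \eqref{eq:s-r-gamma},
\[
  \gamma=\frac{2\alpha+\kappa-1}{2}+\varepsilon=\frac{1-\kappa}{2}-4\varepsilon,
  \qquad\text{hence}\qquad
  1-\gamma-\frac{1-\kappa+\varepsilon/2}{2}=\kappa+\frac{15}{4}\varepsilon>0.
\]
This gives $\sup_{0<t\leqslant T}\|w(t)\|_{C^{1-\kappa}(\rho_D)}\lesssim T^{\kappa+15\varepsilon/4}\leqslant C$. The positive exponent also shows $w(t)\to0$ in $C^{1-\kappa}(\rho_D)$ as $t\downarrow0$, which is relevant to your continuity remark. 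With this check inserted, the rest of your argument matches the paper, including the assembly of \eqref{eq:apriori} from Theorem~\ref{thm:apriori} and \eqref{eq:u1-low}.
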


\begin{proof}
 The estimate \eqref{eq:term-after-cutoff} bounds \(f_R\) in \(\mathfrak N\). By Lemma~\ref{lem:para}, we obtain
\begin{equation}\label{eq:BR-grad-w-NS-proof}
  \norm{B_R(u)(t)\cdot\nabla w(t)}_{C^{-\varepsilon/2}(\rho_D)}
  \lesssim
  \norm{B_R(u)(t)}_{C^\varepsilon(\rho_{D_{\rm dr}})}
  \norm{w(t)}_{C^{1+\varepsilon}(\rho_{D_\varepsilon})},
\end{equation}
because \(D_{\rm dr}+D_\varepsilon<D\) by \eqref{eq:G1-weight-margin}.  Recall
\(
  \omega_B=\gamma(\kappa+5\varepsilon)\frac{1+\varepsilon}{s}.
\)
 Multiplying \eqref{eq:BR-grad-w-NS-proof} by \(t^\gamma\) and splitting the time weights gives
\begin{align*}
  t^\gamma
  \norm{B_R(u)(t)\cdot\nabla w(t)}_{C^{-\varepsilon/2}(\rho_D)}
  &\lesssim
  t^{\gamma-\omega_B-\gamma\frac{1+\varepsilon}{s}}
  \bigl(t^{\omega_B}\norm{B_R(u)(t)}_{C^\varepsilon(\rho_{D_{\rm dr}})}\bigr)
  \bigl(t^{\gamma\frac{1+\varepsilon}{s}}\norm{w(t)}_{C^{1+\varepsilon}(\rho_{D_\varepsilon})}\bigr)\\
  &\lesssim (1+K^{\kappa+5\varepsilon})K.
\end{align*}
Here we used \eqref{eq:B-time-bound} and \eqref{eq:K-def}, and \(\gamma-\omega_B-\gamma(1+\varepsilon)/s\geqslant0\) follows from \eqref{eq:time-product-margin-a}.  Since \(L\) and \(S\) are bounded, the interpolation quantity \(K=L^{1-(1+\varepsilon)/s}S^{(1+\varepsilon)/s}\) is bounded.  Therefore
\begin{equation*}
  B_R(u)\cdot\nabla w\in L_T^{\infty,\gamma}C^{-\varepsilon/2}(\rho_D).
\end{equation*}
By  \cite[Lemma 2.8]{ZZZ22}, for \(0<t\leqslant T\),
\[
\begin{aligned}
  \norm{w(t)}_{C^{1-\kappa}(\rho_D)}
  &\lesssim \int_0^t
  (t-\tau)^{-\frac{1-\kappa+\varepsilon/2}{2}}
  \norm{ (B_R(u)\cdot\nabla w+ f_R)(\tau)}_{C^{-\varepsilon/2}(\rho_D)}\,\mathrm{d}\tau  \\
  &\leqslant
  \int_0^t
  (t-\tau)^{-\frac{1-\kappa+\varepsilon/2}{2}}\tau^{-\gamma}\,\mathrm{d}\tau .
\end{aligned}
\]
The parameter choice \eqref{eq:alpha-eps-choice} gives
\[
  \gamma=\frac{2\alpha+\kappa-1}{2}+\varepsilon
  =\frac{1-\kappa}{2}-4\varepsilon,
\]
and hence
\[
  1-\gamma-\frac{1-\kappa+\varepsilon/2}{2}
  =\kappa+\frac{15}{4}\varepsilon>0.
\]
Thus \eqref{eq:apriori-low-w} follows. Combining \eqref{eq:apriori-low-w} with \eqref{eq:apriori-core}, as well as \eqref{eq:u1-low} gives \eqref{eq:apriori}.
\end{proof}

\begin{remark}\label{rem:szz24-parameter-comparison}
The improvement of the admissible range of $\kappa$ 
can be explained  at the unweighted  level
(the polynomial weights only introduce the small maximum-principle loss $p_{\rm mp}$ and the weight constraints).
In the present argument we take
\begin{equation*}
  \alpha=1-\kappa-5\varepsilon,
  \qquad
  s=2-\frac{\varepsilon}{2},
  \qquad
  r=1+\kappa+7\varepsilon,
\end{equation*}
and estimate $u_1^\sharp$ in $\mathbb S_T^{r,\gamma;\alpha/2}$. 
(Thus, unlike  \cite{SZZ24}, the spatial regularity assigned to $u_1^\sharp$ is not fixed to be $2\alpha$ here.)
Note that regularity $r=2-\alpha+2\varepsilon$ can be deduced by Proposition~\ref{prop:first-layer}, while $r-1-\kappa=7\varepsilon>0$ guarantees that the resonant products involving $u_1^\sharp$
and $\eta_R^>$ are well-defined.

The high block $\eta_R^>$ is estimated in four regularity regimes:
\begin{equation*}
  C^{-(2\alpha-2\varepsilon)},
  \qquad C^{-1-\kappa},
  \qquad C^{\alpha-2},
  \qquad C^{-2+\varepsilon}.
\end{equation*}
These estimates are used at different points of the argument: the $C^{-(2\alpha-2\varepsilon)}$ bound gives the gain
$2^{-(2\alpha-1-\kappa-2\varepsilon)R}$
in the $u_1^\sharp$ estimate and in the commutator-type terms; the $C^{-1-\kappa}$ bound controls rough resonances; the $C^{\alpha-2}$ bound supplies small fixed-cutoff and equal-time estimates; and the $C^{-2+\varepsilon}$ bound is paired with the high regularity $s=2-\varepsilon/2$ of $w$.

By contrast, in \cite{SZZ24} $u_1^\sharp$ is estimated in spatial regularity $2\alpha$ with $\alpha$ close to $2/3$, and the corresponding scalar balance gives a stronger obstruction.
 The final scalar exponent is $2\kappa/(1-\kappa)$, which is below $(1+\kappa)/2$ in the present range.
\end{remark}

\section{Proof of uniqueness}\label{sec:difference-details}

This section proves uniqueness from the a priori construction of the previous sections.  We compare two paracontrolled solutions with the same initial condition in a time-dependent exponentially weighted topology.
We prove uniqueness directly in the solution space of Definition~\ref{def:pc-solution}. Let $(u,u^\sharp)$ and $(v,v^\sharp)$ be two solutions with finite norms in the corresponding spaces. Throughout this section, for any solution-dependent quantity \(A\), write
\begin{equation*}
	\dlt A:=A^u-A^v .
\end{equation*}
Now let \(\rho_{M_a}=\la x\ra ^{-M_a}\) be polynomial weights with \(M_a\) the maximum for all polynomial weights of the solutions. By decreasing \(\mu_{\rm wt}\), we may and do
assume \(M_a=O(\mu_{\rm wt}^{1/2})\) is arbitrarily small.

In this section, we use a new split of the noise. To this end, we introduce
\begin{equation*}
	\sigma_{\eta,\alpha}^{\rm br}:=
	\frac{1-\kappa-\alpha}{2\alpha-1-\kappa-2\varepsilon}(s_0+\sigma_\eta)-s_0.
\end{equation*}
We could take \(\sigma_{\eta,\alpha}^{\rm br}>0\), because \(\sigma_\eta\) is a fixed positive multiple of \(D_L=\mu_{\rm wt}^{1/2}\), while \(s_0=O(\mu_{\rm wt})\).
 We choose
\[
\sigma_{\eta,\alpha}:=\sigma^{\rm br}_{\eta,\alpha}+4M_a.
\]
The extra \(4M_a\) is reserved for the terms where no
exponential time gap is available.
Now we replace the first-level annular threshold by
\begin{equation*}
	J_{k}^{\eta,u}
	:=\left\lceil R+\frac{s_0+\sigma_{\eta,\alpha}}{1-\kappa-\alpha}k\right\rceil,
	\qquad k\geqslant0,
\end{equation*}
whose primary high target is \(\alpha-2\).  In this section only, we keep the notation
\begin{equation*}
	\eta_R^>:=\sum_{k\geqslant0}\widetilde\chi_k\Delta_{>J_k^{\eta,u}}(\chi_k\eta),
	\qquad
	\eta_R^\leqslant:=\eta-\eta_R^>,
\end{equation*}
for this uniqueness representative.   The corresponding low block has a fixed polynomial growth exponent.  We name it explicitly:
\begin{equation*}
	d_{\eta,u}
	:=s_0+(1+\kappa+\varepsilon)\frac{s_0+\sigma_{\eta,\alpha}}{1-\kappa-\alpha}.
\end{equation*}
For the two additional rough regularities used later, set
\begin{equation*}
	\begin{aligned}
		\sigma_{\eta,u}^{(2\alpha)}
		&:=\frac{2\alpha-1-\kappa-2\varepsilon}{1-\kappa-\alpha}(s_0+\sigma_{\eta,\alpha})-s_0,\\
		\sigma_{\eta,u}^{(-2+\varepsilon)}
		&:=\frac{1-\kappa-\varepsilon}{1-\kappa-\alpha}(s_0+\sigma_{\eta,\alpha})-s_0.
	\end{aligned}
\end{equation*}
By Lemma~\ref{lem:bridge-localization}, it holds that
\begin{equation}\label{eq:eta-uniq-split-bounds}
	\begin{aligned}
		\|\eta_R^>\|_{L_T^\infty C^{\alpha-2}(\rho_{-\sigma_{\eta,\alpha}})}
		&\lesssim 2^{-(1-\kappa-\alpha)R},\\
		\|\eta_R^>\|_{L_T^\infty C^{-(2\alpha-2\varepsilon)}(\rho_{-\sigma_{\eta,u}^{(2\alpha)}})}
		&\lesssim 2^{-(2\alpha-1-\kappa-2\varepsilon)R},\\
		\|\eta_R^>\|_{L_T^\infty C^{-2+\varepsilon}(\rho_{-\sigma_{\eta,u}^{(-2+\varepsilon)}})}
		&\lesssim 2^{-(1-\kappa-\varepsilon)R},\\
		\|\eta_R^>\|_{L_T^\infty C^{-1-\kappa}(\rho_{s_0})}
		+\|\eta_R^\leqslant\|_{L_T^\infty C^{-1-\kappa}(\rho_{s_0})}
		&\lesssim 1,\\
		\|\eta_R^\leqslant\|_{L_T^\infty C^\varepsilon(\rho_{d_{\eta,u}})}
		&\lesssim 2^{(1+\kappa+\varepsilon)R}.
	\end{aligned}
\end{equation}
	Consequently, by Lemma~\ref{lem:mild-time-increments},
	\begin{equation}\label{eq:Ieta-high-alpha-decay}
		\|\sI\eta_R^>\|_{\mathbb{S}_T^{\alpha,0;\alpha/2}(\rho_{-\sigma_{\eta,\alpha}})}
		\lesssim 2^{-(1-\kappa-\alpha)R}.
	\end{equation}


We still use the following notation:
\[
	\cR(u):=F(u)-F'(u)\para u,
	\qquad H(u):=F(u)F'(u),\]
\begin{equation*}
	u_R^\sharp:=u-F(u)\para\sI\eta_R^>
	=u^\sharp+F(u)\para\sI(\eta_R^\leqslant).
\end{equation*}
We also define
\begin{equation}\label{eq:single-term-def}
	\begin{aligned}
		\mathcal N_R(u,u_R^\sharp):={}&F(u)\Par\eta_R^>
		+F(u)\eta_R^\leqslant
		+\cR(u)\res\eta_R^>
		+(F'(u)\para u_R^\sharp)\res\eta_R^> \\
		&+\operatorname{Com}(F'(u),F(u)\para\sI\eta_R^>,\eta_R^>)
		+F'(u)\operatorname{Com}(F(u),\sI\eta_R^>,\eta_R^>) \\
		&+H(u)\bigl(\Psi_{R_1}^>+\Theta_{R,R_1}\bigr).
	\end{aligned}
\end{equation}

\begin{lemma}\label{lem:single-cutoff-product-identity}
	Fix  cutoffs \((R,R_1)\), and let \((u,u^\sharp)\) be a  solution in the sense of Definition~\ref{def:pc-solution}.  Put
	\[
	u_R^\sharp:=u-F(u)\para\sI\eta_R^>.
	\]
	Then, in distributions,
	\begin{equation}\label{eq:single-cutoff-product-identity}
		F(u)\eta-F(u)\para\eta_R^>
		=\mathcal N_R(u,u_R^\sharp).
	\end{equation}
	Consequently \(u_R^\sharp\) satisfies
	\begin{equation}\label{eq:single-cutoff-mild}
		u_R^\sharp=P_tu_0+[\sI,F(u)\para]\eta_R^>+\sI\mathcal N_R(u,u_R^\sharp).
	\end{equation}
\end{lemma}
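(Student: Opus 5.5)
The identity \eqref{eq:single-cutoff-product-identity} is purely algebraic. I would prove it by comparing the definition \eqref{eq:pc-product} of $F(u)\eta$ with the expression $\mathcal N_R$ term by term, using bilinearity of $\para,\Par,\res$ and of the commutators. The mild formula \eqref{eq:single-cutoff-mild} then follows by applying $\sI$ to the equation.

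\emph{Step 1: split $\eta$.} Write $\eta=\eta_R^>+\eta_R^\leqslant$ in each term of \eqref{eq:pc-product}. The low piece $\eta_R^\leqslant$ lies in $C^\varepsilon(\rho_{d_{\eta,u}})$, so every product involving it is classical. The first two terms give
\[
F(u)\para\eta_R^>+F(u)\Par\eta_R^>+F(u)\eta_R^\leqslant,
\]
up to the remaining low-frequency resonant pieces that still have to be recombined. These are
\[
\bigl(F(u)-F'(u)\para u\bigr)\res\eta_R^\leqslant+(F'(u)\para u^\sharp)\res\eta_R^\leqslant+\dots
\]
Since all such products are classical, they resum to $F(u)\res\eta_R^\leqslant$ together with the low parts of the commutators. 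Subtracting $F(u)\para\eta_R^>$ leaves exactly the first two terms of $\mathcal N_R$ plus resonant terms against $\eta_R^>$.

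\emph{Step 2: change the reference.} In the terms paired with $\eta_R^>$, rewrite the ansatz as
\[
u=F(u)\para\sI\eta_R^>+u_R^\sharp,\qquad u_R^\sharp=u^\sharp+F(u)\para\sI\eta_R^\leqslant.
\]
Then
\[
(F'(u)\para u^\sharp)\res\eta_R^>=(F'(u)\para u_R^\sharp)\res\eta_R^>-\bigl(F'(u)\para(F(u)\para\sI\eta_R^\leqslant)\bigr)\res\eta_R^>.
\]
The commutators with $\sI\eta$ split into their $\sI\eta_R^>$ parts, which appear in $\mathcal N_R$, and their mixed $\sI\eta_R^\leqslant$ parts. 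By multilinearity, each mixed commutator expands into the classical products it is defined by. For instance,
\[
F'(u)\operatorname{Com}(F(u),\sI\eta_R^\leqslant,\eta_R^>)+F'(u)F(u)\,(\sI\eta_R^\leqslant\res\eta_R^>).
\]
The same holds for the analogous terms with $\sI\eta_R^>\res\eta_R^\leqslant$.

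\emph{Step 3: the second-order term.} Use $\Psi=\Psi_R^{>>}+\Psi_R^{\rm mix}=\Psi_{R_1}^>+\Theta_{R,R_1}+\Psi_R^{\rm mix}$ from \eqref{eq:inherited-high-second}. The term $H(u)\Psi_R^{\rm mix}$, which consists of the three classically defined mixed resonances, must cancel exactly against the products $F'(u)F(u)(\sI\eta^\bullet\res\eta^\bullet)$ generated in Step 2. All remaining classical pieces then telescope back, via Bony's decomposition of genuine products of smooth functions, into $F(u)\eta_R^\leqslant$ and the $\cR(u)\res\eta_R^>$ term.

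\emph{Main obstacle.} The delicate point is this bookkeeping, namely checking that nothing is left over, since $\Psi$ itself is not a product. To make it rigorous I would argue by approximation, using Definition~\ref{enhanced}. For smooth $\eta_n$ with $\Psi_n=\sI\eta_n\res\eta_n-c_n$, both sides of \eqref{eq:single-cutoff-product-identity}, built from the cutoff decomposition of $\eta_n$, reduce to the honest product $F(u)\eta_n-c_nH(u)$ minus $F(u)\para\eta_{n,R}^>$. In that setting the identity is Bony's decomposition plus the renormalization $c_n$, which sits only in the $>>$ part. The decomposition $\eta\mapsto(\eta_R^>,\eta_R^\leqslant)$ is linear and bounded by Lemma~\ref{lem:bridge-localization}, and every term is continuous in the enhanced noise in the weighted spaces; here one uses the paraproduct and commutator estimates with $u\in C^\alpha(\rho_{D_\alpha})$ and $u^\sharp\in C^r$. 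With $u$ fixed, one can therefore pass to the limit.

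\emph{Mild formula.} Finally, $\cL u=F(u)\eta$ gives
\[
u=P_tu_0+\sI(F(u)\para\eta_R^>)+\sI\mathcal N_R,
\]
and subtracting $F(u)\para\sI\eta_R^>$ yields \eqref{eq:single-cutoff-mild} by the definition of $[\sI,F(u)\para]$.
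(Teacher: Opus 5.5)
Your Steps 1--3 are correct and are, in substance, the paper's argument. You split $\eta=\eta_R^>+\eta_R^\leqslant$, expand the high part relative to the shifted remainder $u_R^\sharp=u^\sharp+F(u)\para\sI\eta_R^\leqslant$ with $\Psi_R^{>>}=\Psi_{R_1}^>+\Theta_{R,R_1}$ in place of $\Psi$, subtract $F(u)\para\eta_R^>$, and apply $\sI$. The paper only writes down this expansion of $F(u)\eta_R^>$. Your bookkeeping supplies the check that it agrees with \eqref{eq:pc-product}, which the paper leaves implicit (cf.\ Remark~\ref{rem:cutoff-equivalence}):
the mixed pieces of each commutator expand classically; their $H(u)\,\sI\eta^\bullet\res\eta^\bullet$ parts cancel $H(u)\Psi_R^{\rm mix}$; and the remaining low resonances recombine with $F(u)\para\eta_R^\leqslant+F(u)\Par\eta_R^\leqslant$ into $F(u)\eta_R^\leqslant$. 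The ansatz is used exactly once, in $(F'(u)\para(u-u^\sharp))\res\eta_R^\leqslant=(F'(u)\para(F(u)\para\sI\eta))\res\eta_R^\leqslant$.

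The approximation paragraph, however, does not work as written, and you do not need it.
\begin{itemize}
\item With $u$ fixed, the smooth-noise version of \eqref{eq:pc-product} reduces to $F(u)\eta_n-c_nH(u)$ only if the remainder is redefined as $u_n^\sharp:=u-F(u)\para\sI\eta_n=u^\sharp+F(u)\para\sI(\eta-\eta_n)$. This lies only in $C^{1-\kappa}$, not in $C^r$, so the continuity you invoke for $(F'(u)\para u_n^\sharp)\res\eta_n$ is unavailable. In fact it differs from $(F'(u)\para u^\sharp)\res\eta_n$ by a term whose leading part $H(u)\,\sI(\eta-\eta_n)\res\eta_n$ need not tend to zero.
\item If instead you keep $u^\sharp$ fixed, the ansatz fails at level $n$, and the two sides no longer both equal $F(u)\eta_n-c_nH(u)-F(u)\para\eta_{n,R}^>$. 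Their difference is $-\bigl(F'(u)\para(F(u)\para\sI(\eta-\eta_n))\bigr)\res\eta_{n,R}^\leqslant$. This does vanish in the limit, but you can only identify it by doing the exact algebra anyway.
\end{itemize}

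The concern that motivated the approximation is also unfounded. $\Psi$ enters only linearly, as $H(u)\Psi$ on one side and $H(u)\Psi_R^{>>}$ on the other, so the difference $H(u)\Psi_R^{\rm mix}$ is a classical product. The rough commutators are continuous trilinear maps, so they split by trilinearity. Whenever one argument is a smooth low block ($\eta_R^\leqslant$ or $\sI\eta_R^\leqslant$), they coincide with the classical formula $(f\para g)\res h-f(g\res h)$. Written out, Steps 1--3 are therefore already a complete proof, and the mild formula follows exactly as you say.
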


\begin{proof}
  Split \(\eta=\eta_R^>+\eta_R^\leqslant\).  Since
	\(
	u=F(u)\para\sI\eta_R^>+u_R^\sharp,
	\)
we have
	\begin{align*}
		F(u)\eta_R^>={}&F(u)\para\eta_R^>+F(u)\Par\eta_R^>
		+\cR(u)\res\eta_R^> +(F'(u)\para u_R^\sharp)\res\eta_R^> \notag\\
		&+\operatorname{Com}(F'(u),F(u)\para\sI\eta_R^>,\eta_R^>)
		+F'(u)\operatorname{Com}(F(u),\sI\eta_R^>,\eta_R^>) \\[-0.4ex]
		&+H(u)\Psi_R^{>>} .\notag
	\end{align*}
	Here \(\Psi_R^{>>}\) is  the high--high block with the new split, and the same argument as Lemma~\ref{lem:inherited-second-level} gives
	\[
	\Psi_R^{>>}=\Psi_{R_1}^>+\Theta_{R,R_1}.
	\]
	As
	\(
	F(u)\eta=F(u)\eta_R^\leqslant+F(u)\eta_R^>.
	\)
	Subtracting the  paraproduct \(F(u)\para\eta_R^>\) gives exactly \eqref{eq:single-cutoff-product-identity} with the definition \eqref{eq:single-term-def}.   Finally, the mild formulation of the equation gives
	\[
	u_R^\sharp=P_tu_0+\sI F(u)\eta-F(u)\para\sI\eta_R^>
	=P_tu_0+[\sI,F(u)\para]\eta_R^>+\sI\bigl(F(u)\eta-F(u)\para\eta_R^>\bigr),
	\]
	which is \eqref{eq:single-cutoff-mild} by \eqref{eq:single-cutoff-product-identity}.
\end{proof}

We use the time-dependent exponential weight, as in \cite{HL2d,HL}:
\begin{equation*}
	\mathfrak e_t(x):=\exp\{-(1+t)\langle x\rangle^\varrho\},
	\qquad t\geqslant0,
	\qquad 0<\varrho<1 .
\end{equation*}
Then, for every \(0<a<t\) and \(M\geqslant0\),
\begin{equation}\label{eq:single-exp-absorbs-product-form}
	\|f\|_{C^\beta(\mathfrak e_t)}
	\leqslant C(t-a)^{-M/\varrho}
	\|f\|_{C^\beta(\mathfrak e_a\rho_M)} .
\end{equation}



For \(0<\tau\leqslant T\), write
\begin{equation*}
	\|f\|_{\mathbb S_{\tau,\mathfrak e}^{\beta,\omega;\nu}}
	:=\sup_{0<t\leqslant\tau}t^\omega\|f(t)\|_{C^\beta(\mathfrak e_t)}
	+\sup_{0<s<t\leqslant\tau}s^\omega
	\frac{\|f(t)-f(s)\|_{L^\infty(\mathfrak e_t)}}{|t-s|^\nu}.
\end{equation*}
As in the notation of Section~\ref{sec:target}, we also write
\[
	\|f\|_{C_{t,\mathfrak e}^{\nu,\omega}L^\infty}
	:=\sup_{0<\sigma<\tau\leqslant t}
	\sigma^\omega
	\frac{\|f(\tau)-f(\sigma)\|_{L^\infty(\mathfrak e_\tau)}}{|\tau-\sigma|^\nu}.
\]
The comparison norm is
\begin{equation}\label{eq:single-X-def}
	\begin{aligned}
		X(t):={}&\|\dlt u\|_{\mathbb S_{t,\mathfrak e}^{\alpha,\omega_\alpha;\nu_\alpha}}
		+\sup_{0<a\leqslant t}\|\dlt u(a)\|_{L^\infty(\mathfrak e_a)} \\
		&+\|\dlt u_R^\sharp\|_{\mathbb S_{t,\mathfrak e}^{r_\circ,\gamma;\nu_\alpha}}
		+\sup_{0<a\leqslant t}\|\dlt u_R^\sharp(a)\|_{C^{1-\kappa}(\mathfrak e_a)} .
	\end{aligned}
\end{equation}
Here
\begin{equation*}
r_\circ:=1+\kappa+\frac72\varepsilon,
\qquad
\omega_\alpha:=\gamma\alpha/s.
\end{equation*}
and
\begin{equation*}
	\nu_\alpha:=\frac{\alpha}{2}-\frac{4M_*}{\varrho}>0,
\end{equation*}
where \(M_*>0\) depending on $M_a$ is a  polynomial weight exponent dominating all finite losses
generated in the uniqueness estimates. Moreover, $M_*$ can be chosen arbitrarily small by taking $\mu_{\rm wt}\to0$.

Throughout the rest of this section, \(C_{\rm abs}\) denotes a constant depending only on the fixed parameters, on the enhanced-noise norm, and on the a priori bounds of the two solutions in the spaces of Definition~\ref{def:pc-solution}.  It is independent of the time variable and of the cutoff levels \(R,R_1\), except through explicitly displayed factors such as \(2^{-(1-\kappa-\alpha)R}\).  Constants allowed to depend on the fixed cutoff levels and on the background bounds are denoted by \(C_{R,R_1,\mathcal B}\).

\begin{lemma}\label{lem:single-sharp-composition}
	It holds that
	\begin{equation}\label{eq:single-ansatz-small-lemma}
		\|\dlt F\para\sI\eta_R^>\|_{\mathbb S_{t,\mathfrak e}^{\alpha,\omega_\alpha;\nu_\alpha}}
		+\sup_{0<a\leqslant t}\|\dlt F(a)\para\sI\eta_R^>(a)\|_{L^\infty(\mathfrak e_a)}
		\leqslant C_{\rm abs}2^{-(1-\kappa-\alpha)R}X(t),
	\end{equation}
	and
\begin{equation}\label{eq:single-sharp-composition-time}
		\|\dlt F\|_{C_{t,\mathfrak e}^{\nu_\alpha,\omega_\alpha}L^\infty}
		\leqslant C_{\rm abs}X(t).
	\end{equation}

\end{lemma}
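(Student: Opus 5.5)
The plan is to reduce everything to a pointwise composition bound for $\dlt F:=F(u)-F(v)$, and then transfer that bound to the paraproduct using the decay of $\sI\eta_R^>$ recorded in \eqref{eq:Ieta-high-alpha-decay}. Write
\[
\dlt F=\dlt u\int_0^1F'\bigl(v+\tau\dlt u\bigr)\,\mathrm d\tau=:\dlt u\,\mathcal G .
\]
Since $F\in C_b^2$, we have $\|\mathcal G\|_{L^\infty}\lesssim1$. The a priori bounds of the two solutions in Definition~\ref{def:pc-solution} also give $\mathcal G\in\mathbb S^{\alpha,\omega_\alpha;\alpha/2}_T(\rho_{D_\alpha})$ with bound $C_{\rm abs}$.

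\emph{Step 1: the spatial norm of $\dlt F$.} We have $|\dlt F|\le C|\dlt u|$ pointwise, so $\|\dlt F(a)\|_{L^\infty(\mathfrak e_a)}\le C\|\dlt u(a)\|_{L^\infty(\mathfrak e_a)}\le CX(t)$. For the $C^\alpha(\mathfrak e_t)$ norm I will use the increment characterisation of weighted H\"older norms with $0<\alpha<1$:
\[
\delta_{xy}\dlt F=\mathcal G(x)\,\delta_{xy}\dlt u+\dlt u(y)\,\delta_{xy}\mathcal G .
\]
The first term costs $\|\dlt u\|_{C^\alpha(\mathfrak e_t)}$. The second term costs $\|\dlt u\|_{L^\infty(\mathfrak e_t)}$ times $\|\mathcal G\|_{C^\alpha(\rho_{D_\alpha})}$, so a polynomial factor $\rho_{D_\alpha}^{-1}$ is lost. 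This loss can only be absorbed by comparing $\mathfrak e_t$ with an earlier weight $\mathfrak e_a$ through \eqref{eq:single-exp-absorbs-product-form}, at the price $(t-a)^{-D_\alpha/\varrho}$. For the spatial norm this is done at the fixed time $t$, so I will take $a=t/2$ and use the $\sup_a$ part of $X$. The powers $t^{-D_\alpha/\varrho}$ that appear are harmless, because they only multiply $X$ by a constant depending on $T$ after the time weight $t^{\omega_\alpha}$ is used.

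\emph{Step 2: time increments, \eqref{eq:single-sharp-composition-time}.} The same identity in time gives, for $\sigma<\tau$,
\[
\dlt F(\tau)-\dlt F(\sigma)=\mathcal G(\tau)\bigl(\dlt u(\tau)-\dlt u(\sigma)\bigr)+\dlt u(\sigma)\bigl(\mathcal G(\tau)-\mathcal G(\sigma)\bigr).
\]
The first term is bounded by $\|\dlt u\|_{C^{\nu_\alpha,\omega_\alpha}_{t,\mathfrak e}L^\infty}$. In the second term, $\mathcal G$ has time regularity $\alpha/2$ with weight $\rho_{D_\alpha}$, while $\dlt u(\sigma)$ is controlled in $L^\infty(\mathfrak e_\sigma)$. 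We need $L^\infty(\mathfrak e_\tau)$, and this is where $\nu_\alpha<\alpha/2$ is used. I will apply \eqref{eq:single-exp-absorbs-product-form} with $a=\sigma$ and $M=D_\alpha\le M_*$:
\[
\|\dlt u(\sigma)\,\delta_{\sigma\tau}\mathcal G\|_{L^\infty(\mathfrak e_\tau)}\lesssim(\tau-\sigma)^{-M_*/\varrho}\,\sigma^{-\omega_\alpha}(\tau-\sigma)^{\alpha/2}\,X(t).
\]
Since $\alpha/2-M_*/\varrho\ge\nu_\alpha$ and $|\tau-\sigma|\le T$, this is bounded by $C(\tau-\sigma)^{\nu_\alpha}\sigma^{-\omega_\alpha}X(t)$.

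\emph{Step 3: the paraproduct, \eqref{eq:single-ansatz-small-lemma}.} For the spatial part I will use the weighted paraproduct bound \eqref{eq:para-Linfty-left}, taken with $L^\infty$ on the left factor:
\[
\|\dlt F\para\sI\eta_R^>\|_{C^\alpha(\mathfrak e_t)}\lesssim\|\dlt F\|_{L^\infty(\mathfrak e_t)}\|\sI\eta_R^>\|_{C^\alpha(\rho_{-\sigma_{\eta,\alpha}})}.
\]
The exponential weight is compatible with the paraproduct because it varies on scale $\gg1$ (since $\varrho<1$), and the decaying factor $\rho_{-\sigma_{\eta,\alpha}}$ only helps. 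The $L^\infty(\mathfrak e_a)$ part is identical. For the time increments I split as in the proof of Proposition~\ref{prop:first-layer}:
\[
\delta_{\sigma\tau}\bigl(\dlt F\para\sI\eta_R^>\bigr)=\delta_{\sigma\tau}\dlt F\para\sI\eta_R^>(\tau)+\dlt F(\sigma)\para\delta_{\sigma\tau}\sI\eta_R^> .
\]
The first term is handled by Step 2 together with the $L^\infty$ bound on $\sI\eta_R^>$. The second term uses the $\alpha/2$ time regularity in \eqref{eq:Ieta-high-alpha-decay}, again passing from $\mathfrak e_\sigma$ to $\mathfrak e_\tau$ only trivially, since $\mathfrak e_\tau\le\mathfrak e_\sigma$. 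Every piece carries the factor $2^{-(1-\kappa-\alpha)R}$ from \eqref{eq:Ieta-high-alpha-decay}.

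\emph{Main obstacle.} The delicate point is the polynomial loss $\rho_{D_\alpha}^{-1}$ coming from $\mathcal G$, in both the spatial and the temporal estimates. The exponential weight absorbs this loss only by shifting time, and that produces singular factors $(t-a)^{-M/\varrho}$. I will handle it by consistently choosing $a=\sigma$ in the increment estimates, which is exactly the margin $4M_*/\varrho$ built into $\nu_\alpha$. In the spatial norm I will instead use the dyadic trick $a=t/2$, combined with the time weight, so that no singularity is left over.
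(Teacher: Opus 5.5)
Your argument proves both stated bounds, and most of it coincides with the paper. Step~2 is exactly the paper's proof of \eqref{eq:single-sharp-composition-time}: shift from $\mathfrak e_\sigma\rho_{M_*}$ to $\mathfrak e_\tau$ via \eqref{eq:single-exp-absorbs-product-form}, and pay $(\tau-\sigma)^{-M_*/\varrho}$ out of the margin built into $\nu_\alpha$. In Step~3, the spatial part and the term $\dlt F(\sigma)\para\bigl(\sI\eta_R^>(\tau)-\sI\eta_R^>(\sigma)\bigr)$ are also handled as in the paper.

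The one real difference is the piece $\dlt u(\sigma)\bigl(\mathcal G(\tau)-\mathcal G(\sigma)\bigr)\para\sI\eta_R^>(\tau)$; your $\mathcal G$ is the paper's $\mathcal F(u,v)$. You feed the output of Step~2 into the paraproduct, so this term also pays the exponential time shift. The paper does not shift the weight here. It keeps $\dlt u(\sigma)$ in $L^\infty(\mathfrak e_\tau)$ using only $\mathfrak e_\tau\leqslant\mathfrak e_\sigma$, and measures $\mathcal G(\tau)-\mathcal G(\sigma)$ in $L^\infty(\rho_{M_a})$. The polynomial decay $\rho_{-\sigma_{\eta,\alpha}}$ of $\sI\eta_R^>$, with $\sigma_{\eta,\alpha}\geqslant4M_a$, then absorbs the polynomial loss.

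Your route makes \eqref{eq:single-ansatz-small-lemma} a corollary of \eqref{eq:single-sharp-composition-time} plus a plain paraproduct bound, and it never uses that reserved extra decay. The paper's route spends the reserved decay instead, so the exponent loss $M_*/\varrho$ stays confined to \eqref{eq:single-sharp-composition-time}. Both work. In either version, use the $C^\alpha$ bound of $\sI\eta_R^>$ rather than an $L^\infty$ bound, so that \eqref{eq:para-Linfty-left} actually yields an $L^\infty$ output.

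You should, however, delete the $C^\alpha(\mathfrak e_t)$ estimate of $\dlt F$ in Step~1, together with the corresponding sentence in your \emph{Main obstacle} paragraph. The lemma does not assert it, Step~3 only uses $\|\dlt F\|_{L^\infty(\mathfrak e)}$, and the argument you sketch for it fails. Estimate \eqref{eq:single-exp-absorbs-product-form} only transfers a function already controlled in an earlier weight $\mathfrak e_a\rho_M$ to the later weight $\mathfrak e_t$. At the single time $t$, however, $X$ controls $\dlt u(t)$ only in $L^\infty(\mathfrak e_t)$.

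Since $\mathfrak e_{t/2}\rho_M/\mathfrak e_t\to\infty$ at spatial infinity, the $L^\infty(\mathfrak e_{t/2}\rho_M)$ norm is strictly stronger and is not bounded by $X$. The $\sup_a$ part of $X$ measures each $\dlt u(a)$ in its own weight $\mathfrak e_a$, so choosing $a=t/2$ gains nothing. Without a time gap, the polynomial loss from $\delta_{xy}\mathcal G$ can only be absorbed by an additional decaying factor. That is precisely what $\sI\eta_R^>$ supplies inside the paraproduct, and why the lemma only claims the paraproduct bound and the time-H\"older bound.
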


\begin{proof}
	We first prove \eqref{eq:single-ansatz-small-lemma}.  By \eqref{eq:Ieta-high-alpha-decay}, we obtain
	\begin{equation*}
		\sup_{0<a\leqslant t}a^{\omega_\alpha}\|\dlt F(a)\para\sI\eta_R^>(a)\|_{C^\alpha(\mathfrak e_a)}
		+\sup_{0<a\leqslant t}\|\dlt F(a)\para\sI\eta_R^>(a)\|_{L^\infty(\mathfrak e_a)}
		\leqslant C_{\rm abs}2^{-(1-\kappa-\alpha)R}X(t).
	\end{equation*}
	For \(0<s<t\), write
	\begin{align*}
		&\dlt F(t)\para\sI\eta_R^>(t)-\dlt F(s)\para\sI\eta_R^>(s) \\
		&\quad=\bigl(\dlt F(t)-\dlt F(s)\bigr)\para\sI\eta_R^>(t)
		+\dlt F(s)\para\bigl(\sI\eta_R^>(t)-\sI\eta_R^>(s)\bigr).
	\end{align*}
	The second term is immediate from \eqref{eq:single-X-def}, \eqref{eq:Ieta-high-alpha-decay}, Lemma \ref{lem:para} and the monotonicity \(\mathfrak e_t\leqslant \mathfrak e_s\):
	\[
	s^{\omega_\alpha}
	\frac{\|\dlt F(s)\para(\sI\eta_R^>(t)-\sI\eta_R^>(s))\|_{L^\infty(\mathfrak e_t)}}
	{|t-s|^{\nu_\alpha}}
	\leqslant C_{\rm abs}2^{-(1-\kappa-\alpha)R}X(t).
	\]
	 We write
	\[
	\dlt F=\mathcal F(u,v)\dlt u,
	\qquad
	\mathcal F(x,y):=\int_0^1F'(y+\theta(x-y))\,\mathrm{d}\theta .
	\]
	Then
	\begin{align*}
		\dlt F(t)-\dlt F(s)
		={}&\mathcal F(u(t),v(t))\bigl(\dlt u(t)-\dlt u(s)\bigr) \\
		&+\bigl(\mathcal F(u(t),v(t))-\mathcal F(u(s),v(s))\bigr)\dlt u(s).
	\end{align*}
	The first line is bounded by the \(\nu_\alpha\)-time component of \(X(t)\), the boundedness of \(\mathcal F\) and \eqref{eq:Ieta-high-alpha-decay}.  For the second line, we obtain
	\[
	\|\mathcal F(u(t),v(t))-\mathcal F(u(s),v(s))\|_{L^\infty(\rho_{M_a})}
	\leqslant C_{\rm abs}s^{-\omega_\alpha}|t-s|^{\nu_\alpha}.
	\]
	  Since \(\sigma_{\eta,\alpha}\geqslant4M_a\), the weight of \(\sI\eta_R^>\) compensates for this  polynomial weight.  Thus
	\[
	\|\bigl(\mathcal F(t)-\mathcal F(s)\bigr)\dlt u(s)\para\sI\eta_R^>(t)\|_{L^\infty(\mathfrak e_t)}
	\leqslant C_{\rm abs}2^{-(1-\kappa-\alpha)R}s^{-\omega_\alpha}|t-s|^{\nu_\alpha}X(t).
	\]
 Combining the three estimates proves the time-H\"older part of \eqref{eq:single-ansatz-small-lemma}.

 We now prove \eqref{eq:single-sharp-composition-time}. The argument is the same as above, except for the following point:
	Using \eqref{eq:single-exp-absorbs-product-form} with the time gap \(t-s\), and using
	\(\|\dlt u(s)\|_{L^\infty(\mathfrak e_s)}\leqslant X(t)\), we get
	\[
	\begin{aligned}
		\|\bigl(\mathcal F(u(t),v(t))-\mathcal F(u(s),v(s))\bigr)
		  \dlt u(s)\|_{L^\infty(\mathfrak e_t)}
		&\leqslant C(t-s)^{-M_*/\varrho}
		s^{-\omega_\alpha}|t-s|^{\alpha/2}X(t).
	\end{aligned}
	\]
As \(\alpha/2-\nu_\alpha-M_*/\varrho=3M_*/\varrho>0\), \eqref{eq:single-sharp-composition-time} follows.
\end{proof}

Set
\begin{equation*}
\varepsilon_R:=2^{-(1-\kappa-\alpha)R}
+2^{-(2\alpha-1-\kappa-2\varepsilon)R}
+2^{-(\alpha-2\kappa-3\varepsilon)R_1}.
\end{equation*}

\begin{lemma}\label{lem:single-remainder-diff}
	It holds that
	\begin{equation}\label{eq:single-remainder-diff-output}
		\begin{aligned}
			&\|\dlt u_R^\sharp\|_{\mathbb S_{t,\mathfrak e}^{r_\circ,\gamma;\nu_\alpha}}
			+\sup_{0<a\leqslant t}\|\dlt u_R^\sharp(a)\|_{C^{1-\kappa}(\mathfrak e_a)} +\|\dlt u_R^\sharp\|_{\mathbb S_{t,\mathfrak e}^{\alpha,\omega_\alpha;\nu_\alpha}}\\
			&\qquad\leqslant C_{\rm abs}\varepsilon_R X(t)
			+\sum_m C_{m}\bigl(\mathcal V_{\mu_m,\lambda_m}X\bigr)(t),
		\end{aligned}
	\end{equation}
	where \(\mathcal V_{\mu,\lambda}\) is defined in \eqref{eq:volterra-envelope-def}, and each kernel satisfies \(0\leqslant\mu_m,\lambda_m<1\) and \(\mu_m+\lambda_m<1\).
\end{lemma}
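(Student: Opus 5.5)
We start from the mild identity \eqref{eq:single-cutoff-mild} of Lemma~\ref{lem:single-cutoff-product-identity}, written for both solutions and subtracted:
\[
  \dlt u_R^\sharp=[\sI,\dlt F\para]\eta_R^>+\sI\,\dlt\mathcal N_R .
\]
Here $\dlt\mathcal N_R=\mathcal N_R(u,u_R^\sharp)-\mathcal N_R(v,v_R^\sharp)$, and the initial data cancel. Each term of \eqref{eq:single-term-def} is then expanded by telescoping: every multilinear expression $\Phi(a,b,c)$ is written as $\Phi(\dlt a,b,c)+\Phi(a^v,\dlt b,c)+\dots$. Composition differences are written as $\dlt F=\mathcal F(u,v)\dlt u$, $\dlt F'$ similarly, and $\dlt H$ similarly. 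For $\dlt\cR(u)$ we use the transport-free Taylor form of $\cR$: the kernel representation of $v_i$ in Section~\ref{sec:transport-representation} gives a first-order expansion in $(\dlt u,\dlt u_R^\sharp)$. Its coefficients are controlled by $F''$ and, through $\dlt F''$, by the Lipschitz hypothesis \eqref{eq:F-lip-hyp}. Every difference is controlled by one of the four components of $X$. The background factors are bounded by the a priori bounds of both solutions in polynomially weighted spaces.

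The terms fall into two classes. \emph{Class A} consists of terms carrying a high-block factor: the commutator $[\sI,\dlt F\para]\eta_R^>$, $\dlt F\Par\eta_R^>$, $\dlt\cR\res\eta_R^>$, the two Bony commutators, and $(F'\para\dlt u_R^\sharp)\res\eta_R^>$. These are estimated as in Proposition~\ref{prop:first-layer} and Proposition~\ref{prop:catalogue}, using the bounds \eqref{eq:eta-uniq-split-bounds} and \eqref{eq:Ieta-high-alpha-decay}, Lemma~\ref{lem:parabolic-I-comm}, and the commutator estimate. Each picks up a factor $2^{-(1-\kappa-\alpha)R}$ or $2^{-(2\alpha-1-\kappa-2\varepsilon)R}$. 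For the $\Psi$ term the factor is $2^{-(\alpha-2\kappa-3\varepsilon)R_1}$. The polynomial losses $M_a$ from the background solutions are absorbed by the extra decay $4M_a$ built into $\sigma_{\eta,\alpha}$, as in Lemma~\ref{lem:single-sharp-composition}. Because no exponential gap is needed here, these terms pass through $\sI$ and the Schauder and time-increment bounds of Lemma~\ref{lem:mild-time-increments} pointwise in time, since $\mathfrak e_t$ is decreasing in $t$. They contribute $C_{\rm abs}\varepsilon_RX(t)$. \emph{Class B} consists of terms with no small factor: $\dlt F\,\eta_R^\leqslant$, $\dlt H\,\Theta_{R,R_1}$, $\dlt H\,\Psi_{R_1}^>$ where the resonance is absorbed by regularity, and the $\dlt u$ parts of the $\rho_{M}$-weighted coefficient differences. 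Their size is $C_{R,R_1,\mathcal B}\rho_M^{-1}$ times a component of $X$ at time $a$, with $M$ one of the polynomial growth exponents $d_{\eta,u},d_\Psi,s_0+d_{\eta,u}$ or the background weights.

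For Class B we use the exponential gap. Inside $\sI$ the integrand at time $a$ is measured in $C^{-\beta}(\mathfrak e_a\rho_M)$. Applying \eqref{eq:single-exp-absorbs-product-form} with gap $t-a$ yields $(t-a)^{-M/\varrho}$, and the heat kernel yields $(t-a)^{-\lambda}$ for the regularity gain. The time weight of the $X$-component yields $a^{-\mu}$, with $\mu\in\{0,\omega_\alpha,\gamma\}$. The outcome is $\int_0^t(t-a)^{-\lambda_m}a^{-\mu_m}X(a)\,\mathrm{d}a$, which is the envelope $\mathcal V_{\mu_m,\lambda_m}X$. We must check that $\mu_m+\lambda_m<1$ in each case. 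For the $r_\circ$ norm the worst case is $\lambda=(r_\circ+\beta)/2+M_*/\varrho$ together with $\mu=\gamma$ when the input is $\dlt u_R^\sharp$. Choosing $r_\circ=1+\kappa+\tfrac72\varepsilon$ rather than $r$ leaves a margin of order $\varepsilon$, which beats $M_*/\varrho$ once $\mu_{\rm wt}$ is small. Time increments are handled as in Lemma~\ref{lem:single-sharp-composition}: the exponent $\nu_\alpha=\alpha/2-4M_*/\varrho$ absorbs the gap loss. The $C^{1-\kappa}$ and $C^\alpha$ components follow as in Corollary~\ref{cor:apriori-low-w}.

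The main obstacle is the exponent bookkeeping for Class B in the top norm $\mathbb S^{r_\circ,\gamma;\nu_\alpha}$. There the input $\dlt u$ has only regularity $\alpha$ and time weight $\omega_\alpha$, and products like $\dlt F\,\eta_R^\leqslant$ need to be placed in $C^{\varepsilon}$. We will first try to estimate these products with $\dlt u$ in $L^\infty(\mathfrak e_a)$ only, paired with $\eta_R^\leqslant\in C^\varepsilon$, which gives $\mu=0$. If a resonance forces $C^\alpha$ regularity, we will use $\mu=\omega_\alpha$ with $\lambda\approx(r_\circ)/2$, and check that $\omega_\alpha+(1+\kappa)/2<1$ in the $\varepsilon\to0$ limit. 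This holds since $\gamma\alpha/s\to(1-\kappa)^2/4$, and $(1-\kappa)^2/4+(1+\kappa)/2<1$ for small $\kappa$ in the range $\kappa<\sqrt5-2$.
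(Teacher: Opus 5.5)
Your architecture matches the paper's. You subtract the two identities \eqref{eq:single-cutoff-mild}, take the small factor $2^{-(1-\kappa-\alpha)R}$ for $[\sI,\dlt F\para]\eta_R^>$ from Lemma~\ref{lem:parabolic-I-comm}, and route the rest through the exponential gap \eqref{eq:single-exp-absorbs-product-form} into Volterra kernels.

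The step that fails is your Class~A claim that every term carrying a high block gets an $R$-uniform small factor from \eqref{eq:eta-uniq-split-bounds}. For $(F'(v)\para\dlt u_R^\sharp)\res\eta_R^>$ this is impossible. Already $r_\circ+(\alpha-2)=-\tfrac32\varepsilon<0$, so the resonant product with $\eta_R^>\in C^{\alpha-2}$ is undefined, and the norms $C^{-(2\alpha-2\varepsilon)}$ and $C^{-2+\varepsilon}$ are rougher still. The only admissible norm is $C^{-1-\kappa}(\rho_{s_0})$, which gives no decay in $R$ and costs polynomial growth $s_0$. Intermediate regularities from the last part of Lemma~\ref{lem:bridge-localization} would give a factor no better than $2^{-\frac72\varepsilon R}$, which is not $\lesssim\varepsilon_R$, since $1-\kappa-\alpha=5\varepsilon$.

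The companion term from your telescoping, $(\dlt F'\para u_R^\sharp)\res\eta_R^>$, also carries a high block but cannot go into Class~A either. It does admit $\eta_R^>\in C^{\alpha-2}$, because $r+\alpha-2=2\varepsilon>0$. However, its background factor is not uniform in $R$. Indeed $u_R^\sharp=u^\sharp+F(u)\para\sI\eta_R^\leqslant$, and interpolating the two bounds on $\eta_R^\leqslant$ only gives $\|\sI\eta_R^\leqslant\|_{C^r}\lesssim 2^{(2\kappa+7\varepsilon)R}$, which swamps $2^{-5\varepsilon R}$. The coefficient of $\varepsilon_RX(t)$ must be independent of $R$ for the absorption in Theorem~\ref{thm:direct-uniqueness}, so neither term may be placed there.

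The repair is what the paper does for all of $\sI(\dlt\mathcal N_R)$. Use $\eta_R^>\in C^{-1-\kappa}(\rho_{s_0})$, absorb $\rho_{s_0}$ and the background growth by the gap factor $(t-a)^{-M_*/\varrho}$, and accept a constant $C_{R,R_1,\mathcal B}$, which is permitted in the $C_m$. Since the input time weight $a^{-\gamma}$ equals the output weight, Lemma~\ref{lem:duhamel-volterra-reduction} only needs the following:
\begin{itemize}
\item $r_\circ/2+m_*<1$ for the top norm;
\item $\frac{1-\kappa}{2}+\gamma+m_*<1$ for the $C^{1-\kappa}$ component;
\item $1-\frac\alpha2-\gamma+\omega_\alpha>O(m_*)$ for the $\mathbb S^{\alpha,\omega_\alpha;\nu_\alpha}$ component.
\end{itemize}
All three hold.

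Smallness is needed only where no Volterra integral is available. In $[\sI,\dlt F\para]\eta_R^>$, and in the ansatz term of Theorem~\ref{thm:direct-uniqueness}, $\dlt F$ enters at the terminal time through $\dlt F\para\sI\eta_R^>$ and the time-freezing block, so Lemma~\ref{lem:volterra-gronwall} cannot help there. Every term under $\sI$, by contrast, is handled by Lemma~\ref{lem:volterra-gronwall} with arbitrary constants. Your extra smallness for $\dlt F\Par\eta_R^>$, $\dlt\cR\res\eta_R^>$ and the two Bony commutators is legitimate, since their backgrounds are $R$-uniform, but it is unnecessary. With this reallocation your proposal reduces to the paper's proof.

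One correction on time increments of $\sI h$: the gap loss is absorbed by the kernel exponent $\mu_0+\nu_\alpha$ in Lemma~\ref{lem:duhamel-volterra-reduction}(ii). The margin $\alpha/2-\nu_\alpha$ plays no role there; it is needed only in Lemma~\ref{lem:single-sharp-composition}.
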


\begin{proof}
	Subtract \eqref{eq:single-cutoff-mild} for the two solutions:
	\begin{equation*}
		\dlt u_R^\sharp=[\sI,\dlt F\para]\eta_R^>
		+\sI\bigl(\dlt\mathcal N_R\bigr).
	\end{equation*}

\emph{Estimate of $\sI\bigl(\dlt\mathcal N_R\bigr)$.}
 Using
	\(\eta_R^>\in C^{-1-\kappa}(\rho_{s_0})\), Lemma~\ref{lem:para}, and then \eqref{eq:single-exp-absorbs-product-form},
	\begin{equation*}
		\|\dlt F(a)\Par\eta_R^>(a)\|_{C^{\alpha-1-\kappa}(\mathfrak e_t)}
		\lesssim (t-a)^{-M_*/\varrho}a^{-\omega_\alpha}X(a).
	\end{equation*}
	By \eqref{eq:eta-uniq-split-bounds} and \eqref{eq:single-exp-absorbs-product-form}, we obtain
	\begin{equation*}
		\|\dlt F(a)\eta_R^\leqslant(a)\|_{L^\infty(\mathfrak e_t)}
		\lesssim(t-a)^{-M_*/\varrho}a^{-\omega_\alpha}X(a).
	\end{equation*}
	Using \(\Psi_{R_1}^>\in C^{-2\kappa}\) from \eqref{eq:Psi-rough-bound}, and treating \(\Theta_{R,R_1}\) as a fixed classical factor, gives
	\begin{equation}\label{eq:single-Psi-term}
		\begin{aligned}
			\|\dlt H(a)(\Psi_{R_1}^>+\Theta_{R,R_1})(a)\|_{C^{-2\kappa}(\mathfrak e_t)}
			&\lesssim(t-a)^{-M_*/\varrho}a^{-\omega_\alpha}X(a).
		\end{aligned}
	\end{equation}
	
Applying Lemma~\ref{lem:weighted-paralinearization} with \(G=F\), \(f=u(a)\), and \(g=v(a)\), we obtain
	\[
	\begin{aligned}
		\|\dlt\cR(a)\|_{C^{2\alpha}(\mathfrak e_a\rho_{M_*})}
		&\lesssim a^{-2\omega_\alpha}X(a),
	\end{aligned}
	\]
where we used	\[
		\|\dlt u(a)\|_{C^\alpha(\mathfrak e_a)}\leqslant a^{-\omega_\alpha}X(a),
		\qquad
		\|\dlt u(a)\|_{L^\infty(\mathfrak e_a)}\leqslant X(a).
	\]
	Using
	\(
	\eta_R^>\in C^{-(2\alpha-2\varepsilon)}(\rho_{-\sigma_{\eta,u}^{(2\alpha)}}),
	\)
	 we obtain by  \eqref{eq:single-exp-absorbs-product-form},
	\begin{equation*}
		\|\dlt\cR(a)\res\eta_R^>(a)\|_{C^{2\varepsilon}(\mathfrak e_t)}
		\lesssim (t-a)^{-M_*/\varrho}a^{-2\omega_\alpha}X(a).
	\end{equation*}

  Expand
	\begin{equation*}
		\begin{aligned}
			\dlt\bigl((F'(u)\para u_R^\sharp)\res\eta_R^>\bigr)
			={}&\bigl((F'(u)-F'(v))\para u_R^\sharp\bigr)\res\eta_R^> \\
			&+\bigl(F'(v)\para\dlt u_R^\sharp\bigr)\res\eta_R^> .
		\end{aligned}
	\end{equation*}
For the first term, bound \(\dlt F'\) in \(L^\infty(\mathfrak e_a)\) and  \(u_R^\sharp\in C^r\); this gives a term controlled by \(a^{-\gamma}X(a)\) and has an admissible resonant product with \(\eta_R^>\in C^{-1-\kappa}\), because \(r-1-\kappa=7\varepsilon>0\).  The second term uses \(\dlt u_R^\sharp\in C^{r_\circ}\) and the same rough high-noise norm. This is valid since \(r_\circ-1-\kappa=7\varepsilon/2>0\).    Hence by \eqref{eq:single-exp-absorbs-product-form}, we obtain
	\begin{equation*}
		\|\dlt((F'\para u_R^\sharp)\res\eta_R^>)(a)\|_{L^\infty(\mathfrak e_t)}
		\lesssim(t-a)^{-M_*/\varrho}
		a^{-\gamma}X(a).
	\end{equation*}

 Let \(\mathcal Q_R^{{\rm com},1}\) and \(\mathcal Q_R^{{\rm com},2}\) denote the two commutator terms in \eqref{eq:single-term-def}.   Expanding every solution-dependent factor gives
	\begin{align*}
		\dlt\mathcal Q_R^{{\rm com},1}
		={}&\operatorname{Com}(\dlt F',F(u)\para\sI\eta_R^>,\eta_R^>)
		+\operatorname{Com}(F'(v),\dlt F\para\sI\eta_R^>,\eta_R^>),\\
		\dlt\mathcal Q_R^{{\rm com},2}
		={}&\dlt F'\operatorname{Com}(F(u),\sI\eta_R^>,\eta_R^>)
		+F'(v)\operatorname{Com}(\dlt F,\sI\eta_R^>,\eta_R^>).
	\end{align*}
	The four terms are controlled as follows.
	\begin{itemize}[leftmargin=1.5em]
		\item \(\operatorname{Com}(\dlt F',F(u)\para\sI\eta_R^>,\eta_R^>)\): use \(\dlt F'\in C^\alpha\), \(F(u)\para\sI\eta_R^>\in C^{2-2\alpha+2\varepsilon}\), and \(\eta_R^>\in C^{-1-\kappa}\).
		\item \(\operatorname{Com}(F'(v),\dlt F\para\sI\eta_R^>,\eta_R^>)\): use  \(F'(v)\in C^\alpha\),  \(\dlt F\para\sI\eta_R^>\in C^{2-2\alpha+2\varepsilon}\), and \(\eta_R^>\in C^{-1-\kappa}\).
		\item \(\dlt F'\operatorname{Com}(F(u),\sI\eta_R^>,\eta_R^>)\): first estimate the  commutator in the positive regularity \(C^{r-1-\kappa}\), then multiply by the \(L^\infty\) difference \(\dlt F'\).
		\item \(F'(v)\operatorname{Com}(\dlt F,\sI\eta_R^>,\eta_R^>)\): use \(\dlt F\in C^\alpha\), \(\sI\eta_R^>\in C^{2-2\alpha+2\varepsilon}\), and \(\eta_R^>\in C^{-1-\kappa}\).
	\end{itemize}
	In the commutator estimates the singular pair has negative sum,
	\((2-2\alpha+2\varepsilon)+(-1-\kappa)<0\), while the total regularity is
	\[
	\alpha+(2-2\alpha+2\varepsilon)-1-\kappa=r-1-\kappa>0.
	\]
	Thus by Lemma~\ref{lem:weighted-bony-commutator-catalogue} and \eqref{eq:single-exp-absorbs-product-form}, we obtain
	\begin{equation*}
		\|\dlt\mathcal Q_R^{{\rm com},1}(a)\|_{L^\infty(\mathfrak e_t)}
		+\|\dlt\mathcal Q_R^{{\rm com},2}(a)\|_{L^\infty(\mathfrak e_t)}
		\lesssim(t-a)^{-M_*/\varrho}a^{-\omega_\alpha}X(a).
	\end{equation*}
Combining the above estimates, we may write
\(\dlt\mathcal N_R=h_1+h_2\) so that, for \(0<a<t\leqslant T\),
\begin{equation*}
\begin{aligned}
\|h_1(a)\|_{C^{\alpha-1-\kappa}(\mathfrak e_t)}
&\leqslant C_{R,R_1,\mathcal B}(t-a)^{-M_*/\varrho}
      a^{-\omega_\alpha}X(a),\\
\|h_2(a)\|_{L^\infty(\mathfrak e_t)}
&\leqslant C_{R,R_1,\mathcal B}(t-a)^{-M_*/\varrho}
      a^{-\gamma}X(a).
\end{aligned}
\end{equation*}
Here the \(C^{-2\kappa}\)-term in \eqref{eq:single-Psi-term} is included in
\(h_1\), since \(C^{-2\kappa}\hookrightarrow C^{\alpha-1-\kappa}\).  The
terms with time powers \(a^{-\omega_\alpha}\) and \(a^{-2\omega_\alpha}\) that are
placed in \(h_2\) are absorbed into \(a^{-\gamma}\), because
\(2\omega_\alpha<\gamma\) after decreasing \(\varepsilon\).

We now apply Lemma~\ref{lem:duhamel-volterra-reduction} separately to
\(h_1\) and \(h_2\).  Put
\[
 m_*:=\frac{M_*}{\varrho},\qquad
 \theta_1:=\alpha-1-\kappa,
 \quad \lambda_1:=\omega_\alpha,
 \qquad
 \theta_2:=0,
 \quad \lambda_2:=\gamma .
\]
For \(h_1\),
\((\beta,\Omega,\nu)=(r_\circ,\gamma,\nu_\alpha)\) gives
\[
 \mu_{1,r}:=\frac{r_\circ-\theta_1}{2}+m_*
 =\frac{r_\circ-\alpha+1+\kappa}{2}+m_*,
 \qquad
 \mu_{1,0}:=\frac{-\theta_1}{2}+m_*
 =\frac{1+\kappa-\alpha}{2}+m_* .
\]
The four conditions of Lemma~\ref{lem:duhamel-volterra-reduction} are
\[
 \mu_{1,r}<1,
 \qquad \mu_{1,r}+\omega_\alpha-\gamma<1,
 \qquad \mu_{1,0}+\nu_\alpha<1,
 \qquad \mu_{1,0}+\nu_\alpha+\omega_\alpha-\gamma<1 .
\]
At \(\varepsilon=m_*=0\) these correspond to
\[
 \frac{1-3\kappa}{2}>0,\qquad
 \frac{3-6\kappa-\kappa^2}{4}>0,\qquad
 \frac{1-\kappa}{2}>0,\qquad
 1-\frac{(1+\kappa)^2}{4}>0,
\]
which holds for \(0<\kappa<\sqrt5-2\).  Hence the
conditions hold after decreasing \(\varepsilon\) and the polynomial weight.
For \((\beta,\Omega,\nu)=(\alpha,\omega_\alpha,\nu_\alpha)\),
\[
 \mu_{1,\alpha}:=\frac{\alpha-\theta_1}{2}+m_*
 =\frac{1+\kappa}{2}+m_* ,
 \qquad
 \mu_{1,0}=\frac{1+\kappa-\alpha}{2}+m_* ,
\]
and since \(\lambda_1=\Omega=\omega_\alpha\), the spatial conditions reduce to
\(\mu_{1,\alpha}<1\), while the time-H\"{o}lder conditions are the two already checked
conditions involving \(\mu_{1,0}\).  For
\((\beta,\Omega)=(1-\kappa,0)\), only the spatial part of the lemma is used and
\[
 \mu_{1,1-\kappa}:=\frac{1-\kappa-\theta_1}{2}+m_*
 =1-\frac{\alpha}{2}+m_* .
\]
Thus the required inequalities hold.

For \(h_2\), the parameters are \(\theta_2=0\), \(\lambda_2=\gamma\), and
\(M=M_*\).  For
\((\beta,\Omega,\nu)=(r_\circ,\gamma,\nu_\alpha)\),
\[
 \mu_{2,r}:=\frac{r_\circ}{2}+m_* ,
 \qquad \mu_{2,0}:=m_* .
\]
Since \(\lambda_2=\Omega=\gamma\), the conditions hold.  For
\((\beta,\Omega,\nu)=(\alpha,\omega_\alpha,\nu_\alpha)\),
\[
 \mu_{2,\alpha}:=\frac{\alpha}{2}+m_* ,
 \qquad \mu_{2,0}=m_* .
\]
The only required new condition is
\[
 1-\frac{\alpha}{2}-\gamma+\omega_\alpha-O(m_*)>0,
\]
which follows from the parameter conditions.  For
\((\beta,\Omega)=(1-\kappa,0)\),
\[
 \mu_{2,1-\kappa}:=\frac{1-\kappa}{2}+m_* ,
\]
and the spatial condition is
\[
 \frac{1-\kappa}{2}+\gamma+m_*<1,
\]
which follows from the parameter conditions.

Consequently Lemma~\ref{lem:duhamel-volterra-reduction} gives
\begin{equation}\label{eq:single-INR-volterra-output}
\begin{aligned}
&\|\sI(\dlt\mathcal N_R)\|_{\mathbb S_{t,\mathfrak e}^{r_\circ,\gamma;\nu_\alpha}}
 +\sup_{0<a\leqslant t}\|\sI(\dlt\mathcal N_R)(a)\|_{C^{1-\kappa}(\mathfrak e_a)}
 +\|\sI(\dlt\mathcal N_R)\|_{\mathbb S_{t,\mathfrak e}^{\alpha,\omega_\alpha;\nu_\alpha}}  \\
&\qquad\leqslant
 \sum_m C_{m}\bigl(\mathcal V_{\mu_m,\lambda_m}X\bigr)(t),
\end{aligned}
\end{equation}
where each resulting pair satisfies
\(0\leqslant \mu_m,\lambda_m<1\) and \(\mu_m+\lambda_m<1\).
	
\emph{The parabolic commutator.} Let
\[
\mathcal C_R:=[\sI,\dlt F\para]\eta_R^> .
\]
We apply Lemma~\ref{lem:weighted-time-freezing-commutator} with
\[
f=\dlt F,
\qquad
g=\eta_R^>,
\qquad
\beta=\alpha-2,
\qquad
\nu=\nu_\alpha,
\qquad
\omega=\omega_\alpha.
\]
Recall that by Lemma~\ref{lem:single-sharp-composition}, \eqref{eq:single-sharp-composition-time},
\begin{equation}\label{eq:single-fixed-terminal-DF-bound}
	\begin{aligned}
		&\sup_{0<a\leqslant\tau}a^{\omega_\alpha}
		\|\dlt F(a)\|_{C^\alpha(\mathfrak e_\tau)}  +
			\|\dlt F\|_{C_{t,\mathfrak e}^{\nu_\alpha,\omega_\alpha}}
		\lesssim X(t),
	\end{aligned}
\end{equation}
and by {the first bound in \eqref{eq:eta-uniq-split-bounds}}
\begin{equation}\label{eq:single-comm-model-decay}
	\|\eta_R^>\|_{C^{\alpha-2}(\rho_{-\sigma_{\eta,\alpha}})}
	\lesssim 2^{-(1-\kappa-\alpha)R} .
\end{equation}
 For the high regularity \(q=r_\circ\), set
\[
\theta_h:=r_\circ-(\alpha+\beta)=r_\circ-2\alpha+2,
\qquad \Omega=\gamma .
\]
It follows that \(0\leqslant \theta_h\leqslant 2+2\nu_\alpha-\alpha\).  For \eqref{eq:app-lemma313-style-margins}, we have
\[
\gamma+\frac{2\alpha-r_\circ}{2}-\omega_\alpha\longrightarrow \frac{3-6\kappa-\kappa^2}{4}>0.
\]
 Hence
\[
\tau^\gamma\|\mathcal C_R(\tau)\|_{C^{r_\circ}(\mathfrak e_\tau)}
\leqslant C_{\rm abs}2^{-(1-\kappa-\alpha)R}X(t).
\]
For the low regularity \(q=1-\kappa\), take
\[
\theta_{1-\kappa}:=1-\kappa-(\alpha+\beta)=3-\kappa-2\alpha,
\qquad \Omega=0 .
\]
The two  conditions become
\[
\frac{2-\theta_{1-\kappa}}2-\omega_\alpha>0,
\qquad
\frac{2+2\nu_\alpha-\alpha-\theta_{1-\kappa}}2-\omega_\alpha>0.
\]
Both reduce to the following, up to the already reserved \(M_*/\varrho\) loss:
\[
\frac{2-\theta_{1-\kappa}}2-\omega_\alpha\longrightarrow \frac{(1-\kappa)(1+\kappa)}4>0.
\]
 Hence
\[
\tau^{\omega_\alpha}\|\mathcal C_R(\tau)\|_{C^\alpha(\mathfrak e_\tau)}\leqslant \sup_{0<\tau\leqslant t}\|\mathcal C_R(\tau)\|_{C^{1-\kappa}(\mathfrak e_\tau)}
\leqslant C_{\rm abs}2^{-(1-\kappa-\alpha)R}X(t).
\]

Finally, the \(L^\infty\)-valued time-H\"older part is estimated directly here.
Using \eqref{eq:single-fixed-terminal-DF-bound}, \eqref{eq:single-comm-model-decay}, and \(2\nu_\alpha\leqslant\alpha\) gives
\[
  \|\dlt F\para\eta_R^>\|_{ C^{2\nu_\alpha-2}
       (\mathfrak e\rho_{-\sigma_{\eta,\alpha}})}
  \leqslant C_{\rm abs}2^{-(1-\kappa-\alpha)R}X(t).
\]
{Lemma~\ref{lem:mild-time-increments}\textup{(i)} therefore yields}
\[
  \|\sI(\dlt F\para\eta_R^>)\|_{C_t^{\nu_\alpha,\omega_\alpha}L^\infty
       (\mathfrak e)}
  \leqslant C_{\rm abs}2^{-(1-\kappa-\alpha)R}X(t).
\]
{Lemma~\ref{lem:single-sharp-composition}, specifically \eqref{eq:single-ansatz-small-lemma}, gives the time regularity for the other term.} Altogether,
\begin{equation}\label{eq:single-commutator-appendix-term}
	\begin{aligned}
		&\|\mathcal C_R\|_{\mathbb S_{t,\mathfrak e}^{r_\circ,\gamma;\nu_\alpha}}
		+\sup_{0<a\leqslant t}\|\mathcal C_R(a)\|_{C^{1-\kappa}(\mathfrak e_a)}
		+\|\mathcal C_R\|_{\mathbb S_{t,\mathfrak e}^{\alpha,\omega_\alpha;\nu_\alpha}} \\
		&\qquad\leqslant C_{\rm abs}2^{-(1-\kappa-\alpha)R}X(t).
	\end{aligned}
\end{equation}

Combining
\eqref{eq:single-INR-volterra-output}, with
\eqref{eq:single-commutator-appendix-term}, and using
\(
 2^{-(1-\kappa-\alpha)R}\leqslant \varepsilon_R,
\)
gives exactly \eqref{eq:single-remainder-diff-output}.
\end{proof}

\begin{theorem}\label{thm:direct-uniqueness}
	Assume \eqref{eq:F-lip-hyp}.  Two paracontrolled solutions in the sense of Definition~\ref{def:pc-solution} with the same initial condition coincide on \([0,T]\).
\end{theorem}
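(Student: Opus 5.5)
The argument combines the two difference estimates already proved into a single Volterra-type inequality for the comparison norm \(X(t)\) of \eqref{eq:single-X-def}, and then concludes \(X\equiv0\) by a Gronwall argument for singular kernels. Fix the two solutions \((u,u^\sharp)\) and \((v,v^\sharp)\) with the same initial datum, and pass to the cutoff representatives \(u_R^\sharp=u-F(u)\para\sI\eta_R^>\) and \(v_R^\sharp\). By Lemma~\ref{lem:single-cutoff-product-identity}, both satisfy the mild identity \eqref{eq:single-cutoff-mild} with the same \(P_tu_0\), so this term cancels in the difference. The finiteness of all norms entering \(X(t)\) for each fixed \(R,R_1\) follows from Definition~\ref{def:pc-solution}, Remark~\ref{rem:cutoff-equivalence}, and the fact that \(\mathfrak e_t\) decays faster than any polynomial weight. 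Thus \(X(t)<\infty\) on \([0,T]\).

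First, control the \(\dlt u\)-components of \(X\) through the ansatz:
\[
\dlt u=\dlt F\para\sI\eta_R^>+\dlt u_R^\sharp .
\]
Lemma~\ref{lem:single-sharp-composition}, specifically \eqref{eq:single-ansatz-small-lemma}, bounds the first piece in \(\mathbb S^{\alpha,\omega_\alpha;\nu_\alpha}_{t,\mathfrak e}\) and in \(\sup_a\|\cdot\|_{L^\infty(\mathfrak e_a)}\) by \(C_{\rm abs}2^{-(1-\kappa-\alpha)R}X(t)\). The second piece, including its \(L^\infty(\mathfrak e_a)\) part (which is dominated by the \(C^{1-\kappa}(\mathfrak e_a)\) norm), is handled by Lemma~\ref{lem:single-remainder-diff}. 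Adding these bounds gives
\[
X(t)\leqslant C_{\rm abs}\varepsilon_R X(t)+\sum_m C_m(\mathcal V_{\mu_m,\lambda_m}X)(t),\qquad 0<t\leqslant T.
\]
The constant \(C_{\rm abs}\) is independent of \(R,R_1\), and \(\varepsilon_R\to0\) as \(R,R_1\to\infty\). We therefore fix \(R,R_1\) so large that \(C_{\rm abs}\varepsilon_R\leqslant\frac12\) and absorb this term. After absorption, the constants \(C_m\) may depend on the now-fixed cutoffs, but that dependence is harmless.

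It remains to show that \(X\leqslant 2\sum_m C_m\mathcal V_{\mu_m,\lambda_m}X\) forces \(X\equiv0\), given \(0\leqslant\mu_m,\lambda_m<1\) and \(\mu_m+\lambda_m<1\). We expect \(\mathcal V_{\mu,\lambda}X(t)\) in \eqref{eq:volterra-envelope-def} to be of the form \(\sup_{\tau\leqslant t}\int_0^\tau(\tau-a)^{-\mu}a^{-\lambda}X(a)\,\mathrm da\), possibly with a time prefactor. Since \(X\) is nondecreasing and bounded on \([0,T]\), this gives \(\mathcal V_{\mu,\lambda}X(t)\leqslant C t^{1-\mu-\lambda}X(t)\). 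Hence on a short interval \([0,t_0]\) with \(2\sum_mC_mC t_0^{1-\mu_m-\lambda_m}<1\) we get \(X(t_0)=0\), that is, \(u=v\) and \(u_R^\sharp=v_R^\sharp\) on \([0,t_0]\).

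The main obstacle is iterating beyond \(t_0\). After restarting at \(t_0\), the singularity \(a^{-\lambda}\) is no longer active, and the step length \(t_0\) depends only on the \(C_m\) and the exponents, so finitely many steps cover \([0,T]\). The restart requires care, because the weights \(\mathfrak e_t\) and the time-H\"older seminorms are anchored at time \(0\). The simplest route is to avoid restarting altogether. Knowing \(X=0\) on \([0,t_0]\), the integrals in \(\mathcal V\) only see \(a\geqslant t_0\), where \(a^{-\lambda}\leqslant t_0^{-\lambda}\). The inequality then becomes a standard weakly singular Volterra inequality \(X(t)\leqslant C\int_{t_0}^t(t-a)^{-\mu}X(a)\,\mathrm da\), whose only bounded nonnegative solution is zero, by iterating the kernel (Henry's Gronwall lemma). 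If \(\mathcal V\) instead contains a supremum over earlier times, use monotonicity in the same way. Since the hypothesis that \(F''\) is Lipschitz \eqref{eq:F-lip-hyp} has already entered through Lemma~\ref{lem:weighted-paralinearization} in Lemma~\ref{lem:single-remainder-diff}, this yields \(u=v\) on \([0,T]\). From \(u=v\) we also get \(u^\sharp=v^\sharp\), via \(u^\sharp=u-F(u)\prec\mathcal I\eta\).
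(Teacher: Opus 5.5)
Your proposal is correct and follows essentially the same route as the paper. You combine \eqref{eq:single-ansatz-small-lemma} with Lemma~\ref{lem:single-remainder-diff}, absorb the $C_{\rm abs}\varepsilon_R X(t)$ term by taking $R,R_1$ large, and conclude $X\equiv0$ by the short-time-then-restart Volterra argument, which is exactly the proof of Lemma~\ref{lem:volterra-gronwall}. The only cosmetic difference is that you recover $u^\sharp=v^\sharp$ directly from the original ansatz, whereas the paper goes through $u^\sharp=u_R^\sharp-F(u)\para\sI(\eta_R^\leqslant)$.
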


\begin{proof}
	
	By Lemma~\ref{lem:single-sharp-composition}, more precisely by \eqref{eq:single-ansatz-small-lemma},
	\begin{equation}\label{eq:single-ansatz-small}
		\|\dlt F\para\sI\eta_R^>\|_{\mathbb S_{t,\mathfrak e}^{\alpha,\omega_\alpha;\nu_\alpha}}
		+\sup_{0<a\leqslant t}\|\dlt F(a)\para\sI\eta_R^>(a)\|_{L^\infty(\mathfrak e_a)}
		\leqslant C_{\rm abs}\varepsilon_R X(t).
	\end{equation}
	Combining \eqref{eq:single-ansatz-small} with Lemma~\ref{lem:single-remainder-diff} yields
	\begin{equation*}
		X(t)\leqslant C_{\rm abs}\varepsilon_R X(t)
		+\sum_{m=1}^NC_{m,R,R_1,\mathcal B}\bigl(\mathcal V_{\mu_m,\lambda_m}X\bigr)(t),
	\end{equation*}
	where \(0\leqslant\mu_m,\lambda_m<1\) and \(\mu_m+\lambda_m<1\).  Choose \((R,R_1)\) so large that
\begin{equation*}
	C_{\rm abs}\varepsilon_R\leqslant \frac1{16}.
\end{equation*}  Lemma~\ref{lem:volterra-gronwall} gives \(X\equiv0\).  Hence \(\dlt u=0\) and \(\dlt u_R^\sharp=0\).
	
  Since
	\[
	u^\sharp=u_R^\sharp-F(u)\para\sI(\eta_R^\leqslant),
	\qquad
	v^\sharp=v_R^\sharp-F(v)\para\sI(\eta_R^\leqslant),
	\]
	and \(\dlt u=0\) implies \(\dlt F=0\), we also get
	\[
	\dlt u^\sharp=\dlt u_R^\sharp-\dlt F\para\sI(\eta_R^\leqslant)=0.
	\]
	Thus the two paracontrolled solutions in the sense of Definition~\ref{def:pc-solution} coincide on \([0,T]\).
\end{proof}

\begin{proof}[Proof of Theorem \ref{thm:main}]
The existence proof follows from the uniform bound obtained in
Theorem~\ref{thm:apriori} and the compactness argument of \cite{GH18}. Uniqueness follows from
 Theorem~\ref{thm:direct-uniqueness}.
\end{proof}

\appendix

\section{Paracontrolled calculus and auxiliary lemmas}\label{sec:tools}

\subsection{Characterization of weighted spaces} We fix the annular partition used throughout the paper as follows.  Let
$\chi_0\in C_c^\infty(\mathbb R^2)$ be supported in $\{|x|\leqslant2\}$ and let
$\chi\in C_c^\infty(\mathbb R^2)$ be supported in
$\{2^{-1}\leqslant |x|\leqslant2\}$, with
\[
  \chi_0(x)+\sum_{k\geqslant1}\chi(2^{-k}x)=1 .
\]
Set $\chi_k(x):=\chi(2^{-k}x)$ for $k\geqslant1$.  Choose enlarged cutoffs
$\widetilde\chi_k$ with $\widetilde\chi_k\chi_k=\chi_k$, supported in a fixed
finite enlargement of the annulus of $\chi_k$.  Then
\[
  \sum_{k\geqslant0}\chi_k=1,
  \qquad
  \widetilde\chi_k\chi_k=\chi_k,
\]
the families have finite overlap, and for every multiindex $m$,
\[
  \sup_{k\geqslant0}2^{k|m|}
  \bigl(\|\partial^m\chi_k\|_{L^\infty}
       +\|\partial^m\widetilde\chi_k\|_{L^\infty}\bigr)<\infty .
\]
Moreover, on the support of either cutoff one has $\langle x\rangle\simeq2^k$.

\begin{lemma}\label{lem:weighted-lp-annular}
Let \(q,\beta\in\R\), let \(\rho_q(x)=\langle x\rangle^{-q}\).  Then the dyadic weighted norm used in
\eqref{eq:weighted-holder} is equivalent to multiplication by the weight:
\begin{equation}\label{eq:weight-multiplier-equivalence}
  \norm{f}_{C^\beta(\rho_q)}
  \simeq
  \norm{\rho_q f}_{C^\beta}.
\end{equation}
It is also equivalent to the following norm
\begin{equation}\label{eq:weighted-annular-equivalence}
  \norm{f}_{C^\beta(\rho_q)}
  \simeq
  \sup_{\ell\geqslant0}2^{-q\ell}\norm{\chi_\ell f}_{C^\beta}.
\end{equation}
Moreover, if distributions \(f_k\) satisfy
\begin{equation*}
  \norm{f_k}_{C^\beta}\leqslant A2^{qk},
\end{equation*}
and
\begin{equation*}
  F=\sum_{k\geqslant0}\widetilde\chi_k f_k
\end{equation*}
locally in distributions, then
\begin{equation}\label{eq:annular-summation-conclusion}
  \norm{F}_{C^\beta(\rho_q)}\lesssim A .
\end{equation}
\end{lemma}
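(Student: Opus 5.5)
The plan is to prove the three assertions of Lemma~\ref{lem:weighted-lp-annular} in order. The key tools are that $\rho_q$ is a smooth symbol of order $-q$, and that the Littlewood--Paley kernels $K_j$ decay rapidly, so on scales $2^{-j}\leqslant1$ they move mass only by $O(1)$ distances, at which $\langle x\rangle$ is essentially constant.

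\emph{Step 1: the multiplier equivalence \eqref{eq:weight-multiplier-equivalence}.} I would use Peetre's inequality $\rho_q(x)\rho_q(y)^{-1}\lesssim\langle x-y\rangle^{|q|}$. Write $\rho_q\Delta_j f$ and commute the weight through $\Delta_j$. Then
\[
\rho_q\Delta_j f-\Delta_j(\rho_q f)=\int K_j(x-y)\bigl(\rho_q(x)-\rho_q(y)\bigr)f(y)\,\mathrm{d}y .
\]
Expanding $\rho_q$ around $x$ by Taylor's formula gains a factor $2^{-j}$ per derivative, and each derivative of $\rho_q$ is bounded by $\rho_q$ times $\langle x\rangle^{-1}$. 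This is the standard fact that a symbol of order zero after weighting, namely $\rho_q^{-1}\partial^m\rho_q\in L^\infty$, generates a commutator of lower order. Concretely, I would insert $f=\sum_i\Delta_i f$, treat $|i-j|\leqslant N$ directly using the moment bound $\int|K_j(z)|\langle z\rangle^{|q|}\,\mathrm{d}z\lesssim1$, and handle $|i-j|>N$ by integration by parts in the kernel. The latter is where the smoothness of $\rho_q$ gives decay $2^{-M|i-j|}$. Symmetry in $q\leftrightarrow-q$ gives the reverse inequality.

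\emph{Step 2: the annular equivalence \eqref{eq:weighted-annular-equivalence}.} For the direction ``$\gtrsim$'', write $\chi_\ell f=(\chi_\ell\rho_q^{-1})(\rho_q f)$. Since $\chi_\ell\rho_q^{-1}=2^{q\ell}\phi_\ell$ with $\phi_\ell$ having uniformly bounded $C^m$ norms, multiplication by a smooth bounded function on $C^\beta$ (with $m>|\beta|$) gives $\|\chi_\ell f\|_{C^\beta}\lesssim2^{q\ell}\|\rho_qf\|_{C^\beta}$. For ``$\lesssim$'', write
\[
\rho_q f=\sum_\ell(\rho_q\widetilde\chi_\ell)(\chi_\ell f),\qquad \rho_q\widetilde\chi_\ell=2^{-q\ell}\psi_\ell ,
\]
with $\psi_\ell$ uniformly smooth. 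This reduces the claim to the summation statement of Step 3 with $q=0$.

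\emph{Step 3: the summation bound \eqref{eq:annular-summation-conclusion}.} This is the main point. We have $\rho_q F=\sum_k g_k$ with $g_k:=\rho_q\widetilde\chi_kf_k$ and $\|g_k\|_{C^\beta}\lesssim A$, and $g_k$ is supported in the enlarged annulus $\{\langle x\rangle\simeq2^k\}$. Fix $j$ and $x$, and split $\Delta_j\sum_kg_k(x)$ into two groups. The boundedly many $k$ with $2^k\simeq\langle x\rangle$ contribute $\lesssim2^{-\beta j}A$ directly. For the remaining $k$, the distance from $x$ to $\operatorname{supp}g_k$ is $\gtrsim\max(2^k,\langle x\rangle)$. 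For these I would write $\Delta_jg_k=\Delta_j(\zeta_kg_k)$, where $\zeta_k$ is a cutoff equal to $1$ on the support, and represent
\[
\Delta_jg_k(x)=\sum_{|i-j|\le1}\int K_j(x-y)\zeta_k(y)\,\Delta_i\widetilde g_k(y)\,\mathrm{d}y ,
\]
plus terms where the frequency of $g_k$ is far from $2^j$. Those far-frequency terms are handled via paraproduct-type decay. The main obstacle is making this rigorous for negative $\beta$, where $g_k$ is only a distribution. What I would try first is to expand $g_k=\sum_i\Delta_ig_k$, bound each $|\Delta_ig_k(y)|\leqslant2^{-\beta i}A$ pointwise, and then exploit the fact that $\Delta_ig_k$ is concentrated near the support of $g_k$ up to tails $\lesssim2^{-\beta i}A\,(2^i\operatorname{dist})^{-M}$. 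This comes from the localization $\Delta_ig_k=\Delta_i(\widetilde\zeta_kg_k)$ together with the kernel decay of $\Delta_i$. The off-support contributions then sum to $\lesssim\sum_k\max(2^k,\langle x\rangle)^{-M}2^{-\beta j}A\lesssim2^{-\beta j}A$, using $2^{\pm\beta(i-j)}$ factors summable for $|i-j|\leqslant1$ and Schwartz tails otherwise. Combined with Step 1 this gives $\|F\|_{C^\beta(\rho_q)}\lesssim A$. Summing in $k$ is also what gives meaning to $F$ as a distribution.
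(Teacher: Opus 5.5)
Your route differs from the paper's and is viable, but as written its one nontrivial estimate is not established.

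\textbf{Comparison with the paper.} The paper does no kernel analysis. It cites Triebel for \eqref{eq:weight-multiplier-equivalence} and for the unit-cube localization principle $\|g\|_{C^\beta}\simeq\sup_m\|\varphi_m g\|_{C^\beta}$. It transfers the latter to annuli using bounded overlap and the uniformly smooth multipliers $2^{q\ell}\widetilde\chi_\ell\rho_q$ and $2^{-q\ell}\widetilde\chi_\ell\rho_q^{-1}$. Then \eqref{eq:annular-summation-conclusion} is immediate, because $\chi_\ell F=\sum_{|k-\ell|\leqslant C_{\rm ov}}\chi_\ell\widetilde\chi_k f_k$.

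You run the logic in reverse. You prove the summation bound by hand and deduce the hard direction of \eqref{eq:weighted-annular-equivalence} from it, i.e.\ you re-prove the localization principle for annuli. That makes the argument self-contained. But everything then rests on the off-support bound $|\Delta_j g_k(x)|\lesssim A\,2^{-\beta j}(2^jd)^{-N}$, with $d=\operatorname{dist}(x,\operatorname{supp}g_k)\gtrsim\max(2^k,\langle x\rangle)$.

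\textbf{The gap.} Your justification of that bound is circular. The tail estimate you invoke for $\Delta_i g_k(y)$ away from $\operatorname{supp}g_k$ is the same statement with $i$ in place of $j$. Moreover, ``$\Delta_ig_k=\Delta_i(\widetilde\zeta_kg_k)$ plus kernel decay'' cannot prove it for $\beta<0$. Since $g_k$ is only a distribution, pairing it with $\widetilde\zeta_kK_i(y-\cdot)$ requires control of derivatives of that test function, not just its size. Pointwise bounds $|\Delta_ig_k|\leqslant2^{-\beta i}A$ by themselves carry no support information.

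\textbf{The fix.} Keep the cutoff on the kernel side.
\begin{itemize}
\item Take $\zeta=1$ near $\operatorname{supp}g_k$, $\zeta=0$ on $B(x,d/4)$, with $|\partial^m\zeta|\lesssim d^{-|m|}$, and set $\phi_x:=\zeta\,K_j(x-\cdot)$.
\item Then $\Delta_jg_k(x)=\sum_i\langle\Delta_ig_k,\phi_x\rangle$, and $\|\partial^m\phi_x\|_{L^1}\lesssim2^{|m|j}(2^jd)^{-N}$ because $2^jd\gtrsim1$.
\item For $i\leqslant j+1$, bound each term by $\|\Delta_ig_k\|_{L^\infty}\|\phi_x\|_{L^1}$.
\item For $i>j+1$, write $\Delta_ig_k=(-\Delta)^nh_i$ with $\|h_i\|_{L^\infty}\lesssim2^{-(2n+\beta)i}A$, and integrate by parts onto $\phi_x$. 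This gains $2^{-2n(i-j)}$.
\end{itemize}
Equivalently, use the duality $|\langle g,\phi\rangle|\lesssim\|g\|_{C^\beta}\|\phi\|_{B^{-\beta}_{1,1}}$. With this, your Step~3 closes.

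\textbf{A smaller point in Step~1.} The reverse inequality does not follow by replacing $q$ with $-q$. That substitution yields a bound on $\rho_{-q}\Delta_j(\rho_q f)$ in terms of $\|f\|_{C^\beta}$, not $\|\rho_qf\|_{C^\beta}\lesssim\|f\|_{C^\beta(\rho_q)}$. You need to rerun the commutator argument with $[\Delta_j,\rho_q]f$ controlled by $\sup_i2^{\beta i}\|\rho_q\Delta_if\|_{L^\infty}$. Peetre's inequality together with the frequency localization of $\Delta_if$ makes this work the same way.
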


\begin{proof}
The equivalence \eqref{eq:weight-multiplier-equivalence} follows from \cite[Theorem 6.5]{Tri06}.

\smallskip
\noindent\emph{Step 1: Proof of \eqref{eq:weighted-annular-equivalence}.}
 Let
\((\varphi_m)_{m\in\mathbb Z^2}\subset C_c^\infty(\mathbb R^2)\) be a smooth
partition of unity by translates of one fixed cutoff, with finite overlap.  Then
\[
  \norm{g}_{C^\beta}
  \simeq
  \sup_{m\in\mathbb Z^2}\norm{\varphi_m g}_{C^\beta};
\]
see \cite[Theorem~2.4.7]{Tri92}.
We now pass from this unit-cube localization to the present annular partition.
Since the family \((\chi_\ell)_\ell\) is uniformly bounded as multipliers on
\(C^\beta\),
\[
  \sup_{\ell\geqslant0}\norm{\chi_\ell g}_{C^\beta}\lesssim \norm{g}_{C^\beta}.
\]
Conversely, for each \(m\in\mathbb Z^2\), the set
\[
  I(m):=\{\ell\geqslant0:\operatorname{supp}\varphi_m\cap
  \operatorname{supp}\chi_\ell\neq\varnothing\}
\]
has cardinality bounded uniformly in \(m\).  Therefore,
\[
  \norm{\varphi_m g}_{C^\beta}
  \leqslant
  \sum_{\ell\in I(m)}\norm{\varphi_m\chi_\ell g}_{C^\beta}
  \lesssim
  \sup_{\ell\geqslant0}\norm{\chi_\ell g}_{C^\beta}.
\]
Taking the supremum in \(m\)  yields
\[
  \norm{g}_{C^\beta}
  \simeq
  \sup_{\ell\geqslant0}\norm{\chi_\ell g}_{C^\beta}.
\]
We apply this annular localization estimate to \(g=\rho_q f\).

Let \(M>|\beta|+2\).  On the support of \(\widetilde\chi_\ell\) one has
\(\rho_q(x)=\la x\ra^{-q}\simeq 2^{-q\ell}\).  Define
\[
  a_\ell:=2^{q\ell}\widetilde\chi_\ell\rho_q,
  \qquad
  b_\ell:=2^{-q\ell}\widetilde\chi_\ell\rho_q^{-1}.
\]
The families \((a_\ell)_\ell\) and \((b_\ell)_\ell\) are uniformly bounded in
\(C^M\).  Indeed, writing \(\widetilde\chi_\ell(x)=\widetilde\chi(2^{-\ell}x)\)
on the annular part, with the usual harmless modification for \(\ell=0\),
Leibniz' rule and \(\la x\ra\simeq 2^\ell\) on the support give, for
\(|a|\leqslant M\),
\[
  \norm{\partial^a a_\ell}_{L^\infty}
  +\norm{\partial^a b_\ell}_{L^\infty}
  \lesssim 2^{-|a|\ell}\leqslant C .
\]
Hence multiplication by \(a_\ell\) and by \(b_\ell\) is uniformly bounded on
\(C^\beta\).  Since \(\widetilde\chi_\ell\chi_\ell=\chi_\ell\),
\[
  \chi_\ell\rho_q f
  =2^{-q\ell}a_\ell(\chi_\ell f),
  \qquad
  \chi_\ell f
  =2^{q\ell}b_\ell(\chi_\ell\rho_q f).
\]
Therefore
\[
  \norm{\chi_\ell\rho_q f}_{C^\beta}
  \lesssim
  2^{-q\ell}\norm{\chi_\ell f}_{C^\beta},
  \qquad
  2^{-q\ell}\norm{\chi_\ell f}_{C^\beta}
  \lesssim
  \norm{\chi_\ell\rho_q f}_{C^\beta},
\]
uniformly in \(\ell\).  Combining the preceding localization estimate with these two multiplier estimates and then using
\eqref{eq:weight-multiplier-equivalence} gives
\[
\begin{aligned}
  \norm{f}_{C^\beta(\rho_q)}
  &\simeq \norm{\rho_q f}_{C^\beta}
   \simeq \sup_{\ell\geqslant0}\norm{\chi_\ell\rho_q f}_{C^\beta} \\
  &\simeq \sup_{\ell\geqslant0}2^{-q\ell}\norm{\chi_\ell f}_{C^\beta}.
\end{aligned}
\]
This proves \eqref{eq:weighted-annular-equivalence}.

\smallskip
\noindent\emph{Step 2: Proof of \eqref{eq:annular-summation-conclusion}.}
  By
\eqref{eq:weighted-annular-equivalence}, it suffices to bound
\[
  \sup_\ell2^{-q\ell}\norm{\chi_\ell F}_{C^\beta}.
\]
Since the families \((\chi_\ell)_\ell\) and \((\widetilde\chi_k)_k\) have finite overlap, there exists
\(C_{\rm ov}\geqslant 1\), depending only on the fixed annular partition, such that
\[
    \chi_\ell \widetilde\chi_k \equiv 0
    \qquad\text{whenever } |k-\ell|>C_{\rm ov}.
\]
Hence,
\[
\begin{aligned}
    2^{-q\ell}\|\chi_\ell F\|_{\mathcal C^\beta}
    &\leqslant
    C 2^{-q\ell}
    \sum_{\substack{k\geqslant 0\\ |k-\ell|\leqslant C_{\rm ov}}}
    \|f_k\|_{\mathcal C^\beta}  \\
    &\leqslant
    CA
    \sum_{\substack{k\geqslant 0\\ |k-\ell|\leqslant C_{\rm ov}}}
    2^{q(k-\ell)}
    \leqslant C A .
\end{aligned}
\]
  This proves
\eqref{eq:annular-summation-conclusion}.
\end{proof}

\subsection{Paracontrolled calculus} Let $(\Delta_j)_{j\geqslant -1}$ be the inhomogeneous Littlewood--Paley blocks and
let $S_{j-1}:=\sum_{i<j-1}\Delta_i$.  We use Bony's decomposition
\[
  fg=f\para g+f\res g+f\Par g,
\]
where
\[
  f\para g:=\sum_{j\geqslant -1}S_{j-1}f\,\Delta_jg,
  \qquad
  f\Par g:=g\para f,
  \qquad
  f\res g:=\sum_{|i-j|\leqslant1}\Delta_if\,\Delta_jg .
\]
The estimates below are stated simultaneously for the polynomial weights
$\rho_D(x)=\langle x\rangle^{-D}$ and for the exponential-polynomial weights used
in Section~\ref{sec:difference-details}.  In the latter case
\[
  \mathfrak e_\tau(x):=\exp\{-(1+\tau)\langle x\rangle^\varrho\},
  \qquad 0<\varrho<1,
\]
with \(\tau\) restricted to a fixed bounded interval, as in Section~\ref{sec:difference-details}.  For the exponential weights we used the dyadic cutoffs in the compactly supported Gevrey class of order $1/\zeta$ with $0<\zeta<1$ (see \cite[ Chapter 1]{Rod93} and \cite{MW17}).

\begin{lemma}\label{lem:para}
Let $\rho_A,\rho_B$ be two weights. It holds that
for $\beta\in\mathbb R$,
\begin{equation}\label{eq:para-Linfty-left}
  \norm{f\para g}_{C^\beta(\rho_A\rho_B)}
  \lesssim \norm{f}_{L^\infty(\rho_A)}\norm{g}_{C^\beta(\rho_B)}.
\end{equation}
If $\alpha<0$, then
\begin{equation}\label{eq:para-neg}
  \norm{f\para g}_{C^{\alpha+\beta}(\rho_{A}\rho_B)}
  \lesssim \norm{f}_{C^\alpha(\rho_A)}\norm{g}_{C^\beta(\rho_B)},
\end{equation}
If
$\alpha+\beta>0$, then
\begin{equation}\label{eq:res-est}
  \norm{f\res g}_{C^{\alpha+\beta}(\rho_{A}\rho_{B})}
  \lesssim \norm{f}_{C^\alpha(\rho_A)}\norm{g}_{C^\beta(\rho_B)}.
\end{equation}
\end{lemma}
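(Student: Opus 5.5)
The plan is to reduce all three estimates to their unweighted counterparts by a dyadic/Littlewood--Paley argument in which the weight is frozen at the scale of each block. We use the equivalence \eqref{eq:weight-multiplier-equivalence} of Lemma~\ref{lem:weighted-lp-annular}, or equivalently the standard fact that admissible weights are moderate. For polynomial weights this means $\rho_A(x)\lesssim \rho_A(y)\langle x-y\rangle^{|A|}$. For the exponential weights $\mathfrak e_\tau$ with $0<\varrho<1$ it means $\mathfrak e_\tau(x)\lesssim \mathfrak e_\tau(y)\exp\{C\langle x-y\rangle^\varrho\}$, where $C$ is uniform for $\tau$ in a bounded interval. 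Since the Littlewood--Paley kernels $K_j$, $K_{<j-1}$ have $L^1$-moments against $\langle\cdot\rangle^N$, and in the exponential case Gevrey-type decay $\exp\{-c|2^jx|^\zeta\}$ with $\zeta>\varrho$, we get the uniform estimate
\[
  \norm{\rho\,\Delta_j f}_{L^\infty}+\norm{\rho\, S_{j-1}f}_{L^\infty}\lesssim \norm{\rho f}_{L^\infty}\quad\text{(or the corresponding weighted block norm)}.
\]
The same holds after composing with $\Delta_k$. This is the only place where the specific form of the weight enters, and I expect it to be the main technical point. In the exponential case it is exactly why Gevrey cutoffs are used: ordinary Schwartz kernels only decay faster than any polynomial, which is not enough to absorb $\exp\{C|x|^\varrho\}$.

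For \eqref{eq:para-Linfty-left}, write $f\para g=\sum_j S_{j-1}f\,\Delta_j g$. Each summand has Fourier support in an annulus $2^j\mathcal A$, so $\Delta_k(f\para g)=\sum_{|j-k|\leqslant N_0}\Delta_k(S_{j-1}f\Delta_j g)$. Multiply by $\rho_A\rho_B$, commute the weight through $\Delta_k$ using the moderate-weight kernel bound above, and use the pointwise splitting
\[
  \rho_A\rho_B|S_{j-1}f\,\Delta_jg|\leqslant \norm{\rho_AS_{j-1}f}_{L^\infty}\norm{\rho_B\Delta_jg}_{L^\infty}.
\]
Together with $\norm{\rho_A S_{j-1}f}_{L^\infty}\lesssim\norm{f}_{L^\infty(\rho_A)}$ this gives $2^{\beta k}\norm{\rho_A\rho_B\Delta_k(f\para g)}_{L^\infty}\lesssim \norm{f}_{L^\infty(\rho_A)}\norm{g}_{C^\beta(\rho_B)}$.

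For \eqref{eq:para-neg} the argument is identical except that we bound $\norm{\rho_A S_{j-1}f}_{L^\infty}\leqslant\sum_{i<j-1}\norm{\rho_A\Delta_i f}_{L^\infty}\lesssim 2^{-\alpha j}\norm{f}_{C^\alpha(\rho_A)}$. The geometric sum converges to its largest term precisely because $\alpha<0$.

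For \eqref{eq:res-est}, note that $\Delta_i f\,\Delta_{i'}g$ with $|i-i'|\leqslant1$ has Fourier support in a ball of radius $\sim2^i$. Hence $\Delta_k(f\res g)=\sum_{i\gtrsim k-N_0}\sum_{|i-i'|\leqslant1}\Delta_k(\Delta_if\Delta_{i'}g)$. Weighting as before gives the bound $\sum_{i\gtrsim k}2^{-(\alpha+\beta)i}\norm{f}_{C^\alpha(\rho_A)}\norm{g}_{C^\beta(\rho_B)}\lesssim 2^{-(\alpha+\beta)k}\norm{f}_{C^\alpha(\rho_A)}\norm{g}_{C^\beta(\rho_B)}$, and the sum converges precisely because $\alpha+\beta>0$. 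All the implicit constants depend only on the finitely many kernel moments, so they are uniform in the weight parameters within the bounded ranges used in the paper.
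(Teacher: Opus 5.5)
Your proof is correct and is essentially the same approach as the paper: the paper's proof only cites the weighted paraproduct lemmas of the references, and those rest on this standard argument. You freeze the weight through the kernel moment bound for moderate weights, using Gevrey-class blocks to absorb $\exp\{C|x-y|^\varrho\}$ in the exponential case as in the proof of Lemma~\ref{lem:weighted-time-freezing-commutator}. You then run the unweighted block estimates, where $\alpha<0$ and $\alpha+\beta>0$ give convergence of the respective geometric sums.
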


\begin{proof}
See \cite[Lemmas~2.10--2.11]{ZZZ22} and
\cite[Lemmas~2.14--2.16]{GH18}.
\end{proof}

\begin{lemma}\label{lem:weighted-bony-commutator-catalogue}
Let
\[
  \mathrm{Com}(f,g,h):=(f\para g)\res h-f\,(g\res h).
\]
Assume that $0<\alpha<1$, $\beta+\gamma<0$, and
$\alpha+\beta+\gamma>0$.  Then, for  weights $\rho_A,\rho_B,\rho_C$
\begin{equation}\label{eq:weighted-bony-commutator-catalogue}
  \norm{\mathrm{Com}(f,g,h)}_{C^{\alpha+\beta+\gamma}(\rho_{A}\rho_B\rho_{C})}
  \lesssim
  \norm{f}_{C^\alpha(\rho_A)}
  \norm{g}_{C^\beta(\rho_B)}
  \norm{h}_{C^\gamma(\rho_C)} .
\end{equation}
\end{lemma}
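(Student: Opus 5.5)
The plan is to reduce \eqref{eq:weighted-bony-commutator-catalogue} to the unweighted commutator lemma of Gubinelli--Imkeller--Perkowski by localizing the weights. First I treat the polynomial weights \(\rho_D\). I would not multiply the three factors by weights directly. Instead I would write everything blockwise and use the locality of the Littlewood--Paley kernels. Expanding
\[
  \mathrm{Com}(f,g,h)=\sum_{j}\sum_{|i-k|\leqslant1}\Bigl(\Delta_i\bigl(S_{j-1}f\,\Delta_jg\bigr)-S_{j-1}f\,\Delta_i\Delta_jg\Bigr)\Delta_kh
  +\text{(terms with } f \text{ not in low-frequency position)},
\]
the main piece is a sum over \(j\sim i\sim k\) of genuine commutators \([\Delta_i,S_{j-1}f]\Delta_jg\). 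The remaining pieces come from \(f\succeq g\) frequency interactions, and these are estimated directly as products with total regularity \(\alpha+\beta+\gamma>0\).

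For the main piece I write the commutator in kernel form:
\[
  [\Delta_i,S_{j-1}f]\Delta_jg(x)=\int K_i(x-y)\bigl(S_{j-1}f(y)-S_{j-1}f(x)\bigr)\Delta_jg(y)\,\mathrm{d}y .
\]
Since \(0<\alpha<1\), the weighted H\"older bound gives
\[
  |S_{j-1}f(y)-S_{j-1}f(x)|\lesssim \norm{f}_{C^\alpha(\rho_A)}\rho_A(x)^{-1}\la x-y\ra^{N}|x-y|^\alpha .
\]
This is the same device used in the proofs of Lemma~\ref{lem:G-bounds} and Proposition~\ref{prop:transport-error}. For \(\Delta_jg(y)\) I use the pointwise bound with \(\rho_B(y)^{-1}\lesssim\rho_B(x)^{-1}\la x-y\ra^{N}\). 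The factor \(\la x-y\ra^{2N}|x-y|^\alpha\) is absorbed by the Schwartz decay of \(K_i\), which gives \(2^{-i\alpha}\). Multiplying by \(\Delta_kh(x)\) then yields, at each point \(x\),
\[
  \rho_A\rho_B\rho_C(x)\,|\text{block}_{i,j,k}(x)|\lesssim 2^{-i(\alpha+\beta+\gamma)}\norm{f}\norm{g}\norm{h}.
\]
Summing over \(i\sim j\sim k\geqslant q\) converges because \(\alpha+\beta+\gamma>0\), which controls \(\Delta_q\) of the sum. Here I use that the \(i\)-block has Fourier support in a ball of radius \(\sim2^i\), and that \(\Delta_q\) commutes with weights up to a constant, by the same kernel-tail argument or by Lemma~\ref{lem:weighted-lp-annular}.

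The exponential weights \(\mathfrak e_\tau\) follow the same scheme. The only change is the weight comparison \(\mathfrak e_\tau(y)^{-1}\lesssim\mathfrak e_\tau(x)^{-1}e^{C|x-y|^\varrho}\). This factor is subexponential, so it is absorbed by kernels whose decay is faster than any \(e^{C|z|^\varrho}\). That decay is exactly why the Gevrey-class dyadic cutoffs are used: their kernels decay like \(e^{-c|z|^{\zeta}}\), and I take \(\varrho<\zeta\).

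The step I expect to be the main obstacle is bookkeeping the non-diagonal terms, namely \((f\para g)\res h\) versus \(f(g\res h)\), where the frequencies of \(f\) are comparable to or larger than those of \(g\). There the cancellation structure must be exhibited exactly as in \cite[Lemma~2.4]{GIP15}. I would first try to cite the weighted version \cite[Lemma~2.16]{GH18} or \cite{MW17} for these, after checking that their weight hypotheses cover both \(\rho_D\) and \(\mathfrak e_\tau\). If the citation does not cover them, I would redo the GIP decomposition with the pointwise weight transfer above, term by term.
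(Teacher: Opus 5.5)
The paper does not prove this lemma itself; its proof is a citation of \cite[Lemma~2.11]{ZZZ22} and \cite[Lemma~2.16]{GH18}. Your route is a direct weighted re-proof, and for the genuine commutator blocks $[\Delta_i,S_{j-1}f]\Delta_jg\,\Delta_kh$ it is sound. The ingredients are:
\begin{itemize}
\item the kernel form of the commutator;
\item the weighted increment bound for $S_{j-1}f$, which is where $0<\alpha<1$ enters;
\item the transfer $\rho_B(y)^{-1}\lesssim\rho_B(x)^{-1}\la x-y\ra^N$, absorbed by the kernel moments;
\item the Fourier support of each block in a ball of radius $\lesssim2^i$.
\end{itemize}
Together these give $2^{-(\alpha+\beta+\gamma)q}$ after summing over $i\gtrsim q$, using $\alpha+\beta+\gamma>0$. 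This is the same weight-in-the-kernel device the paper uses in Lemma~\ref{lem:G-bounds} and Lemma~\ref{lem:weighted-time-freezing-commutator}. Compared with the bare citation, your route is longer but treats the subexponential weights $\mathfrak e_\tau$ explicitly, via the Gevrey kernel decay $e^{-c|z|^{\zeta}}$ with $\zeta>\varrho$. That is how the lemma is actually applied in Section~\ref{sec:difference-details}, and the cited references do not obviously cover it.

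Your plan goes wrong on the remaining terms. You call them the main obstacle, say they require the cancellation structure of \cite{GIP15}, and propose to estimate them ``as products with total regularity $\alpha+\beta+\gamma>0$''. All three points are misattributions. The exact remainder in your splitting is
\[
  -\sum_{m\geqslant-1}\Delta_mf\sum_{j\leqslant m+1}\sum_{|i-k|\leqslant1}\Delta_i\Delta_jg\,\Delta_kh .
\]
It needs neither cancellation nor any kernel transfer of weights, since the three weights simply multiply pointwise. What it does need is the hypothesis $\beta+\gamma<0$, which your proposal never invokes.

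Here is the estimate:
\begin{itemize}
\item Since $\beta+\gamma<0$, one has $\sum_{j\leqslant m+1}2^{-(\beta+\gamma)j}\lesssim2^{-(\beta+\gamma)m}$.
\item Hence the $m$-th term is bounded by $2^{-(\alpha+\beta+\gamma)m}$ in $L^\infty(\rho_A\rho_B\rho_C)$, with Fourier support in a ball of radius $\lesssim2^m$.
\item Then $\alpha+\beta+\gamma>0$ sums the series over $m\gtrsim q$.
\end{itemize}
If you instead read this piece as a genuine product of $f$ with a distribution of regularity $\beta+\gamma$, it would only have regularity $\beta+\gamma$. The extra $\alpha$ comes from $f$ carrying the top frequency. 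If $\beta+\gamma>0$, the same computation would give only regularity $\alpha$, which is why the hypothesis is there.

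With this two-line estimate your argument closes without the fallback citation. The same decomposition also serves as the definition of $\mathrm{Com}(f,g,h)$ when $g\res h$ itself is not defined.
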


\begin{proof}
See \cite[Lemma~2.11]{ZZZ22} and
\cite[Lemma~2.16]{GH18}.
\end{proof}

\begin{lemma}\label{lem:parabolic-I-comm}\label{lem:weighted-time-freezing-commutator}
Let $0<\alpha<1$, $\beta\in\mathbb R$, $0<\nu\leqslant\alpha/2$, and
$0\leqslant\omega<1$.  Let $w_1,w_2$ be either polynomial weights or the
exponential-polynomial product weights.  For
\[
  [\sI,f\para]g(t):=\sI(f\para g)
  -f\para\sI(g),
\]
assume
\[
  0\leqslant\theta\leqslant2+2\nu-\alpha .
\]
%
it holds that for $\Omega\geqslant0$
\begin{equation}\label{eq:app-lemma313-style-comm-Omega}
  \sup_{0<t\leqslant T}t^\Omega
  \|[\sI,f\para]g(t)\|_{C^{\alpha+\beta+\theta}(w_1w_2)}
  \leqslant C_T
  \|f\|_{\mathbb S_T^{\alpha,\omega;\nu}(w_1)}
  \|g\|_{L_T^\infty C^\beta(w_2)},
\end{equation}
provided
\begin{equation}\label{eq:app-lemma313-style-margins}
  \Omega+\frac{2-\theta}{2}-\omega\geqslant0,
  \qquad
  \Omega+\frac{2+2\nu-\alpha-\theta}{2}-\omega\geqslant0 .
\end{equation}
In particular, if $w_1=\rho_A$, $w_2=\rho_B$, $\beta<0$,
$\zeta:=2+\alpha+\beta\in(0,2)$, and one takes $\nu=\alpha/2$,
$\theta=2$, $\Omega=\omega$, then
\begin{equation*}
  \norm{[\sI,f\para]g}_{C_T^{\zeta,\omega}(\rho_{A+B})}
  \lesssim
  \norm{f}_{\mathbb S_T^{\alpha,\omega;\alpha/2}(\rho_A)}
  \norm{g}_{L_T^\infty C^\beta(\rho_B)} .
\end{equation*}
\end{lemma}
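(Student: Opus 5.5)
We prove Lemma~\ref{lem:parabolic-I-comm} by writing the commutator as a time integral of frozen-coefficient differences, and estimating each Littlewood--Paley block using heat-kernel smoothing and the time regularity of $f$. For fixed $t$,
\[
  [\sI,f\para]g(t)=\int_0^t\Bigl(P_{t-r}\bigl(f(r)\para g(r)\bigr)-f(t)\para P_{t-r}g(r)\Bigr)\,\mathrm{d}r ,
\]
and we split the integrand as
\[
  \Bigl(P_{t-r}\bigl(f(r)\para g(r)\bigr)-f(r)\para P_{t-r}g(r)\Bigr)
  +\bigl(f(r)-f(t)\bigr)\para P_{t-r}g(r)=:I_1(r)+I_2(r).
\]

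\emph{Step 1: the spatial commutator $I_1$.} This is the standard heat-semigroup commutator $[P_{t-r},f\para]$. The weighted analogue of \cite[Lemma~2.8]{ZZZ22} and \cite{GH18} gives, for $0\leqslant\theta'\leqslant2$,
\[
  \|I_1(r)\|_{C^{\alpha+\beta+\theta'}(w_1w_2)}\lesssim (t-r)^{-\theta'/2}\|f(r)\|_{C^\alpha(w_1)}\|g(r)\|_{C^\beta(w_2)} .
\]
The weights pass through the heat kernel at the cost of a constant. For polynomial weights this follows from $\rho(x)^{-1}\lesssim\rho(y)^{-1}\langle x-y\rangle^{N}$. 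For the exponential weights it follows from the subadditivity of $\langle x\rangle^\varrho$ together with Gevrey-class cutoffs.

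We cannot take $\theta'=\theta$ when $\theta\geqslant 2$, since the resulting kernel is not integrable. Instead we estimate the block $\Delta_j I_1$ with a $j$-dependent choice. For $t-r\geqslant 2^{-2j}$ we use the Gaussian factor $e^{-c(t-r)2^{2j}}$. For $t-r<2^{-2j}$ we use the $C^\alpha$-gain of order $(t-r)^{\alpha/2}$ from the freezing, which is at most $\theta'$ derivatives. Summing the pieces, the $r$-integral of $\Delta_jI_1$ gains $2^{-j\theta}$ provided $\theta<2+\alpha$. Combined with $r^{-\omega}$, this gives the factor $t^{(2-\theta)/2-\omega}$. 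This is the first condition in \eqref{eq:app-lemma313-style-margins} after multiplying by $t^\Omega$.

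\emph{Step 2: the time-increment term $I_2$.} Here we use the time H\"older seminorm of $\mathbb S_T^{\alpha,\omega;\nu}(w_1)$, which gives
\[
  \|f(r)-f(t)\|_{L^\infty(w_1)}\leqslant r^{-\omega}(t-r)^\nu\|f\|_{\mathbb S}.
\]
We combine this with \eqref{eq:para-Linfty-left} and the smoothing estimate $\|P_{t-r}g\|_{C^{\beta+\alpha+\theta}}\lesssim (t-r)^{-(\alpha+\theta)/2}\|g\|_{C^\beta}$. This yields the integrand $r^{-\omega}(t-r)^{\nu-(\alpha+\theta)/2}$.

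This kernel is integrable only if $\theta<2+2\nu-\alpha$. At the endpoint we again argue blockwise in $j$, with a Gaussian cutoff for $t-r\gg2^{-2j}$, which produces a $\sup_j$-bound rather than an integral bound. The Beta-integral then gives $t^{(2+2\nu-\alpha-\theta)/2-\omega}$, which is the second condition in \eqref{eq:app-lemma313-style-margins}. For the weight, the step from $w_1(t)$ to $w_1(r)$ in the exponential case uses the monotonicity $\mathfrak e_t\leqslant\mathfrak e_r$. The multiplication of the two weights is handled as in Lemma~\ref{lem:para}.

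\emph{Step 3: the particular case.} We set $\nu=\alpha/2$, $\theta=2$ and $\Omega=\omega$. Both margins in \eqref{eq:app-lemma313-style-margins} are then $0$, so they hold. This gives the $C^{\zeta}$ bound with $\zeta=2+\alpha+\beta$, and $w_1w_2=\rho_{A+B}$.

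\textbf{Main obstacle.} The hard part is the endpoint $\theta=2$ (respectively $\theta=2+2\nu-\alpha$), where the time integrals diverge logarithmically. I would handle it by working block by block: write $\Delta_j$ of each term, split the $r$-integral at $t-r=2^{-2j}$, and use the exponential decay of $\Delta_jP_{t-r}$ for $t-r\geqslant 2^{-2j}$. Since only the $\sup_j$ (Besov--H\"older) norm is required, no summation over $j$ is needed, and this keeps the exponents sharp.
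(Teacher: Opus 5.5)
Your proposal is correct and follows essentially the paper's route. You use the same split into the spatial commutator $P_{t-a}(f(a)\para g(a))-f(a)\para P_{t-a}g(a)$ and the time-freezing term $(f(a)-f(t))\para P_{t-a}g(a)$, the blockwise heat-kernel decay $1\wedge((t-a)2^{2j})^{-N}$ with weighted (Gevrey) kernel moments, and the time-H\"older part of $\mathbb S_T^{\alpha,\omega;\nu}$; together these produce exactly the two margin conditions.

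One slip: the spatial block gains at most $2^{-2j}$ beyond $2^{-(\alpha+\beta)j}$, because for $t-a\gtrsim 2^{-2j}$ the integrated commutator behaves like $[\Delta^{-1},f\para]$. So your stated range $\theta<2+\alpha$ is an overclaim. It is harmless here, since the hypothesis forces $\theta\leqslant 2+2\nu-\alpha\leqslant 2$.
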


\begin{proof}
The proof is the heat analogue of the commutator argument in
\cite[Lemma~3.13]{HZZZ24}.  We give the details, especially for the
exponential weights.  We have
\[
\begin{aligned}
[\sI,f\para]g(t)
={}&\sum_j\int_0^t\Bigl(P_{t-a}(S_{j-1}f(a)\Delta_jg(a))
       -S_{j-1}f(a)P_{t-a}\Delta_jg(a)\Bigr)\,\mathrm{d}a \\
&+\sum_j\int_0^t\Bigl((S_{j-1}f(a)-S_{j-1}f(t))P_{t-a}\Delta_jg(a)\Bigr)\,\mathrm{d}a \\
=:& \sum_j(I_{1,j}(t)+I_{2,j}(t)).
\end{aligned}
\]

\smallskip
\noindent\emph{The spatial commutator block.}
We first prove the dyadic estimate, valid for a large fixed $N$,
\[
\begin{aligned}
&\Bigl\|P_{t-a}(S_{j-1}f(a)\Delta_jg(a))
       -S_{j-1}f(a)P_{t-a}\Delta_jg(a)\Bigr\|_{L^\infty(w_1w_2)} \\
&\qquad\lesssim
  2^{-\alpha j}\bigl(1\wedge((t-a)2^{2j})^{-N}\bigr)
  \|f(a)\|_{C^\alpha(w_1)}\,2^{-\beta j}
  \|g\|_{L_T^\infty C^\beta(w_2)} .
\end{aligned}
\]
The unweighted cancellation is the standard one; we spell out the weighted kernel estimate.  Put $W=w_1w_2$.  We write
\[
  K_{j,s}(z)
  =\mathcal F^{-1}\!\left(\varphi_j(\xi)\widetilde\varphi_j(\xi)e^{-s|\xi|^2}\right)(z),
  \qquad s=t-a>0,
\]
where $\varphi_j$ is the multiplier of $\Delta_j$ and
$\widetilde\varphi_j$ is equal to one on the support of $\varphi_j$.
In the spatial commutator block the cancellation
\[
  S_{j-1}f(x-z)-S_{j-1}f(x)
\]
contributes a factor $|z|^\alpha\|f(a)\|_{C^\alpha(w_1)}$ before the kernel is
integrated.

For a fixed admissible weight $W$, insert the weight ratio:
\[
  W(x)|K_{j,s}*h(x)|
  \leqslant
  \int |K_{j,s}(z)|\frac{W(x)}{W(x-z)}\,\mathrm{d}z\,
       \|h\|_{L^\infty(W)} .
\]
For the exponential-polynomial weights we have
\[
  \frac{W(x)}{W(x-z)}
  \lesssim
  \langle z\rangle^{m_W}\exp\{c_W|z|^\varrho\},
\]
where  $m_W,c_W$ depend only on the  weights; for polynomial
weights one simply has $c_W=0$.  Hence the weighted block bound reduces to
moments
\[
  \mathcal M_{j,s}^{(\delta)}(W)
  :=
  \sup_x\int |K_{j,s}(z)|\,|z|^\delta
       \frac{W(x)}{W(x-z)}\,\mathrm{d}z,
  \qquad \delta\geqslant0 .
\]

Choose $\zeta_{\rm G}\in(\varrho,1)$ and take the dyadic cutoffs in a compactly
supported Gevrey class of order $1/\zeta_{\rm G}$.    By the standard
Gevrey--Paley--Wiener estimate, the inverse Fourier transform of a compactly
supported Gevrey-$1/\zeta_{\rm G}$ multiplier decays like
$\exp\{-c|x|^{\zeta_{\rm G}}\}$; see, for instance,
\cite[Chapter~1]{Rod93}.  Let $j\geqslant0$, set $\eta=s2^{2j}$, and write
$y=2^jz$.  Then
\[
  K_{j,s}(z)=2^{2j}k_\eta(2^jz),
  \qquad
  k_\eta=\mathcal F^{-1}\!\left(\varphi(\xi)\widetilde\varphi(\xi)e^{-\eta|\xi|^2}\right).
\]
The multiplier $\varphi\widetilde\varphi$ is supported in a fixed annulus.  For
every $L\geqslant0$, the family
\[
  (1+\eta)^L\varphi(\xi)\widetilde\varphi(\xi)e^{-\eta|\xi|^2},
  \qquad \eta\geqslant0,
\]
is bounded in the same compactly supported Gevrey class.  Consequently
\[
  |k_\eta(y)|\leqslant C_L(1+\eta)^{-L}e^{-c|y|^{\zeta_{\rm G}}},
\]
and hence
\[
  |K_{j,s}(z)|
  \leqslant C_L2^{2j}(1+s2^{2j})^{-L}
      \exp\{-c(2^j|z|)^{\zeta_{\rm G}}\} .
\]
  Since $\zeta_{\rm G}>\varrho$, after the change of variables
$y=2^jz$, for $0\leqslant\delta\leqslant\alpha$,
\[
\begin{aligned}
  \mathcal M_{j,s}^{(\delta)}(W)
  &\lesssim
  2^{-\delta j}(1+s2^{2j})^{-L}
  \int_{\mathbb R^2}|y|^\delta
       \langle 2^{-j}y\rangle^{m_W}
       \exp\{-c|y|^{\zeta_{\rm G}}+c_W2^{-j\varrho}|y|^\varrho\}\,\mathrm{d}y  \\
  &\lesssim
  2^{-\delta j}(1+s2^{2j})^{-L} .
\end{aligned}
\]

Taking $L$ large enough gives, in particular,
\[
  \mathcal M_{j,s}^{(\alpha)}(W)
  \lesssim
  2^{-\alpha j}\bigl(1\wedge(s2^{2j})^{-N}\bigr).
\]
Together with
$\|\Delta_jg(a)\|_{L^\infty(w_2)}\lesssim2^{-\beta j}
\|g\|_{L_T^\infty C^\beta(w_2)}$, this proves the dyadic estimate above.  Integrating in $a$, we get
\[
  \|I_{1,j}(t)\|_{L^\infty(w_1w_2)}
  \lesssim
  2^{-(\alpha+\beta)j}
  \int_0^t\bigl(1\wedge((t-a)2^{2j})^{-N}\bigr)a^{-\omega}\,\mathrm{d}a\,
  \|f\|_{C_T^{\alpha,\omega}(w_1)}
  \|g\|_{L_T^\infty C^\beta(w_2)} .
\]
For every $0\leqslant\theta\leqslant2$,
\begin{equation*}
  t^\Omega
  \int_0^t\bigl(1\wedge((t-a)2^{2j})^{-N}\bigr)a^{-\omega}\,\mathrm{d}a
  \lesssim_T
  2^{-\theta j}
\end{equation*}
whenever $\Omega+(2-\theta)/2-\omega\geqslant0$.  Indeed, the elementary bound
\[
  \int_0^t\bigl(1\wedge((t-a)2^{2j})^{-N}\bigr)a^{-\omega}\,\mathrm{d}a
  \lesssim_T 2^{-\theta j}t^{1-\omega-\theta/2}
\]
follows by splitting at $a=t/2$: on $[0,t/2]$ one has $t-a\simeq t$, while on
$[t/2,t]$ one has $a\simeq t$ and uses
\[
  \int_0^{t/2}\bigl(1\wedge(b2^{2j})^{-N}\bigr)\,\mathrm{d}b
  \lesssim 2^{-\theta j}t^{1-\theta/2}.
\]
After multiplication by $t^\Omega$, this is bounded on $(0,T]$.  Hence the spatial commutator block contributes to
\eqref{eq:app-lemma313-style-comm-Omega}.

\smallskip
\noindent\emph{The time-freezing block.}
For $I_{2,j}$, the same argument
gives
\[
  \|I_{2,j}(t)\|_{L^\infty(w_1w_2)}
  \lesssim
  2^{-\beta j}\|g\|_{L_T^\infty C^\beta(w_2)}
  \int_0^t\bigl(1\wedge((t-a)2^{2j})^{-N}\bigr)
  \|f(a)-f(t)\|_{L^\infty(w_1)}\,\mathrm{d}a .
\]
Using the time-increment component of
$\mathbb S_T^{\alpha,\omega;\nu}(w_1)$,
\[
  \|f(a)-f(t)\|_{L^\infty(w_1)}
  \leqslant a^{-\omega}|t-a|^\nu
  \|f\|_{\mathbb S_T^{\alpha,\omega;\nu}(w_1)} .
\]
The required elementary heat-convolution bound is
\begin{equation*}
  t^\Omega
  \int_0^t\bigl(1\wedge((t-a)2^{2j})^{-N}\bigr)
     |t-a|^\nu a^{-\omega}\,\mathrm{d}a
  \lesssim_T 2^{-(\alpha+\theta)j},
\end{equation*}
provided
\[
  \Omega+\frac{2+2\nu-\alpha-\theta}{2}-\omega\geqslant0,
  \qquad 0\leqslant\theta\leqslant2+2\nu-\alpha .
\]
To see this, split again into $a\leqslant t/2$ and $a>t/2$.  On the first part one
uses $t-a\simeq t$.  On the second part one has $a\simeq t$ and
uses $\int_0^{t/2} (1\wedge (b2^{2j})^{-N}) b^\nu\,\mathrm{d}b\lesssim 2^{-(\alpha+\theta)j}t^{1+\nu-\frac{\alpha+\theta}2}$.
Consequently
\[
  t^\Omega 2^{(\alpha+\beta+\theta)j}
  \|I_{2,j}(t)\|_{L^\infty(w_1w_2)}
  \lesssim
  \|f\|_{\mathbb S_T^{\alpha,\omega;\nu}(w_1)}
  \|g\|_{L_T^\infty C^\beta(w_2)} .
\]
Combining the estimates for $I_{1,j}$ and $I_{2,j}$ and taking the supremum in
$j$ proves \eqref{eq:app-lemma313-style-comm-Omega}.
\end{proof}

We finally record the weighted paralinearization estimate used in the exponential uniqueness argument.
The proof below follows the argument of Section \ref{sec:transport-representation}, with the exponential-polynomial weight inserted at the relevant places.

\begin{lemma}[Weighted paralinearization]\label{lem:weighted-paralinearization}
	Let \(0<\alpha<1\), let \(G\in C_b^2(\mathbb R)\), and assume that \(G''\) is globally Lipschitz.  Put
	\[
		\mathcal R_G(f):=G(f)-G'(f)\para f .
	\]
	Fix a finite polynomial input exponent \(M_{a}\geqslant0\), and set
	\[
		A_{M_{a}}(f,g):=1+\norm{f}_{C^\alpha(\rho_{M_{a}})}+\norm{g}_{C^\alpha(\rho_{M_{a}})} .
	\]
	Then there exists another finite polynomial exponent \(M_{*}=2M_a\),  such that
	\begin{equation*}
	\begin{aligned}
		&\norm{\mathcal R_G(f)-\mathcal R_G(g)}_{C^{2\alpha}(\mathfrak e_\tau\rho_{M_{*}})}  \\
		&\qquad\leqslant C_G\Bigl(
		\norm{f-g}_{C^\alpha(\mathfrak e_\tau)}A_{M_{a}}(f,g)
		+\norm{f-g}_{L^\infty(\mathfrak e_\tau)}A_{M_{a}}(f,g)^2
		\Bigr).
	\end{aligned}
	\end{equation*}
	
\end{lemma}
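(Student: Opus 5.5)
I will follow the kernel decomposition of Section~\ref{sec:transport-representation}, applied to a difference. Write
\[
  \mathcal R_G(f)=\sum_{i\geqslant-1}v_i(f),\qquad
  v_i(f)(x)=\iint K_i(x-y)K_{<i-1}(x-z)\,\Phi_f(y,z)\,\mathrm{d}y\,\mathrm{d}z,
\]
with
\[
  \Phi_f(y,z)=\int_0^1\!\!\int_0^1\tau G''\bigl(f(z)+\tau r\,\delta_{yz}f\bigr)\,\mathrm{d}r\,\mathrm{d}\tau\,(\delta_{yz}f)^2 .
\]
Since each $v_i$ has Fourier support in an annulus of size $2^i$ (up to the usual finitely many overlapping blocks), it suffices to prove the dyadic bound
\[
  \|v_i(f)-v_i(g)\|_{L^\infty(\mathfrak e_\tau\rho_{M_*})}\lesssim 2^{-2\alpha i}\,(\text{RHS}).
\]

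Set $h=f-g$. I would split $\Phi_f-\Phi_g$ into two pieces. The first piece is
\[
  \iint\tau\,G''(\cdots f\cdots)\,\bigl(\delta_{yz}f+\delta_{yz}g\bigr)\,\delta_{yz}h .
\]
It is bounded by $\|G''\|_\infty|\delta_{yz}h|\bigl(|\delta_{yz}f|+|\delta_{yz}g|\bigr)$. The second piece is
\[
  \iint\tau\bigl(G''(\cdots f\cdots)-G''(\cdots g\cdots)\bigr)(\delta_{yz}g)^2 .
\]
Here I use that $G''$ is Lipschitz, which gives the bound $\bigl(|h(z)|+|\delta_{yz}h|\bigr)|\delta_{yz}g|^2$. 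The weighted increment bounds then come in the schematic form used for $I_{2,i}$:
\begin{itemize}
\item $|\delta_{yz}f|\lesssim \|f\|_{C^\alpha(\rho_{M_a})}\,\rho_{M_a}(x)^{-1}|y-z|^\alpha\langle x-y\rangle^N\langle x-z\rangle^N$;
\item for $h$ the analogous bound holds with $\mathfrak e_\tau(x)^{-1}$ in place of $\rho_{M_a}(x)^{-1}$, and with the factor $\exp\{c|x-y|^\varrho+c|x-z|^\varrho\}$ in place of polynomial factors.
\end{itemize}
These give:
\begin{itemize}
\item for the first piece, $2^{-2\alpha i}\|h\|_{C^\alpha(\mathfrak e_\tau)}A\,\rho_{M_a}(x)^{-1}\mathfrak e_\tau(x)^{-1}$;
\item for the second piece, $2^{-2\alpha i}\bigl(\|h\|_{L^\infty(\mathfrak e_\tau)}+\|h\|_{C^\alpha(\mathfrak e_\tau)}\bigr)A^2\rho_{M_a}(x)^{-2}\mathfrak e_\tau(x)^{-1}$.
\end{itemize}
The $\delta_{yz}h$ part of the second piece actually carries three increments and is at least as good. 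Hence the loss is at most $\rho_{2M_a}^{-1}$, which explains $M_*=2M_a$. In the mixed term $\|h\|_{C^\alpha}A^2$, I keep only $A$ by using $|G''|\le C$ and $|\delta g|\cdot|\text{Lip}\,G''|\cdot|\delta_{yz} h|$ with $\min(1,\cdot)$. Alternatively I accept $A^2$; the stated bound tolerates this after noting $A\ge 1$. If needed, I would route the $\delta_{yz}h$ part into the first piece by symmetrizing the Taylor formula.

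The main obstacle is integrating against the kernels with the exponential weight ratio $\mathfrak e_\tau(x)/\mathfrak e_\tau(z)\lesssim\exp\{c|x-z|^\varrho\}$. Polynomial kernel decay is not enough here, so I would take $K_i$ and $K_{<i-1}$ with Gevrey cutoffs of order $1/\zeta_{\rm G}$ with $\zeta_{\rm G}>\varrho$, exactly as in the proof of Lemma~\ref{lem:weighted-time-freezing-commutator}. This gives the decay $|K_i(z)|\lesssim 2^{2i}e^{-c(2^i|z|)^{\zeta_{\rm G}}}$. The moments
\[
  \int |K_i(w)|\,|w|^{2\alpha}e^{c|w|^\varrho}\langle w\rangle^{N}\,\mathrm{d}w
\]
are then bounded by $2^{-2\alpha i}$ uniformly in $i$ and in $\tau$ in a bounded interval, and the same holds for $K_{<i-1}$ (a low-pass kernel, handled similarly). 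For $i=-1$ and the low blocks, only the boundedness of the moments is used. Summing the dyadic bounds, with the $\rho_{M_*}$ weight absorbing the two polynomial factors, gives the claimed $C^{2\alpha}(\mathfrak e_\tau\rho_{M_*})$ estimate.
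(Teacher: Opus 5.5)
Your argument follows the paper's proof. It uses the same kernel representation of $\Delta_iG(f)-S_{i-1}G'(f)\Delta_if$ through the second-order Taylor remainder. It splits $T_G(f(y),f(z))-T_G(g(y),g(z))$ the same way, into a $G''\cdot(\delta_{yz}f+\delta_{yz}g)\,\delta_{yz}h$ piece and a $\bigl(G''(\cdot)-G''(\cdot)\bigr)(\delta_{yz}g)^2$ piece. It uses the same Gevrey-cutoff moment bound to absorb $\mathfrak e_\tau(x)/\mathfrak e_\tau(y)\lesssim e^{c|x-y|^\varrho}$. And $M_*=2M_a$ comes, as in the paper, from the two $\rho_{M_a}$-weighted increments.

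One step needs correcting. In the Lipschitz piece you bound the argument difference by $|h(z)|+|\delta_{yz}h|$ and then measure $\delta_{yz}h$ in $C^\alpha(\mathfrak e_\tau)$. This produces a term $\|h\|_{C^\alpha(\mathfrak e_\tau)}A^2$, which the stated right-hand side does not control.

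Your fallback argument runs the wrong way. Since $A\geqslant1$, we have $A^2\geqslant A$, so a bound with $A^2$ is weaker than the claimed one, not covered by it. The $\min(1,\cdot)$ device does not help either, because it cannot remove a factor of $\delta_{yz}g$, which is unbounded.

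The fix is what the paper does. In your notation the argument difference is exactly $(1-\tau r)h(z)+\tau r\,h(y)$, so it is bounded by $|h(y)|+|h(z)|$ and only $\|h\|_{L^\infty(\mathfrak e_\tau)}$ is needed. The factor $(\delta_{yz}g)^2$ already supplies $2^{-2\alpha i}\rho_{M_a}(x)^{-2}$. The whole Lipschitz piece therefore lands in $\|h\|_{L^\infty(\mathfrak e_\tau)}A^2$, and no symmetrization is required.
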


\begin{proof}
Write \(h=f-g\) and \(A=A_{M_{a}}(f,g)\).  As in Section \ref{sec:transport-representation}, decompose
\[
  \mathcal R_G(f)=\sum_{i\geqslant-1}u_i^f,
  \qquad
  u_i^f:=\Delta_iG(f)-S_{i-1}G'(f)\Delta_i f .
\]
For \(i\geqslant1\), let \(K_i\) be the kernel of \(\Delta_i\), and let \(K_{<i-1}\) be the kernel of \(S_{i-1}\).  Since \(\int K_i=0\) and \(\int K_{<i-1}=1\), the same computation as in Section \ref{sec:transport-representation} gives
\[
  u_i^f(x)=\iint K_i(x-y)K_{<i-1}(x-z)
  T_G\bigl(f(y),f(z)\bigr)\,\mathrm{d}y\,\mathrm{d}z,
\]
where
\[
  T_G(a,b):=G(a)-G(b)-G'(b)(a-b).
\]
The second-order Taylor formula and the Lipschitz property of \(G''\) imply the pointwise difference bound
\begin{equation*}
\begin{aligned}
&\bigl|T_G(f(y),f(z))-T_G(g(y),g(z))\bigr| \\
&\quad\lesssim_G
  \bigl(|\delta_{yz}f|+|\delta_{yz}g|\bigr)|\delta_{yz}h|
  +\bigl(|h(y)|+|h(z)|\bigr)
    \bigl(|\delta_{yz}f|^2+|\delta_{yz}g|^2\bigr),
\end{aligned}
\end{equation*}
with \(\delta_{yz}q=q(y)-q(z)\).  This is the only place where the Lipschitz bound on \(G''\) is used.
Indeed, by the integral form of the second-order Taylor remainder, for any \(a,b\in\mathbb R\),
\[
T_G(a,b)
=
G(a)-G(b)-G'(b)(a-b)
=
(a-b)^2\int_0^1 (1-\theta)
G''\bigl(b+\theta(a-b)\bigr)\,\mathrm{d}\theta .
\]
We apply this with
\[
a=f(y),\qquad b=f(z),\qquad c=g(y),\qquad d=g(z).
\]
Then
\[
\begin{aligned}
&T_G(f(y),f(z))-T_G(g(y),g(z))  \\
&\quad =
\int_0^1 (1-\theta)
\Big[
G''\bigl(b+\theta\delta_{yz}f\bigr)(\delta_{yz}f)^2
-
G''\bigl(d+\theta\delta_{yz}g\bigr)(\delta_{yz}g)^2
\Big]\,\mathrm{d}\theta .
\end{aligned}
\]
We split the integrand as
\[
\begin{aligned}
& G''\bigl(b+\theta\delta_{yz}f\bigr)
\bigl((\delta_{yz}f)^2-(\delta_{yz}g)^2\bigr) \\
&\quad +
\Big[
G''\bigl(b+\theta\delta_{yz}f\bigr)
-
G''\bigl(d+\theta\delta_{yz}g\bigr)
\Big](\delta_{yz}g)^2 .
\end{aligned}
\]
We have
\[
\bigl|(\delta_{yz}f)^2-(\delta_{yz}g)^2\bigr|
\leqslant
\bigl(|\delta_{yz}f|+|\delta_{yz}g|\bigr)
|\delta_{yz}h|.
\]
By the Lipschitz continuity of \(G''\),
\[
\begin{aligned}
&\bigl|
G''\bigl(b+\theta\delta_{yz}f\bigr)
-
G''\bigl(d+\theta\delta_{yz}g\bigr)
\bigr|   \lesssim |h(z)|+|h(y)| .
\end{aligned}
\]
Combining the previous estimates yields the bound.

We now insert the weights.  The compactly supported Gevrey cutoffs used for the exponential topology give the kernel moment estimate, uniformly for \(\tau\) in the fixed bounded interval,
\begin{equation*}
  \int |K_i(x-y)|\,\langle 2^i(x-y)\rangle^{N}
       \frac{\mathfrak e_\tau(x)}{\mathfrak e_\tau(y)}\,\mathrm{d}y
  +
  \int |K_{<i-1}(x-z)|\,\langle 2^i(x-z)\rangle^{N}
       \frac{\mathfrak e_\tau(x)}{\mathfrak e_\tau(z)}\,\mathrm{d}z
  \lesssim 1,
\end{equation*}
for every fixed \(N\); the proof is the same subexponential moment estimate used in Lemma~\ref{lem:weighted-time-freezing-commutator}. The remaining argument is the same as in Section \ref{sec:transport-representation} and \cite[Lemma 5.2]{GIP15}. Choosing \(M_*= 2M_a\), the polynomial factors coming from the two
increments of \(f\) and \(g\) are absorbed by the  weight
\(\rho_{M_*}\).
\end{proof}

\subsection{Some basic estimates}

The following result is proved in \cite[Lemma~2.7]{ZZZ22}.

\begin{lemma}\label{lem:interp}
Let $0\leqslant \theta\leqslant1$, $\beta=(1-\theta)\beta_0+\theta\beta_1$, and $\rho_D=\rho^{(1-\theta)}_{D_0}\rho^{\theta}_{ D_1}$.  Then
\begin{equation*}
  \norm{f}_{C^\beta(\rho_D)}
  \lesssim
  \norm{f}_{C^{\beta_0}(\rho_{D_0})}^{1-\theta}
  \norm{f}_{C^{\beta_1}(\rho_{D_1})}^{\theta}.
\end{equation*}
Consequently, if $f\in L_T^\infty C^{\beta_0}(\rho_{D_0})\cap C_T^{\beta_1,\gamma}(\rho_{D_1})$, then
\begin{equation*}
  \norm{f}_{C_T^{\beta,\theta\gamma}(\rho_D)}
  \lesssim
  \norm{f}_{L_T^\infty C^{\beta_0}(\rho_{D_0})}^{1-\theta}
  \norm{f}_{C_T^{\beta_1,\gamma}(\rho_{D_1})}^{\theta}.
\end{equation*}
If $\beta_0=0$, $\beta=\theta\beta_1$, then it holds that
\begin{equation*}
  \norm{f}_{\mathbb S_T^{\beta,\theta\gamma;\theta\nu}(\rho_D)}
  \lesssim
  \norm{f}_{L_T^\infty L^\infty(\rho_{D_0})}^{1-\theta}
  \norm{f}_{\mathbb S_T^{\beta_1,\gamma;\nu}(\rho_{D_1})}^{\theta}.
\end{equation*}
\end{lemma}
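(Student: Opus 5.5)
The statement is the interpolation Lemma~\ref{lem:interp}. The plan is to prove the spatial inequality blockwise from the dyadic definition \eqref{eq:weighted-holder} and then deduce the two time-dependent versions pointwise in time.

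\textbf{Spatial inequality.} For each $j\geqslant-1$, the weight factorizes pointwise as $\rho_D=\rho_{D_0}^{1-\theta}\rho_{D_1}^{\theta}$. Hence
\[
  \rho_D|\Delta_jf|=\bigl(\rho_{D_0}|\Delta_jf|\bigr)^{1-\theta}\bigl(\rho_{D_1}|\Delta_jf|\bigr)^{\theta}.
\]
Taking the supremum in $x$ gives
\[
  \norm{\rho_D\Delta_jf}_{L^\infty}\leqslant\norm{\rho_{D_0}\Delta_jf}_{L^\infty}^{1-\theta}\norm{\rho_{D_1}\Delta_jf}_{L^\infty}^{\theta}.
\]
Since $2^{\beta j}=(2^{\beta_0 j})^{1-\theta}(2^{\beta_1 j})^\theta$, multiplying and taking $\sup_j$ yields the first inequality with constant $1$. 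So the first claim is essentially exact.

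\textbf{Time-weighted version.} Apply the spatial inequality at each fixed $t$ and multiply by $t^{\theta\gamma}$:
\[
  t^{\theta\gamma}\norm{f(t)}_{C^\beta(\rho_D)}\leqslant\norm{f(t)}_{C^{\beta_0}(\rho_{D_0})}^{1-\theta}\bigl(t^\gamma\norm{f(t)}_{C^{\beta_1}(\rho_{D_1})}\bigr)^\theta.
\]
Taking $\sup_t$ gives the second claim.

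\textbf{The $\mathbb S$ version ($\beta_0=0$).} Here $C^0$ is controlled by $L^\infty$, i.e.\ $\norm{g}_{C^0(\rho)}\lesssim\norm{g}_{L^\infty(\rho)}$. This holds because $\Delta_j$ has an $L^1$ kernel with finite polynomial moments, so by Lemma~\ref{lem:weighted-lp-annular} it is uniformly bounded on $L^\infty(\rho)$. This handles the spatial part of the norm.

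For the time-H\"older part, fix $s<t$ and set $\delta=f(t)-f(s)$, so that $s^{\theta\gamma}\norm{\delta}_{L^\infty(\rho_D)}$ must be bounded by $|t-s|^{\theta\nu}$ times the right-hand side. Pointwise, $\rho_D|\delta|=(\rho_{D_0}|\delta|)^{1-\theta}(\rho_{D_1}|\delta|)^\theta$. The two factors are estimated as follows:
\begin{itemize}
  \item $\rho_{D_0}|\delta|\leqslant2\norm{f}_{L_T^\infty L^\infty(\rho_{D_0})}$;
  \item $s^\gamma\rho_{D_1}|\delta|\leqslant|t-s|^\nu\norm{f}_{\mathbb S_T^{\beta_1,\gamma;\nu}(\rho_{D_1})}$.
\end{itemize}
Raising these to the powers $1-\theta$ and $\theta$ gives exactly the weight $s^{\theta\gamma}$ and the increment $|t-s|^{\theta\nu}$.

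\textbf{Main obstacle.} There is no real obstacle. The only points needing care are the $C^0$ versus $L^\infty$ comparison in the weighted setting, handled as above, and confirming that the time weights combine consistently, since $s\leqslant t$ is used in both factors. If one prefers, the whole lemma can also be cited from \cite[Lemma~2.7]{ZZZ22}.
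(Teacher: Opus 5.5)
Your proof is correct. The pointwise factorization of the weight gives the first two inequalities with constant $1$, and the same pointwise interpolation of $f(t)-f(s)$, together with $\|g\|_{C^0(\rho)}\lesssim\|g\|_{L^\infty(\rho)}$, gives the $\mathbb S$ version. That last bound follows from \eqref{eq:weight-multiplier-equivalence} plus the unweighted bound, or from Peetre's inequality and the kernel moments of $\Delta_j$.

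The paper gives no argument of its own and simply cites \cite[Lemma~2.7]{ZZZ22}; your blockwise computation is the standard direct proof of that cited statement.
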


\begin{lemma}\label{lem:mild-time-increments}
Let $0<\sigma<2$ and $0\leqslant\omega<1$.
\begin{enumerate}[label=\textup{(\roman*)}]
\item If $h\in L_T^{\infty,\omega}C^{\sigma-2}(\rho_A)$ and $z=\sI h$, then
\begin{equation*}
  \norm{z}_{C_T^{\sigma,\omega}(\rho_A)}
  +\norm{z}_{C_T^{\sigma/2,\omega}L^\infty(\rho_A)}
  \lesssim
  \norm{h}_{L_T^{\infty,\omega}C^{\sigma-2}(\rho_A)}.
\end{equation*}
\item If $\xi\in C^\beta(\rho_A)$, $0<\beta\leqslant\sigma<2$, and $z(t)=P_t\xi$, then
\begin{equation*}
  \norm{z}_{C_T^{\sigma,(\sigma-\beta)/2}(\rho_A)}
  +\norm{z}_{C_T^{\sigma/2,(\sigma-\beta)/2}L^\infty(\rho_A)}
  \lesssim \norm{\xi}_{C^\beta(\rho_A)}.
\end{equation*}
\end{enumerate}
\end{lemma}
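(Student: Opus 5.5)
The final lemma stated is Lemma~\ref{lem:mild-time-increments}, the weighted Schauder and time-increment estimates for $\sI h$ and $P_t\xi$. I would prove both parts blockwise in Littlewood--Paley frequency, using the weighted heat-kernel moment bound. The basic input is that for polynomial weights $\rho_A$ and every $N$,
\[
  \|P_t\Delta_j f\|_{L^\infty(\rho_A)}\lesssim e^{-ct2^{2j}}\|\Delta_j f\|_{L^\infty(\rho_A)},\qquad j\geqslant0 .
\]
This follows, as in the proof of Lemma~\ref{lem:parabolic-I-comm}, by inserting the weight ratio $\rho_A(x)/\rho_A(x-z)\lesssim\langle z\rangle^{|A|}$ and integrating it against the rapidly decaying kernel of $P_t\Delta_j$. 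The moment estimate is uniform in $j$ since $2^{-j}\leqslant1$. For $j=-1$ we only use boundedness. I would also use the weighted Bernstein-type bound for the time derivative: $\|(P_{t}-P_{s})\Delta_j f\|_{L^\infty(\rho_A)}\lesssim (1\wedge |t-s|2^{2j})\|\Delta_jf\|_{L^\infty(\rho_A)}$.

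For (i), the spatial part comes from
\[
  t^\omega 2^{\sigma j}\|\Delta_j z(t)\|_{L^\infty(\rho_A)}\lesssim t^\omega2^{\sigma j}\int_0^t e^{-c(t-a)2^{2j}}2^{(2-\sigma)j}a^{-\omega}\,\mathrm{d}a\;\|h\|_{L_T^{\infty,\omega}C^{\sigma-2}(\rho_A)} .
\]
I would split the integral at $a=t/2$, exactly as in the elementary bound in the proof of Lemma~\ref{lem:parabolic-I-comm}. On $[t/2,t]$ one has $a\simeq t$ and $\int e^{-cb2^{2j}}2^{2j}\,\mathrm{d}b\lesssim1$. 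On $[0,t/2]$ one has $e^{-ct2^{2j}}2^{2j}t\lesssim1$ and $\int_0^{t/2}a^{-\omega}\,\mathrm{d}a\lesssim t^{1-\omega}$. For the time increment with $s<t$, I would write
\[
  z(t)-z(s)=(P_{t-s}-1)z(s)+\int_s^tP_{t-a}h(a)\,\mathrm{d}a .
\]
For the first term, sum over blocks using $s^\omega\|\Delta_j z(s)\|_{L^\infty(\rho_A)}\lesssim 2^{-\sigma j}$ together with $(1\wedge|t-s|2^{2j})$. Splitting at $2^{2j}\simeq|t-s|^{-1}$ gives $|t-s|^{\sigma/2}$, since $0<\sigma<2$. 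For the second term, sum $\int_s^t e^{-c(t-a)2^{2j}}2^{(2-\sigma)j}a^{-\omega}\,\mathrm{d}a$ over $j$, splitting again at $2^{2j}\simeq|t-a|^{-1}$. This gives $\int_s^t (t-a)^{\sigma/2-1}a^{-\omega}\,\mathrm{d}a\leqslant s^{-\omega}|t-s|^{\sigma/2}\cdot C$.

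For (ii), I would use $2^{\sigma j}\|P_t\Delta_j\xi\|_{L^\infty(\rho_A)}\lesssim 2^{(\sigma-\beta)j}e^{-ct2^{2j}}\|\xi\|_{C^\beta(\rho_A)}\lesssim t^{-(\sigma-\beta)/2}\|\xi\|_{C^\beta(\rho_A)}$. The time increment is $(P_{t-s}-1)P_s\xi$, and I would estimate it as in the first term of (i), with $s^{(\sigma-\beta)/2}$ playing the role of $s^\omega$.

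The main technical point is uniformity of the weighted kernel bounds in $j$ and $t$. Polynomial weights make this routine, so the only real care is in the endpoint summations, where the strict inequalities $\sigma<2$ and $\omega<1$ are what is used.
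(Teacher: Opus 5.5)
Your proof is correct and follows the paper's route: the paper cites the weighted heat-semigroup and Schauder bounds of \cite[Lemmas~2.8--2.9]{ZZZ22} together with the elementary splitting of the time convolution at $a=t/2$. Your blockwise argument (weighted kernel moments, the factor $(1\wedge|t-s|2^{2j})$, and the decomposition $z(t)-z(s)=(P_{t-s}-1)z(s)+\int_s^tP_{t-a}h(a)\,\mathrm{d}a$) spells out the content of those lemmas; the only slip is notational, since the increment bound you state for $(P_t-P_s)\Delta_j$ should read $(P_{t-s}-1)\Delta_j$, which is how you actually use it.
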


\begin{proof}
Parts \textup{(i)}--\textup{(ii)} are the weighted heat-semigroup and Schauder estimates of \cite[Lemmas~2.8--2.9]{ZZZ22}, with the same elementary convolution estimate and the harmless extra factor \(t^{-\omega}\).
\end{proof}

\label{sec:appendix-unique-weighted}
We also use the following elementary Volterra lemma.  For later use we introduce the following notation:
\begin{equation}\label{eq:volterra-envelope-def}
	\bigl(\mathcal V_{\mu,\lambda}X\bigr)(t)
	:=\sup_{0<r\leqslant t}\int_0^r(r-a)^{-\mu}a^{-\lambda}X(a)\,\mathrm{d}a .
\end{equation}

\begin{lemma}\label{lem:duhamel-volterra-reduction}
Let \(T>0\), \(0<\varrho<1\), and let \(X:(0,T]\to[0,\infty)\) be bounded and measurable.  Let \(h\) be a time-dependent distribution and set
\[
	z(t):=(\sI h)(t)=\int_0^tP_{t-a}h(a)\,\mathrm{d}a .
\]
Assume that for some \(A_h\geqslant0\), \(M\geqslant0\), \(0\leqslant\lambda<1\), and \(\theta\in\mathbb R\),
\begin{equation}\label{eq:app-duhamel-volterra-assumption}
	\|h(a)\|_{C^\theta(\mathfrak e_t)}
	\leqslant A_h(t-a)^{-M/\varrho}a^{-\lambda}X(a),
	\qquad 0<a<t\leqslant T .
\end{equation}
Fix  \(\beta\in\mathbb R\) and  \(\Omega\geqslant0\).  Put
\begin{equation*}
	\mu:=\frac{(\beta-\theta)_+}{2}+\frac{M}{\varrho},
	\qquad
	\mu_0:=\frac{(-\theta)_+}{2}+\frac{M}{\varrho}.
\end{equation*}

\begin{enumerate}[label=\textup{(\roman*)}]
\item If
\begin{equation*}
	\mu<1,
	\qquad
	\mu+\lambda-\Omega<1,
\end{equation*}
then
\begin{equation}\label{eq:app-duhamel-volterra-spatial-out}
\begin{aligned}
	\sup_{0<t\leqslant T}t^\Omega\|z(t)\|_{C^\beta(\mathfrak e_t)}
	&\leqslant C A_h
	\sum_{(\bar\mu,\bar\lambda)\in\mathfrak K_{\rm sp}}
	\bigl(\mathcal V_{\bar\mu,\bar\lambda}X\bigr)(T),
\end{aligned}
\end{equation}
where
\begin{equation*}
	\mathfrak K_{\rm sp}:=
	\Bigl\{\bigl(\mu,(\lambda-\Omega)_+\bigr),
	\bigl((\mu-\Omega)_+,\lambda\bigr)\Bigr\}.
\end{equation*}
Every pair in \(\mathfrak K_{\rm sp}\) satisfies \(0\leqslant\bar\mu,\bar\lambda<1\) and \(\bar\mu+\bar\lambda<1\).

\item Let \(0<\nu\leqslant\beta/2\).  If, in addition,
\begin{equation*}
	\mu_0+\nu<1,
	\qquad
	\mu_0+\nu+\lambda-\Omega<1,
\end{equation*}
then
\begin{equation}\label{eq:app-duhamel-volterra-holder-out}
\begin{aligned}
	\sup_{0<s<t\leqslant T}s^\Omega
	\frac{\|z(t)-z(s)\|_{L^\infty(\mathfrak e_t)}}{|t-s|^\nu}
	&\leqslant C A_h
	\sum_{(\bar\mu,\bar\lambda)\in\mathfrak K_{\rm tm}}
	\bigl(\mathcal V_{\bar\mu,\bar\lambda}X\bigr)(T),
\end{aligned}
\end{equation}
where
\begin{equation*}
	\mathfrak K_{\rm tm}:=
	\Bigl\{\bigl(\mu_0+\nu,(\lambda-\Omega)_+\bigr),
	\bigl((\mu_0+\nu-\Omega)_+,\lambda\bigr)\Bigr\}.
\end{equation*}
Every pair in \(\mathfrak K_{\rm tm}\) satisfies \(0\leqslant\bar\mu,\bar\lambda<1\) and \(\bar\mu+\bar\lambda<1\).
\end{enumerate}
\end{lemma}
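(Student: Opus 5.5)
We need to prove Lemma \ref{lem:duhamel-volterra-reduction}: Duhamel estimates in exponential weights reduced to Volterra envelopes.

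The plan is to estimate the spatial norm and the time increments separately, working directly from the mild formula.

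\emph{Spatial part.} For $0<t\leqslant T$, write $z(t)=\int_0^tP_{t-a}h(a)\,\mathrm{d}a$. The key tool is the weighted heat-semigroup smoothing bound
\[
\|P_{t-a}g\|_{C^\beta(\mathfrak e_t)}\lesssim (t-a)^{-(\beta-\theta)_+/2}\|g\|_{C^\theta(\mathfrak e_t)},
\]
uniformly for $t$ in the bounded interval. This is the exponential-weight version of Lemma~\ref{lem:mild-time-increments}; it is proved with the Gevrey kernel moment bounds from the proof of Lemma~\ref{lem:weighted-time-freezing-commutator}, which absorb the weight ratio $\mathfrak e_t(x)/\mathfrak e_t(x-y)$. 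Combining this with \eqref{eq:app-duhamel-volterra-assumption} gives
\[
t^\Omega\|z(t)\|_{C^\beta(\mathfrak e_t)}\leqslant CA_h\,t^\Omega\int_0^t(t-a)^{-\mu}a^{-\lambda}X(a)\,\mathrm{d}a .
\]
To turn this into the envelope form, split the integral at $a=t/2$. On $[0,t/2]$ we have $t\lesssim t-a$, so $t^\Omega\lesssim (t-a)^{\Omega}$ and the kernel becomes $(t-a)^{-(\mu-\Omega)_+}a^{-\lambda}$. On $[t/2,t]$ we have $t\simeq a$, so $t^\Omega a^{-\lambda}\lesssim a^{-(\lambda-\Omega)_+}$ and the kernel becomes $(t-a)^{-\mu}a^{-(\lambda-\Omega)_+}$. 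In each case the remaining positive powers of $T$ are bounded by constants. Both pieces are then dominated by $\mathcal V_{\bar\mu,\bar\lambda}X(t)$ with $(\bar\mu,\bar\lambda)\in\mathfrak K_{\rm sp}$, and hence by the envelope at time $T$, since $\mathcal V$ is a supremum over $r\leqslant T$.

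It remains to check the constraints on the pairs. The conditions $\bar\mu<1$ and $\bar\lambda<1$ follow from $\mu<1$ and $\lambda<1$. For the sum, $\bar\mu+\bar\lambda<1$ follows from $\mu<1$ and $\mu+\lambda-\Omega<1$, by distinguishing whether the positive parts are active.

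\emph{Time increments.} For $s<t$, decompose
\[
z(t)-z(s)=\int_s^tP_{t-a}h(a)\,\mathrm{d}a+(P_{t-s}-1)z(s).
\]
It is better to expand the second term as $\int_0^s(P_{t-s}-1)P_{s-a}h(a)\,\mathrm{d}a$. Then use the bound
\[
\|(P_{t-s}-1)g\|_{L^\infty}\lesssim|t-s|^\nu\|g\|_{C^{2\nu}},
\]
together with the smoothing from $C^\theta$ to $C^{2\nu}$. This produces the kernel $(s-a)^{-(2\nu-\theta)_+/2}\leqslant (s-a)^{-\mu_0-\nu}$ times the weight factor $(s-a)^{-M/\varrho}$. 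For the first term, bound it in $L^\infty$ through the $C^0$ smoothing, which produces $(t-a)^{-\mu_0}$. Then use $\int_s^t(t-a)^{-\mu_0}a^{-\lambda}X\lesssim|t-s|^\nu\int_s^t(t-a)^{-\mu_0-\nu}a^{-\lambda}X$, valid since $t-a\leqslant t-s$.

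The two weights must also be reconciled. The assumption is stated with $\mathfrak e_t$ at the terminal time, and $\mathfrak e_t\leqslant\mathfrak e_s$, so $\|\cdot\|_{L^\infty(\mathfrak e_t)}\leqslant\|\cdot\|_{L^\infty(\mathfrak e_s)}$. The hypothesis with terminal time $s$ then applies for $a<s$. After multiplying by $s^\Omega$, the same $a\gtrless s/2$ split produces the pairs in $\mathfrak K_{\rm tm}$, and the sum conditions follow from the stated inequalities.

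\emph{Main obstacle.} The delicate step is the weighted smoothing and increment bound in the time-dependent exponential weight. This is where the moving weight index ($\mathfrak e_t$ versus $\mathfrak e_s$ or $\mathfrak e_a$) and the loss $(t-a)^{-M/\varrho}$ interact. The plan is to rely on monotonicity of $\mathfrak e_\cdot$ and on the uniform Gevrey moment bounds, checking that these cost no more than constant factors.
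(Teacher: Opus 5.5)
Your proposal is correct and follows essentially the same route as the paper: weighted heat smoothing combined with the hypothesis for the spatial part, and for time increments the split $z(t)-z(s)=\int_0^s(P_{t-a}-P_{s-a})h(a)\,\mathrm{d}a+\int_s^tP_{t-a}h(a)\,\mathrm{d}a$, using $|t-s|^{-\nu}\leqslant(t-a)^{-\nu}$ on the terminal piece. The differences are only cosmetic. You split the integral at $a=t/2$ (resp. $s/2$), whereas the paper uses $t^\Omega\lesssim a^\Omega+(t-a)^\Omega$ directly. You reconcile the weights through $\mathfrak e_t\leqslant\mathfrak e_s$ and the hypothesis at terminal time $s$, whereas the paper keeps $\mathfrak e_t$ and uses $(t-a)^{-M/\varrho}\leqslant(s-a)^{-M/\varrho}$.
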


\begin{proof}
  By \eqref{eq:app-duhamel-volterra-assumption} and \cite[Lemma 2.8]{ZZZ22},
\[
\begin{aligned}
	t^\Omega\|z(t)\|_{C^\beta(\mathfrak e_t)}
	&\lesssim A_h t^\Omega\int_0^t
	(t-a)^{-\frac{(\beta-\theta)_+}{2}}
	(t-a)^{-M/\varrho}a^{-\lambda}X(a)\,\mathrm{d}a  \\
	&=A_h t^\Omega\int_0^t(t-a)^{-\mu}a^{-\lambda}X(a)\,\mathrm{d}a .
\end{aligned}
\]
Since \(t^\Omega\lesssim a^\Omega+(t-a)^\Omega\), and negative powers are discarded by the convention \(r_+=\max\{r,0\}\), this gives \eqref{eq:app-duhamel-volterra-spatial-out}.
For time increments, write for \(0<s<t\leqslant T\)
\[
	z(t)-z(s)=\int_0^s(P_{t-a}-P_{s-a})h(a)\,\mathrm{d}a
	+
	\int_s^tP_{t-a}h(a)\,\mathrm{d}a
	=:J_1+J_2 .
\]
For the terminal part \(J_2\), \cite[Lemma 2.8]{ZZZ22} and \(|t-s|^{-\nu}\leqslant(t-a)^{-\nu}\) on \([s,t]\) yield
\[
\begin{aligned}
	s^\Omega|t-s|^{-\nu}\|J_2\|_{L^\infty(\mathfrak e_t)}
	&\lesssim A_h s^\Omega\int_s^t(t-a)^{-\mu_0-\nu}a^{-\lambda}X(a)\,\mathrm{d}a \\
	&\lesssim A_h\int_0^t(t-a)^{-\mu_0-\nu}a^{-(\lambda-\Omega)_+}X(a)\,\mathrm{d}a,
\end{aligned}
\]
because \(s\leqslant a\) on the terminal interval.
For \(J_1\), \cite[Lemma 2.8]{ZZZ22} gives
\[
	\|(P_{t-a}-P_{s-a})h(a)\|_{L^\infty(\mathfrak e_t)}
	\lesssim A_h |t-s|^\nu(s-a)^{-\mu_0-\nu}a^{-\lambda}X(a),
	\qquad 0<a<s.
\]
Here we used \((t-a)^{-M/\varrho}\leqslant(s-a)^{-M/\varrho}\).  Therefore
\[
\begin{aligned}
	s^\Omega|t-s|^{-\nu}\|J_1\|_{L^\infty(\mathfrak e_t)}
	&\lesssim A_h s^\Omega\int_0^s(s-a)^{-\mu_0-\nu}a^{-\lambda}X(a)\,\mathrm{d}a .
\end{aligned}
\]
Using \(s^\Omega\lesssim a^\Omega+(s-a)^\Omega\) gives the two pairs in \(\mathfrak K_{\rm tm}\).  This proves \eqref{eq:app-duhamel-volterra-holder-out} and completes the proof.
\end{proof}

\begin{lemma}[Volterra--Gronwall]\label{lem:volterra-gronwall}
Let \(X:[0,T]\to[0,\infty)\) be bounded and measurable.  Assume that, for finitely many indices \(m\),
\[
	0\leqslant\mu_m,\lambda_m<1,
	\qquad \mu_m+\lambda_m<1,
\]
and that
\[
	X(t)\leqslant \sum_m C_m\bigl(\mathcal V_{\mu_m,\lambda_m}X\bigr)(t),
	\qquad 0<t\leqslant T .
\]
Then \(X\equiv0\) on \([0,T]\).
\end{lemma}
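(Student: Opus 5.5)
We prove the Volterra--Gronwall lemma by showing that the sup-envelope inequality iterated finitely many times gives an integrable kernel with a small prefactor on a short interval, and then propagate to $[0,T]$. Set $\bar X(t):=\sup_{0<r\leqslant t}X(r)$, which is bounded, nondecreasing and by assumption satisfies the same inequality since the right-hand side is already nondecreasing in $t$. For each $m$ the envelope is bounded by $\bar X(t)\sup_{r\leqslant t}\int_0^r(r-a)^{-\mu_m}a^{-\lambda_m}\,\mathrm{d}a = B(1-\mu_m,1-\lambda_m)\,t^{1-\mu_m-\lambda_m}\bar X(t)$, by the Beta integral and the scaling $a=r\sigma$. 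Since $\mu_m+\lambda_m<1$, the exponent $\delta_m:=1-\mu_m-\lambda_m$ is strictly positive.

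The first step is therefore a local statement: with $\delta:=\min_m\delta_m>0$ and $C_*:=\sum_m C_mB(1-\mu_m,1-\lambda_m)$, we get $\bar X(t)\leqslant C_*t^{\delta}\bar X(t)$ for $t\leqslant 1$. Choosing $t_0>0$ with $C_*t_0^\delta\leqslant 1/2$ gives $\bar X(t_0)\leqslant\frac12\bar X(t_0)$, so $X\equiv0$ on $(0,t_0]$. The boundedness of $X$ is essential here, so that $\bar X(t_0)<\infty$ and it can be absorbed.

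The second step is the continuation, which I expect to be the only real obstacle, because the kernel $a^{-\lambda_m}$ is anchored at $0$ and the smallness came from the whole time $t^\delta$ rather than a time increment. Suppose $X=0$ on $(0,\tau]$ for some $\tau\geqslant t_0$. For $t\in(\tau,\tau+h]$, the integrals only see $a\in(\tau,r)$. There $a^{-\lambda_m}\leqslant t_0^{-\lambda_m}$, and $\int_\tau^r(r-a)^{-\mu_m}\,\mathrm{d}a\leqslant h^{1-\mu_m}/(1-\mu_m)$. Hence $\sup_{\tau<r\leqslant\tau+h}X(r)\leqslant \bigl(\sum_m C_m t_0^{-\lambda_m}h^{1-\mu_m}/(1-\mu_m)\bigr)\sup_{\tau<r\leqslant\tau+h}X(r)$. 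The bracket is at most $1/2$ once $h$ is chosen small, and $h$ depends only on $t_0$, the $C_m$ and the $\mu_m$, not on $\tau$. So $X$ vanishes on $(\tau,\tau+h]$.

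After finitely many steps of fixed length $h$ we reach $T$, so $X\equiv0$ on $(0,T]$; at $t=0$ there is nothing to prove, or it follows by the same convention. Alternatively, one can run a single weighted argument with the norm $\sup_t e^{-Nt}\bar X(t)$, or iterate the Volterra operator $n$ times as in Henry's inequality. The stepping argument above, however, avoids any kernel-composition computations.
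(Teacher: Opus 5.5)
Your proof is correct and is essentially the paper's argument: the same monotone envelope $\bar X$ (which inherits the inequality because $\mathcal V_{\mu,\lambda}X$ is nondecreasing), the Beta-function scaling bound $B(1-\mu_m,1-\lambda_m)\,t^{1-\mu_m-\lambda_m}\bar X(t)$ to absorb on a short initial interval, and then continuation from the right endpoint of the zero set. Your continuation step makes explicit what the paper compresses into ``repeating the same argument from the right endpoint'': you bound $a^{-\lambda_m}\leqslant t_0^{-\lambda_m}$ and gain $h^{1-\mu_m}$, with $h$ independent of $\tau$. Like the paper, you really obtain $X\equiv0$ only on $(0,T]$, since the hypothesis places no constraint on $X(0)$.
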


\begin{proof}
Let \(Y(t):=\sup_{0<s\leqslant t}X(s)\).  For \(0<r\leqslant t\), the change of variables \(a=rb\) gives
\[
	\int_0^r(r-a)^{-\mu_m}a^{-\lambda_m}X(a)\,\mathrm{d}a
	\leqslant \mathrm B(1-\mu_m,1-\lambda_m)r^{1-\mu_m-\lambda_m}Y(t),
\]
where \(\mathrm B(p,q):=\int_0^1(1-b)^{p-1}b^{q-1}\,\mathrm{d}b\) is the beta function.
Hence, for \(t>0\) small enough,
\[
	\sum_m C_m\mathrm B(1-\mu_m,1-\lambda_m)t^{1-\mu_m-\lambda_m}\leqslant\frac12,
\]
and therefore \(Y(t)\leqslant Y(t)/2\).  Thus \(X=0\) on $[0,t]$.
Repeating the same argument from the right endpoint of an interval on which \(X\) is already zero gives the conclusion on the whole of \([0,T]\).
\end{proof}

\end{document}